\journal{Journal of Computational and Applied Mathematics}
\newtheorem{theorem}{Theorem}
\newtheorem{lemma}[theorem]{Lemma}
\newtheorem{definition}[theorem]{Definition}
\newtheorem{example}[theorem]{Example}
\newtheorem{remark}[theorem]{Remark}
\def\bfp{{\boldsymbol{p}}}
\def\bfq{{\boldsymbol{q}}}
\def\bfx{{\boldsymbol{x}}}
\def\bfu{{\boldsymbol{u}}}
\def\bfv{{\boldsymbol{v}}}
\def\bfw{{\boldsymbol{w}}}
\newcommand\scalemath[2]{\scalebox{#1}{\mbox{\ensuremath{\displaystyle #2}}}}
\begin{document}

\newcounter{num}
    \setcounter{num}{0}

\begin{frontmatter}

\title{{A new method} to detect projective equivalences and symmetries of rational $3D$ curves}


\author[FirstAdress]{U\u{g}ur G\"oz\"utok}\corref{mycorrespondingauthor}
\cortext[mycorrespondingauthor]{Corresponding author}
\ead{ugurgozutok@ktu.edu.tr}

\author[FirstAdress]{H\"usn\"u An{\i}l \c{C}oban}
\ead{hacoban@ktu.edu.tr}

\author[FirstAdress]{Yasemin Sa\u{g}{\i}ro\u{g}lu}
\ead{ysagiroglu@ktu.edu.tr}

\address[FirstAdress]{Department of Mathematics, Karadeniz Technical University, Turkey}

\author[a]{Juan Gerardo Alc\'azar}
\ead{juange.alcazar@uah.es}
\address[a]{Departamento de F\'{\i}sica y Matem\'aticas, Universidad de Alcal\'a,
E-28871 Madrid, Spain}

\begin{abstract}
We present a new approach to detect projective equivalences and symmetries between two rational parametric $3D$ curves properly parametrized. In order to do this, we introduce two {rational functions that behave nicely for} M\"obius transformations, which are the transformations in the parameter space  associated with the projective equivalences between the curves. The M\"obius transformations are found by first computing the gcd of two polynomials built from {these two functions}, and then searching for {a special type of factors, ``M\"obius-like", of this gcd}. The projective equivalences themselves are easily computed from the M\"obius transformations. In particular, and unlike previous approaches, we avoid solving big polynomial systems. The algorithm has been implemented in \citet{maple}, and evidences of its efficiency as well as a comparison with previous approaches are given. 
\end{abstract}

\begin{keyword}
Projective equivalences\sep projective symmetries \sep rational {3D} curves \sep differential invariants
\end{keyword}

\end{frontmatter}


\section{Introduction.}\label{intro}

Detecting projective and affine equivalences implies recognizing whether or not two objects are the same in a certain setup. Also, finding the symmetries of an object is important in order to understand its shape, and also to efficiently visualize and store the information regarding the object. For these reasons, these questions have been treated in fields like Computer Vision, Computer Graphics, Computer Aided Geometric Design and Pattern Recognition. Several studies addressing the problem are, for instance, \citep{Bokeloh2009,Brass20043,Huang19961473,Lebmeir2008707,Lebmeir2009}; a more comprehensive review can be found in \citep{Alcazar201551}.

In recent years several papers \citep{Alcazar2014715,Alcazar2014b199,Alcazar2014a269,Alcazar201551,Alcazar2019775,ALCAZAR2019302,Hauer2019424,BIZZARRI2020101794,Hauer201868,DBLP,JUTTLER2022571} have pursued these problems for rational curves and surfaces, using tools from Algebraic Geometry and Computer Algebra. In the case of curves, the main idea behind these approaches is the fact that projective or affine equivalences between the curves, and symmetries as a particular case, have a corresponding transformation in the parameter domain which must be a M\"obius transformation whenever the curves are properly, i.e. birationally, parametrized. Thus, the usual approach is to compute the M\"obius transformations, and derive the equivalences themselves from there. 

For projective equivalences, the algorithms in \citep{Hauer201868,BIZZARRI2020112438} follow this strategy and compute the M\"obius transformations by solving a polynomial system which is increasingly big as the degree of the curves involved in the computation grows. Solving {this} polynomial system implies using Gr\"obner bases, which results in higher complexity. In this paper we use a different approach following the idea in \citep{Alcazar201551}, where the classical curvature and torsion, two well-known differential invariants, are used to compute the symmetries of a space rational curve. In \citep{Alcazar201551} the M\"obius transformations are derived as special factors of a gcd of two polynomials, computed from the curvature and torsion functions. On one hand, this has the advantage of working with smaller polynomials, since taking the gcd already reduces the degree of the polynomial one has to analyze. On the other hand, one avoids solving polynomial systems by using factoring instead. 

In a similar way, in this paper we present a strategy for 3D space rational curves that also pursues the M\"obius transformations first. However, in order to compute them we introduce two differential invariants, which we call {\it projective curvatures}, that allow us to obtain the M\"obius transformations using an analogous procedure to that in \citep{Alcazar201551}, i.e. using gcd computing and factoring over the reals, without sorting to polynomial system solving. The projective curvatures are {inspired by} ideas from differential invariant theory \citep{MR836734,dolgachev_2003,olver_1995,mansfield_2010}. To this aim, we first present four {rational expressions} that completely characterize projective equivalence but that, however, are not well suited for computation, because they do not 
{have a good behavior with respect to M\"obius transformations; in the terminology of this paper, we express this by saying that they ``do not commute" with M\"obius transformations.} From here, we develope two more {rational expressions}, the {\it projective curvatures}, that do commute with M\"obius transformations, and we characterize projective equivalence between the curves using these curvatures. {In particular, it is this good behavior with respect to M\"obius transformations that allows us to solve the problem by using gcd and factoring.} The experimentation carried out in \citet{maple} shows that our approach is efficient and works better than \citep{Hauer201868,BIZZARRI2020112438} as the degree of the curves grow. 

The structure of the paper is the following. In Section \ref{prelmn} we provide {some background on the problem treated in the paper, as well as some preliminary notions and results to be use later.} The main results behind the algorithm are developed in Section \ref{newmeth}, where we introduce several {rational expressions} to finally derive the projective curvatures, and the theorems relating them to the projective equivalences between the curves. The algorithm itself is provided in Section \ref{sec5}. We present the results of the experimentation carried out in \citet{maple} in Section \ref{Imp}, where a comparison with the results in \citep{Hauer201868,BIZZARRI2020112438} is also given. Finally, we close with our conclusion in Section \ref{sec-conclusion}. Several technical results and technical proofs are deferred to two appendixes, so as to improve the reading of the paper.

\section*{Acknowledgements}

The first three authors are partially supported by the project FAY-2021-9648 of Scientific Research Projects Unit, Karadeniz Technical University. The first author would like to thank TUBITAK (The Scientific and Technological Research Council of Turkey) for their financial supports during his doctorate studies. Juan G. Alc\'azar is supported by the grant PID2020-113192GB-I00 (Mathematical Visualization: Foundations, Algorithms and Applications) from the Spanish MICINN, and is also a member of the Research Group {\sc asynacs} (Ref. {\sc ccee2011/r34}). Juan G. Alc\'azar and U\u{g}ur G\"oz\"utok were also supported by TUBITAK for a short research visit to Yildiz Technical University in Istanbul (Turkey). {The authors are also thankful to the reviewers for their comments, which made it possible to improve the paper with respect to an earlier version.}

\section{Preliminaries.}\label{prelmn}


\subsection{General notions and assumptions.}\label{subsec1}

For the sake of comparability, in general we will follow the notation in \citep{Hauer201868}. Thus, let $\bm{C_1}$ and $\bm{C_2}$ 
be two parametric rational curves embedded in the three real projective space ${{\Bbb P}^3(\mathbb{R})}$. The points $\bm{x}\in{{\Bbb P}^3(\mathbb{R})}$ are represented by $\bm{x}=(x_0,x_1,x_2,x_3)^T$, where the $x_i$ are real numbers and correspond to the \emph{homogeneous} coordinates of $\bm{x}$. In particular, whenever $\lambda\neq 0$, the vectors $\bm{x}$ and $\lambda \bm{x}$ represent the same point in ${{\Bbb P}^3(\mathbb{R})}$. The curves $\bm{C_1}$ and $\bm{C_2}$ are defined by means of parametrizations

\begin{align*}
\bm{p}:{\Bbb P}^1(\mathbb{R})\rightarrow \mathbf{C_1} \subset{{\Bbb P}^3(\mathbb{R})}, &&(t_0,t_1)\rightarrow \bm{p}(t_0,t_1)=(p_0(t_0,t_1),p_1(t_0,t_1),p_2(t_0,t_1),p_3(t_0,t_1)), \\
\bm{q}:{\Bbb P}^1(\mathbb{R})\rightarrow \mathbf{C_2} \subset{{\Bbb P}^3(\mathbb{R})}, &&(t_0,t_1)\rightarrow \bm{q}(t_0,t_1)=(q_0(t_0,t_1),q_1(t_0,t_1),q_2(t_0,t_1),q_3(t_0,t_1)),
\end{align*}
where ${\Bbb P}^1(\mathbb{R})$ denotes the real projective line. {The components of each parametric map} are homogeneous polynomials of degree $n$,

\begin{align*}
p_i(t_0,t_1)=\sum_{j=0}^{n} c_{j,i}t_0^{n-j}t_1^{j} \textit{  and  } q_i(t_0,t_1)=\sum_{j=0}^{n} c'_{j,i}t_0^{n-j}t_1^{j},
\end{align*} 
with $i\in \{0,1,2,3\}$, and $c_{j,i},c'_{j,i}\in {\Bbb R}$. Additionally, we denote 
\begin{align*}
\bm{c_j}=(c_{j,0},c_{j,1},c_{j,2},c_{j,3})^T, \; 
\bm{c'_{j}}=(c'_{j,0},c'_{j,1},c'_{j,2},c'_{j,3})^T,
\end{align*}
which will be referred to as the \emph{coefficient vectors} of the curves. {\color{black} For instance, consider the parametrization 
\begin{equation*}
\bm{p}(t_0,t_1)=\begin{pmatrix}
t_0^4+t_1^4 \\
4t_0^3t_1 \\
-8t_0^2t_1^2 \\
t_0t_1^3-2t_1^4
\end{pmatrix}.
\end{equation*}
The coefficient vectors are $\bm{c_0}=(1,0,0,0)^T$, $\bm{c_1}=(0,4,0,0)^T$, $\bm{c_2}=(0,0,-8,0)^T$, $\bm{c_3}=(0,0,0,1)^T$, $\bm{c_4}=(1,0,0,-2)^T$. Hence, we can rewrite $\bm{p}$ as
\begin{equation*}
\bm{p}(t_0,t_1)=\begin{pmatrix}
1 & 0 & 0 & 0 & 1 \\
0  & 4 & 0 & 0 & 0 \\
0  & 0 & -8 & 0 & 0 \\
0 & 0& 0 & 1& -2
\end{pmatrix}\begin{pmatrix}
t_0^4 \\
t_0^3t_1\\
t_0^2t_1^2\\
t_0t_1^3\\
t_1^4
\end{pmatrix}.
\end{equation*}
}

Furthermore, we make the following assumptions on $\bm{C_1}$ and $\bm{C_2}$; we will refer later to these hypotheses as \emph{hypotheses (i-iv)}.
\begin{itemize}
\item [(i)] The parametrizations $\bm{p}$ and $\bm{q}$ defining $\bm{C_1}$ and $\bm{C_2}$ are \emph{proper}, i.e. birational, so that almost all points in  $\bm{C_i}$ {are reached by the parametrizations}. It is well-known that every rational curve can be reparametrized to obtain a proper parametrization \cite[see][]{Sendra2008}.
\item [(ii)] The parametrizations $\bm{p}$ and $\bm{q}$ are in reduced form, i.e.,
\begin{equation*}
gcd(p_0(t_0,t_1),p_1(t_0,t_1),p_2(t_0,t_1),p_3(t_0,t_1))
=gcd(q_0(t_0,t_1),q_1(t_0,t_1),q_2(t_0,t_1),q_3(t_0,t_1))=1.
\end{equation*} 
\item [(iii)] Both parametrizations $\bm{p}$ and $\bm{q}$  have the same degree $n$. Notice that since projective transformations preserve the degree, the degree of projectively equivalent curves must be equal. Furthermore, we assume $n\geq 4$. 
\item [(iv)] None of the $\bm{C_i}$ is contained in a hyperplane. Consequently, the matrices $(c_{j,k})$, $(c'_{j,k})$ formed by the coefficient vectors $\bm{c_j}$ and $\bm{c'_{j}}$ have rank $4$ \citep{Hauer201868}.  
\end{itemize}

\begin{remark}\label{isnotzero}
Notice that because of these assumptions, the coefficient vectors $\bm{c_0}$ and $\bm{c'_{0}}$ cannot be identically zero. 
\end{remark}

A \emph{projectivity} is a mapping $f$ defined in {${\Bbb P}^3(\mathbb{R})$} such that 
\begin{align*}
f:{{\Bbb P}^3(\mathbb{R})}\rightarrow {{\Bbb P}^3(\mathbb{R})}:\bm{x}\mapsto f(\bm{x})=M\cdot\bm{x},
\end{align*}
where $M=(m_{ij})_{0\leq i,j\leq 3}$ is a non-singular $4\times 4$ matrix. If $m_{00}\neq 0,\,\, m_{01}=m_{02}=m_{03}=0$, then $f$ is an affine transformation. {Observe that the matrix $M$ is defined up to a nonzero scalar, so that $M,\mu M$ with $\mu\in {\Bbb R}-\{0\}$ define the same projectivity.} Then we have the following definition. 

\begin{definition}\label{def3}
Two curves $\bm{C_1}$ and $\bm{C_2}$ are said to be \emph{projectively equivalent} if there exists a projectivity $f$ such that $f(\bm{C_1})=\bm{C_2}$. A curve $\bm{C}$ has a \emph{projective symmetry} if there exists a non-trivial projectivity $f$ such that $f(\bm{C})=\bm{C}$.
\end{definition}

It is well-known \citep{Sendra2008,Hauer201868,BIZZARRI2020112438} that any two proper parametrizations of a rational curve are related by a linear rational transformation
\begin{equation}\label{eq-moeb}
\varphi:{\Bbb P}^1(\mathbb{R})\to {\Bbb P}^1(\mathbb{R}),\quad (t_0,t_1)\to\varphi(t_0,t_1)=(at_0+bt_1,ct_0+dt_1),
\end{equation}
with $ad-bc\neq 0$. {Notice that $a,b,c,d$ are defined up to a common nonzero scalar factor.} The mapping $\varphi$ is called a \emph{M\"obius transformation}. This fact is essential to prove the following result, which is used in \citep{Hauer201868,BIZZARRI2020112438}.

\begin{theorem}\label{theo0.1}
Two rational curves $\bm{C_1},\bm{C_2}$ properly parametrized by $\bm{p}$ and $\bm{q}$ are projectively equivalent if and only if there exist a non-singular $4\times 4$ matrix $M$ and a M\"obius transformation $\varphi(t_0,t_1)=(at_0+bt_1,ct_0+dt_1)$ with $ad-bc\neq 0$ such that 
\begin{equation}\label{eq0.1}
M\bm{p}={\bm{q}\circ\varphi}.
\end{equation} 
\end{theorem}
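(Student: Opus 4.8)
The plan is to prove both implications of the equivalence separately, using Theorem~\ref{theo0.1}'s companion fact that any two proper parametrizations of the same rational curve differ by a M\"obius transformation as stated in~\eqref{eq-moeb}.

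First I would handle the forward direction. Assume $\bm{C_1}$ and $\bm{C_2}$ are projectively equivalent, so by Definition~\ref{def3} there is a projectivity $f(\bm{x})=M\bm{x}$ with $M$ non-singular and $f(\bm{C_1})=\bm{C_2}$. Then $M\bm{p}$ is a parametrization of $f(\bm{C_1})=\bm{C_2}$. The key observation is that $M\bm{p}$ is in fact a \emph{proper} parametrization of $\bm{C_2}$: since $\bm{p}$ is proper by hypothesis~(i) and $f$ is a bijection of ${\Bbb P}^3(\mathbb{R})$ (being a projectivity, hence invertible), composing a birational parametrization with a projectivity preserves birationality. Now both $M\bm{p}$ and $\bm{q}$ are proper parametrizations of the same curve $\bm{C_2}$, so by the cited fact~\eqref{eq-moeb} there is a M\"obius transformation $\varphi$ with $ad-bc\neq 0$ relating them; writing this out gives precisely $M\bm{p}=\bm{q}\circ\varphi$, which is~\eqref{eq0.1}. (One must be slightly careful that the two parametrizations agree as maps into projective space up to the common scalar ambiguity noted after the definition of a projectivity, but since both sides are homogeneous of the same degree $n$, the relation holds on the nose after absorbing scalars.)

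For the converse, suppose there exist a non-singular $M$ and a M\"obius transformation $\varphi$ with $ad-bc\neq0$ satisfying $M\bm{p}=\bm{q}\circ\varphi$. I would argue directly that the projectivity $f(\bm{x})=M\bm{x}$ sends $\bm{C_1}$ to $\bm{C_2}$. Indeed, $f(\bm{C_1})$ is the image of $\bm{C_1}$ under $f$, parametrized by $M\bm{p}$; but $M\bm{p}=\bm{q}\circ\varphi$ traces out the same set of points as $\bm{q}$ does, because $\varphi$ is a bijection of ${\Bbb P}^1(\mathbb{R})$ (its determinant $ad-bc$ is nonzero). Hence $f(\bm{C_1})=\bm{C_2}$, and by Definition~\ref{def3} the curves are projectively equivalent.

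The main obstacle I expect is the forward direction, specifically the verification that $M\bm{p}$ is a \emph{proper} parametrization so that the M\"obius fact~\eqref{eq-moeb} can legitimately be invoked. This requires knowing that properness (birationality onto the image) is invariant under composition with a projectivity on the target side; this is essentially standard but deserves an explicit remark. A secondary, more technical point is bookkeeping the projective scalar ambiguities: $M$ is defined only up to a scalar $\mu$ and the M\"obius coefficients $a,b,c,d$ only up to a common scalar, so the equality in~\eqref{eq0.1} should be understood in the appropriate projective sense, with homogeneity of degree $n$ guaranteeing the scalars can be matched consistently across all four components simultaneously.
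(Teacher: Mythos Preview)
Your proposal is correct and follows precisely the approach the paper itself indicates: the paper does not give a full proof of Theorem~\ref{theo0.1} but states it as a known result used in \cite{Hauer201868,BIZZARRI2020112438}, explicitly remarking that the M\"obius fact~\eqref{eq-moeb} ``is essential to prove'' it. Your argument fills in exactly that sketch, and your care about the scalar ambiguity is handled in the paper by Remark~\ref{lambd}.
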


\begin{color}{black}
\begin{remark}\label{lambd}
In general, projective equivalence implies that $M\bm{p}=\lambda(\bm{q}\circ \varphi)$ with $\lambda\neq 0$. However, since we are working in a projective setting, we can always safely assume that $\lambda=1$; this is the case in Eq. \eqref{eq0.1}.
\end{remark}
\end{color}

\subsection{{A notion of invariance.}}\label{subsec2.2}

{\color{black}
We are interested in building rational expressions in terms of the components of the parametrizations that are invariant under projectivities, and that can help us recognize when two given rational curves are projectively equivalent. In more detail, let $\bm{u}=\bm{u}(t_0,t_1)$, $\bm{v}=\bm{v}(t_0,t_1)$ be two homogeneous parametrizations in ${\Bbb P}^3({\Bbb R})$, defining two curves $\bm{D_1},\bm{D_2}$, and assume that $M \bm{u}=\bm{v}$ with $\mbox{det}(M)\neq 0$. Therefore, according to Definition \ref{def3} the curves $\bm{D_1},\bm{D_2}$ are projectively equivalent. We want to build expressions $I_i$, which are rational in the components of $\bfu$ and its derivatives, so that $I_i(\bm{u})=I_i(M\bm{u})$ for all non-singular $4\times 4$ matrices $M$. This helps to recognize whether or not $\bm{u},\bm{v}$ satisfy that $M \bm{u}=\bm{v}$ for some $M$, since $I_i(\bm{u})=I_i(\bm{v})$ are necessary conditions for this. We say that $I_i$ is \emph{invariant} under projectivities, and we will often refer to $I_i$ simply as an \emph{invariant}. Since the $I_i$ are rational functions of $\bfu$ and its derivatives, when considering rational parametrizations $\bfu$, the $I_i$ are, in turn, rational functions. 

In order to build expressions of this type, we will use two ingredients. The first one has to do with determinants. Let $\bfw_i\in {\Bbb R}^n$ for $i=1,\ldots,n$, and let $\Vert\bfw_1\,\ldots\,\bfw_n\Vert$ denote the determinant of the vectors $\bfw_i$; we will keep this notation in the rest of the paper. Let $A\in {\mathcal M}_{n\times n}({\Bbb R})$ be a matrix whose determinant $\mbox{det}(A)$ is nonzero. Then we recall that
\begin{equation}\label{ref-det}
\Vert A\bfw_1\, \ldots \, A\bfw_n\Vert=\mbox{det}(A) \Vert\bfw_1\, \ldots\,\bfw_n\Vert.
\end{equation}

The second ingredient is differentiation. Let us denote, and we will also keep this notation in the rest of the paper,
\begin{equation}\label{notation}
\bm{u}_{t_0^k t_1^l}=\dfrac{\partial^{k+l} \bm{u}}{\partial t_0^k\partial t_1^l}(t_0,t_1),
\end{equation}
and analogously for $\bfv$. By repeatedly differentiating the equality $M \bm{u}=\bm{v}$ with respect to $t_0,t_1$, we get that 
\begin{equation}\label{ing2}
M \bm{u}_{t_0^k t_1^l}= \bm{v}_{t_0^k t_1^l}
\end{equation}
for any choice $(k,l)\in ({\Bbb Z}^+\cup \{0\})\times ({\Bbb Z}^+\cup \{0\})$. 

Now let us see how to use these two ingredients to build rational expressions with the desired property. We will show it with one example. Let us pick four different elements $(k,l)\in ({\Bbb Z}^+\cup \{0\})\times ({\Bbb Z}^+\cup \{0\})$, say $\{(4,0), (0,1),(2,0),(3,0)\}$, which according to Eq. \eqref{notation} correspond to the derivatives $\bm{u}_{t_0^4}
, \bm{u}_{t_1}, \bm{u}_{t_0^2}, \bm{u}_{t_0^3}$. Because of Eq. \eqref{ref-det}, we get that 
\begin{equation}\label{ba1}
\Vert M\bm{u}_{t_0^4}\, M \bm{u}_{t_1}\, M\bm{u}_{t_0^2}\, M\bm{u}_{t_0^3}\Vert=\mbox{det}(M) \Vert\bm{u}_{t_0^4}
\,\bm{u}_{t_1}\, \bm{u}_{t_0^2}\, \bm{u}_{t_0^3}\Vert.
\end{equation}
Taking Eq. \eqref{ing2} into account, 
\begin{equation}\label{ba2}
\Vert M\bm{u}_{t_0^4}\, M \bm{u}_{t_1}\, M\bm{u}_{t_0^2}\, M\bm{u}_{t_0^3}\Vert= \Vert\bm{v}_{t_0^4}
\, \bm{v}_{t_1}\, \bm{v}_{t_0^2}\, \bm{v}_{t_0^3}\Vert.
\end{equation}
Therefore, from Eq. \eqref{ba1} and Eq. \eqref{ba2}, we get that 
\begin{equation}\label{ba3}
\mbox{det}(M)\Vert\bm{u}_{t_0^4}
\, \bm{u}_{t_1}\, \bm{u}_{t_0^2}\, \bm{u}_{t_0^3}\Vert=\Vert\bm{v}_{t_0^4}
\, \bm{v}_{t_1}\, \bm{v}_{t_0^2}\, \bm{v}_{t_0^3}\Vert.
\end{equation}

Any other choice of four different elements $(k,l)\in ({\Bbb Z}^+\cup \{0\})\times ({\Bbb Z}^+\cup \{0\})$ provides a relationship similar to Eq. \eqref{ba3}. For instance, if we pick $\{(1,0),(0,1),(2,0),(3,0)\}$ then we get 
\begin{equation}\label{ba4}
\mbox{det}(M)\Vert\bm{u}_{t_0}
\, \bm{u}_{t_1}\, \bm{u}_{t_0^2}\, \bm{u}_{t_0^3}\Vert= \Vert\bm{v}_{t_0}
\, \bm{v}_{t_1}\, \bm{v}_{t_0^2}\, \bm{v}_{t_0^3}\Vert.
\end{equation}

Finally, if we divide Eq. \eqref{ba3} and Eq. \eqref{ba4}, assuming that the determinants in the denominators do not vanish, $\mbox{det}(M)$ is canceled out and we obtain 
\begin{equation}\label{thezero}
\dfrac{\Vert \bm{u}_{t_0^4}
\, \bm{u}_{t_1}\, \bm{u}_{t_0^2}\, \bm{u}_{t_0^3} \Vert}{\Vert\bfu_{t_0}\, \bfu_{t_1}\, \bfu_{t_0^2}\, \bfu_{t_0^3}\Vert}=\dfrac{\Vert \bm{v}_{t_0^4}
\, \bm{v}_{t_1}\, \bm{v}_{t_0^2}\, \bm{v}_{t_0^3} \Vert}{\Vert\bfv_{t_0}\, \bfv_{t_1}\, \bfv_{t_0^2}\, \bfv_{t_0^3}\Vert}
\end{equation}

Thus, the rational expression at the left of Eq. \eqref{thezero}, which we denote as 
\begin{equation}\label{thefirst}
I_1(\bfu)=\dfrac{\Vert \bm{u}_{t_0^4}
\, \bm{u}_{t_1}\, \bm{u}_{t_0^2}\, \bm{u}_{t_0^3} \Vert}{\Vert\bfu_{t_0}\, \bfu_{t_1}\, \bfu_{t_0^2}\, \bfu_{t_0^3}\Vert},
\end{equation}
is invariant under projectivities, because $I_i(\bfu)=I_i(M\bfu)$ for any non-singular $4\times 4$ matrix $M$; hence, $M\bfu=\bfv$ implies that $I_1(\bfu)=I_1(\bfv)$. Other choices for $(k,l)$ give rise to other invariants, some of which we will be using in the next sections. 

If we have two parametrizations $\bfu,\bfv$ and several invariants $I_i$, $i=1,\ldots,m$, $I_i(\bfu)=I_i(\bfv)$ are necessary conditions for the equality $M\bfu=\bfv$ to hold. In the next section we will see that a good choice of invariants can ensure that these conditions are also sufficient. 

\begin{remark} Certainly, from Theorem \ref{theo0.1} we observe that what we want to recognize is not a relationship like $M\bfu=\bfv$, which could be addressed directly, but $M\bfu=\bfv\circ\varphi$, with $\varphi$ an unknown M\"obius transformation. It is the presence of $\varphi$ that makes the problem more difficult. But even though other tools will also be required, as we will see in the next section the notion of invariant addressed here will be crucial. 
\end{remark}
}
  


\section{A new method to detect projective equivalence.}\label{newmeth}

In this section we consider two curves $\bm{C_1},\bm{C_2}$, defined by homogeneous parametrizations $\bm{p}$ and $\bm{q}$, satisfying the assumptions in Subsection \ref{subsec1}.

\subsection{Overall strategy.}\label{subsec3.1}

As in previous approaches, our strategy takes advantage of Theorem \ref{theo0.1}, and proceeds by first computing the M\"obius transformation $\varphi$ in the statement of Theorem \ref{theo0.1}. If no such transformation is found, the curves $\bm{C_1},\bm{C_2}$ are not projectively equivalent. Otherwise, the matrix $M$ defining the projectivity between $\bm{C_1},\bm{C_2}$ is determined from $\varphi$; we will see that in our case this just amounts to performing a matrix multiplication.  

The main ideas in our approach, which we will develop in order later, are the following. 

\begin{itemize}
\item [(A)] {\it Invariant functions.} We start with four {rational expressions in terms of the components of a parametrization and its derivatives}, that we denote by $I_i$, $i\in \{1,2,3,4\}$. These {expressions} are defined as quotients of certain determinants, {and are invariant under projectivities in the sense of Subsection \ref{subsec2.2}; their 
invariance can be proven with the same strategy used in Subsection \ref{subsec2.2}, and follows from the property for determinants recalled in Eq. \eqref{ref-det}.} From Theorem \ref{theo0.1}, if $\bm{p},\bm{q}$ correspond to projectively equivalent curves then $M\bm{p}=\bm{q}\circ\varphi$, with $\varphi$ a M\"obius transformation. {Then, using the notion of invariance provided in Subsection \ref{subsec2.2}}, we have 
\begin{equation}\label{neweq}
I_i(\bm{p})=I_i(\bm{q}\circ \varphi)
\end{equation}
for $i\in \{1,2,3,4\}$. These are necessary conditions for $\bm{C_1},\bm{C_2}$ to be projectively equivalent, and will be revealed to be also sufficient. 

The equations that stem from Eq. \eqref{neweq} would give rise to a polynomial system in the parameters of $\varphi$. However, this system has a high order, and therefore solving it implies a high computational cost that we want to avoid. 

\item [(B)] {\it Projective curvatures.} In order to avoid solving a big polynomial system, we will derive, from the $I_i$, two more rational expressions $\kappa_1,\kappa_2$, {also invariant in the sense of Subsection \ref{subsec2.2}}, that we will refer to as \emph{projective curvatures}. Since $\kappa_1,\kappa_2$ are also invariants, $\kappa_1,\kappa_2$ do satisfy that
\begin{equation}\label{neweq2}
\kappa_i(\bm{p})=\kappa_i(\bm{q}\circ \varphi)
\end{equation}
for $i=1,2$. But the advantage of the $\kappa_i$ is that while in general $I_i(\bm{q}\circ \varphi)\neq I_i(\bm{q})\circ \varphi$, the $\kappa_i$ do satisfy $\kappa_i(\bm{q}\circ \varphi)=\kappa_i(\bm{q})\circ \varphi${; because of this, we say that the $\kappa_i$ {\it commute} with M\"obius transformations, while the $I_i$ do not}. By taking together the two relationships in Eq. \eqref{neweq2} for $i=1,2$ we can find the whole $\varphi$ as a special quadratic factor of the gcd of two polynomials built from the $\kappa_i$. Thus, to compute $\varphi$ we just need {to employ} gcd computing and factoring, and we do not need to solve any polynomial system. This idea was inspired by the strategy in \citet{Alcazar201551} to compute the symmetries of a rational space curve, where the classical curvature and torsion are used in a similar way. {The term {\it projective curvatures} refers to the fact that the $\kappa_i$ play here a role similar to the role played by the classical curvature and torsion to identify a curve up to rigid motions.}

\item [(C)] {\it Projective equivalences.} Once $\varphi$ is obtained, the nonsingular matrix $M$ defining the projective equivalence can be computed. This just requires performing matrix multiplications involving the parametrizations $\bfp$ and $\bfq\circ \varphi$. 
\end{itemize}

\subsection{{Invariants}.}\label{subsec3.2}

In the rest of the paper, we will use the notation $\Vert\bfw_1\,\ldots\,\bfw_n\Vert$ for the determinant of $n$ vectors $\bfw_i\in {\Bbb R}^n$, and the notation $[\bfw_1\,\ldots\,\bfw_n]$ for the $n\times n$ matrix whose columns are the $\bfw_i$. Additionally, we will use the notation for partial derivatives introduced in Eq. \eqref{notation}. 

In order to motivate {the rational expressions we are going to introduce}, we consider first two homogeneous parametrizations $\bfu(t_0,t_1),\bfv(t_0,t_1)$ of curves of degree $n$ in ${{\Bbb P}^3(\mathbb{R})}$ defining two projectively equivalent curves, such that 
\begin{equation}\label{Meq}
M\bfu(t_0,t_1)=\bfv(t_0,t_1)
\end{equation}
where $M$ represents a projectivity; notice that because of Theorem \ref{theo0.1}, what we are pursuing is exactly Eq. \eqref{Meq}, with $\bfu:=\bfp$, and $\bfv:=\bfq\circ \varphi$ (where $\varphi$ is unknown). Now let $D(\bfu)(t_0,t_1),D(\bfv)(t_0,t_1)$ be the matrices defined as
\begin{equation}\label{Dmatrices}
D(\bfu)=[ \bfu_{t_0}\, \bfu_{t_1}\, \bfu_{t_0^2}\, \bfu_{t_0^3}],\quad D(\bfv)=[ \bfv_{t_0}\, \bfv_{t_1}\, \bfv_{t_0^2}\, \bfv_{t_0^3}].
\end{equation}
Because of Eq. \eqref{Meq}, one can see that $M\cdot D(\bfu)=D(\bfv)$. Assume that the determinant of $D(\bfu)$ is not identically zero, i.e. that $\Vert \bfu_{t_0}\, \bfu_{t_1}\, \bfu_{t_0^2}\, \bfu_{t_0^3}\Vert$ does not vanish identically; later, in Lemma \ref{lem9}, we will see that this holds in our case. Then 
\begin{equation}\label{MD}
M=D(\bfv)(D(\bfu))^{-1}.
\end{equation}
Not let us differentiate $D(\bfv)(D(\bfu))^{-1}$ with respect to $t_k$, $k=0,1$, we get that
\begin{equation}\label{long}
\begin{split}
\dfrac{\partial (D(\bm{v})\cdot (D(\bm{u}))^{-1})}{\partial t_k} &= \dfrac{\partial D(\bm{v})}{\partial t_k}\cdot (D(\bm{u}))^{-1} +D(\bm{v})\cdot \dfrac{\partial (D(\bm{u}))^{-1}}{\partial t_k} \\
& =\dfrac{\partial D(\bm{v})}{\partial t_k}\cdot (D(\bm{u}))^{-1}-D(\bm{v})(D(\bm{u}))^{-1}\cdot \dfrac{\partial D(\bm{u})}{\partial t_k}\cdot (D(\bm{u}))^{-1} \\
& =D(\bm{v})\cdot \left((D(\bm{v}))^{-1}\cdot\dfrac{\partial D(\bm{v})}{\partial t_k}-(D(\bm{u}))^{-1}\cdot\dfrac{\partial D(\bm{u})}{\partial t_k}\right)\cdot (D(\bm{u}))^{-1}.
\end{split}
\end{equation}
Since $M=D(\bfv)(D(\bfu))^{-1}$, {and $M$ is a constant matrix,} we get that the matrices defined by the derivatives at the left-hand side of the above expression are identically zero, and therefore that 
\begin{equation}\label{eq55}
(D(\bm{u}))^{-1}\cdot\dfrac{\partial D(\bm{u})}{\partial t_k}=(D(\bm{v}))^{-1}\cdot\dfrac{\partial D(\bm{v})}{\partial t_k}
\end{equation}
for $k=0,1$. 

We need a closer look at the matrices $U_k,V_k$, defined as 
\begin{equation}\label{UVk}
U_k=(D(\bm{u}))^{-1}\cdot\dfrac{\partial D(\bm{u})}{\partial t_k},\mbox{ }V_k=(D(\bm{v}))^{-1}\cdot\dfrac{\partial D(\bm{v})}{\partial t_k}.
\end{equation}


\ifnum \value{num}=1 {Notice that, according to Eq. \eqref{eq55}, $U_k=V_k$. Now {performing elementary but lengthy calculations, one can check that (see \cite[Section 3.2.1]{Gozutok2022} for a detailed deduction)},
\small 
\begin{equation}\label{Uk0}
\begin{array}{cc}
U_0=\begin{bmatrix}
0 & \frac{n-1}{t_1} & 0 & \dfrac{\Vert \bm{u}_{t_0^4}
\, \bm{u}_{t_1}\, \bm{u}_{t_0^2}\, \bm{u}_{t_0^3} \Vert}{\Vert\bfu_{t_0}\, \bfu_{t_1}\, \bfu_{t_0^2}\, \bfu_{t_0^3}\Vert} \\
0 & 0 & 0 & \dfrac{\Vert \bm{u}_{t_0} \, \bm{u}_{t_0^4}\, \bm{u}_{t_0^2}\, \bm{u}_{t_0^3} \Vert}{\Vert\bfu_{t_0}\, \bfu_{t_1}\, \bfu_{t_0^2}\, \bfu_{t_0^3}\Vert}\\
1 & -\frac{t_0}{t_1} & 0 & \dfrac{\Vert \bm{u}_{t_0}\, \bm{u}_{t_1}\, \bm{u}_{t_0^4}\, \bm{u}_{t_0^3} \Vert}{\Vert\bfu_{t_0}\, \bfu_{t_1}\, \bfu_{t_0^2}\, \bfu_{t_0^3}\Vert}\\
0 & 0 & 1 & \dfrac{\Vert \bm{u}_{t_0}\, \bm{u}_{t_1}\, \bm{u}_{t_0^2}\, \bm{u}_{t_0^4}\Vert}{\Vert\bfu_{t_0}\, \bfu_{t_1}\, \bfu_{t_0^2}\, \bfu_{t_0^3}\Vert}
\end{bmatrix}, & 
U_1=\begin{bmatrix}
\frac{n-1}{t_1} & -\frac{(n-1)t_0}{t_1^2} & 0 & -\frac{t_0}{t_1} \dfrac{\Vert \bm{u}_{t_0^4}
\, \bm{u}_{t_1}\, \bm{u}_{t_0^2}\, \bm{u}_{t_0^3} \Vert}{\Vert\bfu_{t_0}\, \bfu_{t_1}\, \bfu_{t_0^2}\, \bfu_{t_0^3}\Vert}\\
0 & \frac{n-1}{t_1} & 0 & -\frac{t_0}{t_1}\dfrac{\Vert \bm{u}_{t_0} \, \bm{u}_{t_0^4}\, \bm{u}_{t_0^2}\, \bm{u}_{t_0^3} \Vert}{\Vert\bfu_{t_0}\, \bfu_{t_1}\, \bfu_{t_0^2}\, \bfu_{t_0^3}\Vert}\\
-\frac{t_0}{t_1} & \frac{t_0^2}{t_1^2} & \frac{n-2}{t_1} & -\frac{t_0}{t_1}\dfrac{\Vert \bm{u}_{t_0}\, \bm{u}_{t_1}\, \bm{u}_{t_0^4}\, \bm{u}_{t_0^3} \Vert}{\Vert\bfu_{t_0}\, \bfu_{t_1}\, \bfu_{t_0^2}\, \bfu_{t_0^3}\Vert}\\
0 & 0 & -\frac{t_0}{t_1} & \frac{n-3}{t_1}-\frac{t_0}{t_1}\dfrac{\Vert \bm{u}_{t_0}\, \bm{u}_{t_1}\, \bm{u}_{t_0^2}\, \bm{u}_{t_0^4}\Vert}{\Vert\bfu_{t_0}\, \bfu_{t_1}\, \bfu_{t_0^2}\, \bfu_{t_0^3}\Vert}
\end{bmatrix}
\end{array}
\end{equation}
\normalsize
The expressions for $V_0,V_1$ are the same, after replacing $\bfu$ by $\bfv$. In the fourth column of $U_0$ we observe four quotients of determinants, the first of them being the quotient in Eq. \eqref{thefirst} (see Subsection \ref{subsec2.2}), which also arise in the fourth column of $V_0$. All these quotients {correspond to rational expressions which are invariant under projectivities in the sense of Subsection \ref{subsec2.2}}: this follows from the fact that $U_k=V_k$ for $k=0,1$, but also from the arguments in Subsection \ref{subsec2.2}. These four quotients are the {invariants} that we are going to use.} \else {
\subsubsection{Demonstrating $U_k=V_k$.}

In this subsection, let us show $U_k=V_k$ for $k=0,1$.

First let us show that for all $k=0,1$,
\begin{equation}\label{eq0.3}
(D(\bm{p}))^{-1}\cdot\dfrac{\partial D(\bm{p})}{\partial t_k}=(D(\bm{q}))^{-1}\cdot\dfrac{\partial D(\bm{q})}{\partial t_k},
\end{equation}
where  $(D(\bm{p}))^{-1}$ denotes the inverse matrix of $D(\bm{p})$.

Let $(D(\bm{p}))^{-1}\cdot\dfrac{\partial D(\bm{p})}{\partial t_k}=U_k$ for an unknown matrix $U_k$, $k=0,1$. Then $D(\bm{p})\cdot U_k=\dfrac{\partial D(\bm{p})}{\partial t_k}$. Denote the $j$th column of the matrix $D(\bm{p})$ by $D(\bm{p})_j$, and similarly denote the $j$th column of the matrix $U_k$ by $U_k^j$ for $1\leq j\leq 4$. So we have $8$ systems each of which corresponds to a pair of the values $j$ and $k$, namely
\begin{equation}\label{eq0.4}
D(\bm{p})\cdot U_k^j=\dfrac{\partial D(\bm{p})_j}{\partial t_k},
\end{equation}
for $k=0,1$ and $1\leq j\leq 4$. The system corresponding to each pair is linear in the components of $U_k^j$. On the other hand, since the coefficient matrix $D(\bm{p})$ of each system is non singular ($\Delta(\bm{p})\neq 0$), each system has only one solution. The solution to the systems are
\begin{equation}\label{eq0.5}
U_k^j=\begin{bmatrix}
\dfrac{\lVert \dfrac{\partial D(\bm{p})_j}{\partial t_k}\, \bm{p}_{t_1}\, \bm{p}_{t_0^2}\, \bm{p}_{t_0^3} \rVert}{\Delta(\bm{p})} \\
\dfrac{\lVert \bm{p}_{t_0} \, \dfrac{\partial D(\bm{p})_j}{\partial t_k}\, \bm{p}_{t_0^2}\, \bm{p}_{t_0^3} \rVert}{\Delta(\bm{p})} \\
\dfrac{\lVert \bm{p}_{t_0}\, \bm{p}_{t_1}\, \dfrac{\partial D(\bm{p})_j}{\partial t_k}\, \bm{p}_{t_0^3} \rVert}{\Delta(\bm{p})} \\
\dfrac{\lVert \bm{p}_{t_0}\, \bm{p}_{t_1}\, \bm{p}_{t_0^2}\, \dfrac{\partial D(\bm{p})_j}{\partial t_k} \rVert}{\Delta(\bm{p})} \\
\end{bmatrix}.
\end{equation} 

Using Euler's homogeneous function theorem, we conclude that for $k=0$ and $j<4$,
\begin{equation}\label{eq0.6}
U_0^1=\begin{bmatrix}
0 \\
0 \\
1 \\
0
\end{bmatrix},\,\, U_0^2=\begin{bmatrix}
\frac{n-1}{t_1} \\
0 \\
-\frac{t_0}{t_1} \\
0
\end{bmatrix},\,\, U_0^3=\begin{bmatrix}
0 \\
0\\
0\\
1
\end{bmatrix}
\end{equation}

and for $k=1$ and $j<4$,

\begin{equation}\label{eq0.7}
U_1^1=\begin{bmatrix}
\frac{n-1}{t_1} \\
0 \\
-\frac{t_0}{t_1} \\
0
\end{bmatrix},\,\, U_1^2=\begin{bmatrix}
-\frac{(n-1)t_0}{t_1^2} \\
\frac{n-1}{t_1} \\
\frac{t_0^2}{t_1^2} \\
0
\end{bmatrix},\,\, U_1^3=\begin{bmatrix}
0 \\
0\\
\frac{n-2}{t_1}\\
-\frac{t_0}{t_1}
\end{bmatrix}.
\end{equation}

It is seen that for $k=0,1$ and $j<4$, $U_k^j$ do not depend on $p$. Now for $k=0$ and $j=4$, we have $\dfrac{\partial D(\bm{p})_4}{\partial t_0}=\dfrac{\partial \bm{p}_{t_0^3}}{\partial t_0}=\bm{p}_{t_0^4}$. It is obtained
\begin{equation}\label{eq0.8}
U_0^4=\begin{bmatrix}
\dfrac{\lVert \bm{p}_{t_0^4}
\, \bm{p}_{t_1}\, \bm{p}_{t_0^2}\, \bm{p}_{t_0^3} \rVert}{\Delta(\bm{p})} \\
\dfrac{\lVert \bm{p}_{t_0} \, \bm{p}_{t_0^4}\, \bm{p}_{t_0^2}\, \bm{p}_{t_0^3} \rVert}{\Delta(\bm{p})} \\
\dfrac{\lVert \bm{p}_{t_0}\, \bm{p}_{t_1}\, \bm{p}_{t_0^4}\, \bm{p}_{t_0^3} \rVert}{\Delta(\bm{p})} \\
\dfrac{\lVert \bm{p}_{t_0}\, \bm{p}_{t_1}\, \bm{p}_{t_0^2}\, \bm{p}_{t_0^4} \rVert}{\Delta(\bm{p})} \\
\end{bmatrix}=\begin{bmatrix}
I_1(\bm{p}) \\
I_2(\bm{p})\\
I_3(\bm{p})\\
I_4(\bm{p})
\end{bmatrix}.
\end{equation}
Similarly, for $k=1$ and $j=4$, we have $\dfrac{\partial D(\bm{p})_4}{\partial t_1}=\dfrac{\partial \bm{p}_{t_0^3}}{\partial t_1}=\bm{p}_{t_0^3t_1}=\frac{n-3}{t_1}\bm{p}_{t_0^3}-\dfrac{t_0}{t_1}\bm{p}_{t_0^4}$. It is obtained

\begin{equation}\label{eq0.9}
U_1^4=\begin{bmatrix}
\dfrac{\lVert \frac{n-3}{t_1}\bm{p}_{t_0^3}-\dfrac{t_0}{t_1}\bm{p}_{t_0^4}
\, \bm{p}_{t_1}\, \bm{p}_{t_0^2}\, \bm{p}_{t_0^3} \rVert}{\Delta(\bm{p})} \\
\dfrac{\lVert \bm{p}_{t_0} \, \frac{n-3}{t_1}\bm{p}_{t_0^3}-\dfrac{t_0}{t_1}\bm{p}_{t_0^4}\, \bm{p}_{t_0^2}\, \bm{p}_{t_0^3} \rVert}{\Delta(\bm{p})} \\
\dfrac{\lVert \bm{p}_{t_0}\, \bm{p}_{t_1}\, \frac{n-3}{t_1}\bm{p}_{t_0^3}-\dfrac{t_0}{t_1}\bm{p}_{t_0^4}\, \bm{p}_{t_0^3} \rVert}{\Delta(\bm{p})} \\
\dfrac{\lVert \bm{p}_{t_0}\, \bm{p}_{t_1}\, \bm{p}_{t_0^2}\, \frac{n-3}{t_1}p_{t_0^3}-\dfrac{t_0}{t_1}\bm{p}_{t_0^4} \rVert}{\Delta(\bm{p})} \\
\end{bmatrix}=\begin{bmatrix}
-\frac{t_0}{t_1} I_1(\bm{p}) \\
-\frac{t_0}{t_1} I_2(\bm{p})\\
-\frac{t_0}{t_1} I_3(\bm{p})\\
\frac{n-3}{t_1}-\frac{t_0}{t_1} I_4(\bm{p})
\end{bmatrix}.
\end{equation}

Now let $(D(\bm{q}))^{-1}\cdot\dfrac{\partial D(\bm{q})}{\partial t_k}=V_k$ for an unknown matrix $V_k$, $k=0,1$. Then $D(\bm{q})\cdot V_k=\dfrac{\partial D(\bm{q})}{\partial t_k}$. Denote the $j$th column of the matrix $D(\bm{q})$ by $D(\bm{q})_j$, and similarly denote the $j$th column of the matrix $V_k$ by $V_k^j$ for $1\leq j\leq 4$. Similarly, we again have $8$ systems each of which corresponds to a pair of the values $j$ and $k$ for $q$. For the solutions $V_k^j$ we have $U_k^j=V_k^j$ for all $k$ and $j<4$, since $U_k^j$ and $V_k^j$ do not depend on the parametrizations. In addition, similar operations lead to
\begin{equation}\label{eq0.10}
V_0^4=\begin{bmatrix}
I_1(\bm{q}) \\
I_2(\bm{q})\\
I_3(\bm{q})\\
I_4(\bm{q})
\end{bmatrix},
\end{equation}

and 

\begin{equation}\label{eq0.11}
V_1^4=\begin{bmatrix}
-\frac{t_0}{t_1} I_1(\bm{q}) \\
-\frac{t_0}{t_1} I_2(\bm{q})\\
-\frac{t_0}{t_1} I_3(\bm{q})\\
\frac{n-3}{t_1}-\frac{t_0}{t_1} I_4(\bm{q})
\end{bmatrix}.
\end{equation}

Since $I_i(\bm{p})=I_i(\bm{q})$ for all $1\leq i\leq 4$, $U_k^4=V_k^4$ for all $k$. Therefore $U_k=V_k$ for all $k$.
} \fi

Thus, let us denote 
\begin{align}
&A_1(\bm{u}):=\Vert \bm{u}_{t_0^4}\, \bm{u}_{t_1}\, \bm{u}_{t_0^2}\, \bm{u}_{t_0^3} \Vert,A_2(\bm{u}):=\Vert \bm{u}_{t_0}\, \bm{u}_{t_0^4}\, \bm{u}_{t_0^2}\, \bm{u}_{t_0^3} \Vert, A_3(\bm{u}):=\Vert \bm{u}_{t_0}\, \bm{u}_{t_1}\, \bm{u}_{t_0^4}\, \bm{u}_{t_0^3} \Vert, \nonumber \\
&A_4(\bm{u}):=\Vert \bm{u}_{t_0}\, \bm{u}_{t_1}\, \bm{u}_{t_0^2}\, \bm{u}_{t_0^4} \Vert, \Delta(\bm{u}):=\Vert \bm{u}_{t_0}\, \bm{u}_{t_1}\, \bm{u}_{t_0^2}\, \bm{u}_{t_0^3} \Vert. \label{As}
\end{align}

Then we {have the} following four rational expressions, which are the expressions arising in Eq. \eqref{Uk0}:
\begin{equation}\label{first-inv}
I_1(\bm{u}):=\dfrac{A_1(\bm{u})}{\Delta(\bm{u})},\, I_2(\bm{u}):=\dfrac{A_2(\bm{u})}{\Delta(\bm{u})},\, I_3(\bm{u}):=\dfrac{A_3(\bm{u})}{\Delta(\bm{u})},\, I_4(\bm{u}):=\dfrac{A_4(\bm{u})}{\Delta(\bm{u})}.
\end{equation}

The following lemma, proven in \ref{AppendixA}, guarantees that under our hypotheses the {rational expressions in Eq. \eqref{first-inv} are well defined}. 

\begin{lemma}\label{lem9}
Let $\bm{C}$ in ${{\Bbb P}^3(\mathbb{R})}$ be a rational algebraic curve properly parametrized by $\bm{u}$, satisfying the hypotheses (i-iv) in Subsection \ref{subsec1}. Then $\Delta(\bm{u})$ is not identically zero. 
\end{lemma}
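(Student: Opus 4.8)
The plan is to reduce $\Delta(\bm{u})$ to a Wronskian and then to invoke hypothesis (iv). First I would apply Euler's homogeneous function theorem: since every component of $\bm{u}$ is homogeneous of degree $n$, we have $t_0\bm{u}_{t_0}+t_1\bm{u}_{t_1}=n\bm{u}$, whence $\bm{u}_{t_1}=\tfrac{1}{t_1}\bigl(n\bm{u}-t_0\bm{u}_{t_0}\bigr)$. Substituting this for the second column in the determinant defining $\Delta(\bm{u})$, using multilinearity, and discarding the multiple of the first column $\bm{u}_{t_0}$ (which does not change the determinant), I obtain
\[
\Delta(\bm{u})=\frac{n}{t_1}\,\Vert \bm{u}_{t_0}\,\bm{u}\,\bm{u}_{t_0^2}\,\bm{u}_{t_0^3}\Vert=-\frac{n}{t_1}\,W,\qquad W:=\Vert \bm{u}\,\bm{u}_{t_0}\,\bm{u}_{t_0^2}\,\bm{u}_{t_0^3}\Vert .
\]
Because $n\geq 4$ and $t_1$ is not the zero function, $\Delta(\bm{u})\equiv 0$ if and only if $W\equiv 0$, so it suffices to prove $W\not\equiv 0$.

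Next I would observe that $W$ is exactly the Wronskian of the four scalar components $u_0,u_1,u_2,u_3$ of $\bm{u}$ with respect to $t_0$, regarding $t_1$ as a parameter. Viewing $\mathbb{R}(t_0,t_1)$ as a differential field for the derivation $\partial/\partial t_0$, whose field of constants is $\mathbb{R}(t_1)$, the classical Wronskian criterion (valid in characteristic zero) gives that $W\equiv 0$ if and only if $u_0,u_1,u_2,u_3$ are linearly dependent over $\mathbb{R}(t_1)$. Hence the whole statement reduces to showing that $u_0,u_1,u_2,u_3$ are linearly \emph{independent} over $\mathbb{R}(t_1)$.

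To establish this, I would argue by contradiction: suppose $\sum_{i=0}^{3}\lambda_i(t_1)\,u_i(t_0,t_1)=0$ with $\lambda_i\in\mathbb{R}(t_1)$ not all zero. Writing $u_i=\sum_{j=0}^{n}c_{j,i}\,t_0^{n-j}t_1^{j}$ and collecting the coefficient of each monomial $t_0^{n-j}$, the identity forces $t_1^{j}\sum_{i=0}^{3}c_{j,i}\lambda_i(t_1)=0$, and therefore $\sum_{i=0}^{3}c_{j,i}\lambda_i(t_1)=0$, for every $j=0,\dots,n$. This says that the vector $\bm{\lambda}(t_1)=(\lambda_0,\lambda_1,\lambda_2,\lambda_3)$ annihilates every coefficient vector $\bm{c_j}$. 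By hypothesis (iv) the coefficient vectors span $\mathbb{R}^4$, so evaluating at a generic value of $t_1$ yields $\bm{\lambda}(t_1)=0$, i.e. $\lambda_i\equiv 0$, a contradiction. This gives $W\not\equiv 0$ and hence $\Delta(\bm{u})\not\equiv 0$.

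The reduction and the final rank argument are routine; the step to handle with care is the Wronskian criterion, where it is essential to take the constant field to be $\mathbb{R}(t_1)$ rather than $\mathbb{R}$. What makes the argument close is that, for homogeneous components, linear dependence over the larger field $\mathbb{R}(t_1)$ still collapses---through the expansion in powers of $t_0$---to an orthogonality relation against the constant coefficient vectors $\bm{c_j}$, which is exactly where hypothesis (iv) can be brought to bear. I therefore expect no genuine obstacle, only the need to state the Wronskian criterion over the correct field of constants.
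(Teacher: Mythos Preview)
Your proof is correct and takes a genuinely different route from the paper's. The paper factors $D(\bm{u})=A\cdot T$, where $A$ is the $4\times(n+1)$ coefficient matrix of $\bm{u}$ and $T$ is an explicit $(n+1)\times 4$ matrix of monomial derivatives, and then asserts that since $\mbox{rank}(A)=4$ (hypothesis~(iv)) and $\mbox{rank}(T)=4$ (since $n\geq 4$), the product $A\cdot T$ has full rank, so $\Delta(\bm{u})=\det(A\cdot T)\not\equiv 0$. Your argument instead uses Euler's relation to trade $\bm{u}_{t_1}$ for $\bm{u}$, turning $\Delta(\bm{u})$ into $-\tfrac{n}{t_1}$ times the Wronskian of $u_0,\dots,u_3$ with respect to $t_0$; the classical Wronskian criterion over the constant field $\mathbb{R}(t_1)$ then reduces the claim to linear independence of the $u_i$ over $\mathbb{R}(t_1)$, which you get directly from the rank-4 hypothesis on the coefficient matrix (rank does not drop under the field extension $\mathbb{R}\subset\mathbb{R}(t_1)$). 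Both proofs ultimately hinge on hypothesis~(iv). Your approach has the virtue of invoking a named classical tool in place of the product-of-ranks step, which is not automatic for non-square factors and which the paper states without further justification; conversely, the paper's factorization $D(\bm{u})=A\cdot T$ is structurally clean and makes the dependence on the coefficient matrix completely explicit. A minor polish: in your last step you can avoid the ``generic evaluation'' phrasing by simply noting that a rank-4 real matrix remains rank 4 over $\mathbb{R}(t_1)$, so the homogeneous system $\sum_i c_{j,i}\lambda_i(t_1)=0$ forces $\lambda_i\equiv 0$ directly.
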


\vspace{0.3 cm}
Now let us go back to our curves $\bm{C_1},\bm{C_2}$, defined by homogeneous parametrizations $\bm{p}$ and $\bm{q}$ of degree $n\geq 4$, as in Subsection \ref{subsec1}. We recall that we are assuming that $\bm{p},\bm{q}$ satisfy hypotheses (i-iv) in Subsection \ref{subsec1}. {The fact that the expressions in Eq. \eqref{first-inv} are invariant in the sense of Subsection \ref{subsec2.2} implies that $I_i(\bfp)=I_i(M\bfp)=I_i(\bfq\circ \varphi)$ for $i=1,2,3,4$. Thus, the existence of a M\"obius transformation $\varphi$ such that $I_i(\bfp)=I_i(\bfq\circ \varphi)$ for $i=1,2,3,4$ is a necessary condition for $\bm{C_1},\bm{C_2}$ to be projective equivalent. However, the next theorem proves that in this case, these conditions are also sufficient, which is a consequence of how these invariants were chosen. The intuitive idea is as follows: the conditions $I_i(\bfp)=I_i(\bfq\circ \varphi)$ ensure that Eq. \eqref{UVk}, with $\bfp,\bfq\circ \varphi$ replacing $\bfu,\bfv$, is satisfied. From here, Eq. \eqref{eq55} is also satisfied, and therefore the derivatives at the left-hand side of Eq. \eqref{long} are zero. Thus, Eq. \eqref{MD} must be satisfied by some constant matrix $M$, with $\bfp,\bfq\circ \varphi$ replacing $\bfu,\bfv$. Theorem \ref{theo0.1} makes the rest.}

\begin{theorem}\label{teo29}
Let $\bm{C}_1,\bm{C}_2$ be two rational algebraic curves properly parametrized by $\bm{p},\bm{q}$ satisfying hypotheses $(i-iv)$. Then $\bm{C}_1,\bm{C}_2$ are projectively equivalent if and only if there exists a M\"obius transformation $\varphi$ such that 
\begin{equation}\label{Ipqmob}
I_i(\bfp)=I_i(\bfq\circ \varphi)
\end{equation}
for $i\in\{1,2,3,4\}$. 
\end{theorem}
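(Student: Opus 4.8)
The plan is to prove the two implications in turn; the forward one is essentially already in hand, whereas the converse carries the content. \emph{Necessity:} suppose $\bm{C}_1,\bm{C}_2$ are projectively equivalent. By Theorem \ref{theo0.1} there are a nonsingular matrix $M$ and a M\"obius transformation $\varphi$ with $M\bm{p}=\bm{q}\circ\varphi$. Since each $I_i$ is invariant under projectivities in the sense of Subsection \ref{subsec2.2}, I would just write $I_i(\bm{p})=I_i(M\bm{p})=I_i(\bm{q}\circ\varphi)$ for $i\in\{1,2,3,4\}$, which is Eq. \eqref{Ipqmob}.

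\emph{Sufficiency:} assume a M\"obius transformation $\varphi$ with $I_i(\bm{p})=I_i(\bm{q}\circ\varphi)$ for all $i$, and set $\bm{u}:=\bm{p}$, $\bm{v}:=\bm{q}\circ\varphi$, so the hypothesis reads $I_i(\bm{u})=I_i(\bm{v})$. Because $\varphi$ is an invertible linear substitution on $\mathbb{P}^1(\mathbb{R})$, the map $\bm{v}$ is again a proper degree-$n$ parametrization of $\bm{C}_2$ satisfying hypotheses (i-iv); hence Lemma \ref{lem9} gives $\Delta(\bm{u})\neq 0$ and $\Delta(\bm{v})\neq 0$, so the matrices $D(\bm{u}),D(\bm{v})$ of Eq. \eqref{Dmatrices} are invertible and the quotients $I_i$ are well defined. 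The key structural fact is that the computation of the matrices $U_k,V_k$ of Eq. \eqref{UVk}, carried out in Eqs. \eqref{eq0.6}--\eqref{eq0.11}, shows that the columns $U_k^j,V_k^j$ with $j<4$ do not depend on the parametrization at all, while the fourth columns depend on it only through $I_1,\dots,I_4$. This deduction applies verbatim to any pair $\bm{u},\bm{v}$ with $I_i(\bm{u})=I_i(\bm{v})$ (not to $\bm{q}$ and $\bm{q}\circ\varphi$, for which the $I_i$ need not agree), so the hypothesis forces $U_k=V_k$ for $k=0,1$, i.e. Eq. \eqref{eq55}.

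From here I would run Eq. \eqref{long} in reverse. Setting $N:=D(\bm{v})(D(\bm{u}))^{-1}$, Eq. \eqref{long} expresses $\partial N/\partial t_k$ through $V_k-U_k$, which now vanishes, so $N$ is a constant matrix $M$ with $M\,D(\bm{u})=D(\bm{v})$. Comparing first and second columns gives $M\bm{u}_{t_0}=\bm{v}_{t_0}$ and $M\bm{u}_{t_1}=\bm{v}_{t_1}$. The decisive point is to recover $M\bm{u}=\bm{v}$ from these two derivative identities: since $\bm{u}$ and $\bm{v}=\bm{q}\circ\varphi$ are both homogeneous of degree $n$, Euler's theorem gives $n\bm{u}=t_0\bm{u}_{t_0}+t_1\bm{u}_{t_1}$ together with the analogous relation for $\bm{v}$, whence $nM\bm{u}=t_0\bm{v}_{t_0}+t_1\bm{v}_{t_1}=n\bm{v}$, that is $M\bm{p}=\bm{q}\circ\varphi$. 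Finally $\det(M)\,\Delta(\bm{u})=\Delta(\bm{v})$ with both determinants not identically zero forces $\det(M)\neq 0$, and Theorem \ref{theo0.1} delivers the projective equivalence.

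The main obstacle is this last recovery step of the converse, passing from $M\bm{u}_{t_k}=\bm{v}_{t_k}$ back to $M\bm{u}=\bm{v}$: it is exactly here that the homogeneity of the parametrizations, through Euler's theorem, is indispensable, and it explains why the first two columns of $D(\bm{u})$ are chosen to be $\bm{u}_{t_0}$ and $\bm{u}_{t_1}$ --- precisely the partials that reconstruct $\bm{u}$ via Euler's relation. The supporting claim $U_k=V_k$ rests on the explicit form of the connection matrices, whose verification (reducing higher partials such as $\bm{u}_{t_0^3t_1}$ by Euler's theorem) is the lengthier but routine ingredient, with Lemma \ref{lem9} used throughout to keep the quotients and inverse matrices well defined.
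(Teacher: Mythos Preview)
Your proof is correct and follows essentially the same route as the paper: both directions match, with the converse proceeding by setting $\bm{u}=\bm{p}$, $\bm{v}=\bm{q}\circ\varphi$, using the explicit form of $U_k,V_k$ to force $U_k=V_k$, concluding via Eq.~\eqref{long} that $D(\bm{v})(D(\bm{u}))^{-1}$ is constant, and then recovering $M\bm{u}=\bm{v}$ from $M\bm{u}_{t_k}=\bm{v}_{t_k}$ through Euler's homogeneous function theorem. You add a couple of details the paper leaves implicit---the explicit justification that $\det(M)\neq 0$ via $\det(M)\,\Delta(\bm{u})=\Delta(\bm{v})$, and the remark that $\bm{q}\circ\varphi$ still satisfies hypotheses (i--iv) so Lemma~\ref{lem9} applies---which only strengthen the write-up.
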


\begin{proof} The implication $(\Rightarrow)$ follows from Theorem \ref{theo0.1} and the discussion at the beginning of this subsection. So let us focus on $(\Leftarrow)$. Let $\bfu:=\bfp$, $\bfv:=\bfq\circ \varphi$. Since by hypothesis $I_i(\bfu)=I_i(\bfv)$ for $i=1,2,3,4$, taking Eq. \eqref{Uk0} into account we have $U_k=V_k$ for $k=0,1$, so Eq. \eqref{eq55} holds for $k=0,1$; notice that since the determinants of $D(\bfu), D(\bfv)$ are, precisely, $\Delta(\bfu),\Delta(\bfv)$, by Lemma \ref{lem9} the inverses $D(\bfu)^{-1}, D(\bfv)^{-1}$ exist. Therefore, the matrix $D(\bm{v})\cdot (D(\bm{u}))^{-1}$ is a constant nonsingular matrix $M$. Thus, $M\cdot D(\bm{u})=D(\bm{v})$, so $M \bm{u}_{t_0}=\bm{v}_{t_0}$ and $M \bm{u}_{t_1}=\bm{v}_{t_1}$. Using Euler's Homogeneous Function Theorem, we have 
\begin{equation*}
n\bm{v}=t_0\bm{v}_{t_0}+t_1\bm{v}_{t_1}=t_0M \bm{v}_{t_0}+t_1M \bm{v}_{t_1}=M (t_0\bm{u}_{t_0}+t_1\bm{u}_{t_1})=nM \bm{u},
\end{equation*}
so $M \bm{u}=\bm{v}$. 
\end{proof}

The relationships in Eq. \eqref{Ipqmob} lead to a polynomial system in the parameters of the M\"obius transformation $\varphi${, which we might try to solve to find $\varphi$}. However, this system has a high degree. Because of this, we will derive other {rational expressions, also invariant,} that we call \emph{projective curvatures} {and that will allow us to solve our problem without using polynomial system solving}. This is done in the next subsection. 

\subsection{Projective curvatures.} \label{subsec-proj-curv}

A first question when examining the relationships in Eq. \eqref{Ipqmob} is how the $I_i(\bfq)$ change when $\bfq$ is composed with $\varphi$. Writing $\varphi(t_0,t_1)=(at_0+bt_1,ct_0+dt_1)=(u,v)$, calling $\delta=ad-bc\neq 0$ and using the Chain Rule, we get that

\begin{align}
v^4 I_1(\bm{q}\circ\varphi) &=c^3(n-1)(n-2)(n-3)(3v+dt_1)+c^2(n-1)(n-2)\delta t_1(2v+dt_1)I_4(\bm{q})\circ\varphi\nonumber \\
& -c(n-1)\delta ^2t_1^2(v+dt_1)I_3(\bm{q})\circ\varphi+\delta ^3t_1^4(dI_1(\bm{q})\circ\varphi-bI_2(\bm{q})\circ\varphi)\label{eq17} \\ 
v^4I_2(\bm{q}\circ\varphi) &=-c^4(n-1)(n-2)(n-3)t_1-c^3(n-1)(n-2)\delta t_1^2I_4(\bm{q})\circ\varphi\nonumber \\
& +c^2(n-1)\delta ^2t_1^3I_3(\bm{q})\circ\varphi+\delta ^3t_1^4(aI_2(\bm{q})\circ\varphi-cI_1(\bm{q})\circ\varphi)\label{eq18} \\ 
v^2I_3(\bm{q}\circ\varphi) &=-6c^2(n-2)(n-3)-3c(n-2)\delta t_1I_4(\bm{q})\circ\varphi+\delta ^2t_1^2I_3(\bm{q})\circ\varphi\label{eq19} \\
vI_4(\bm{q}\circ\varphi) &=4c(n-3)+\delta t_1I_4(\bm{q})\circ\varphi \label{eq20}.
\end{align}

From these expressions we see that the $I_i$ do not commute with $\varphi$, i.e. in general $I_i(\bfq\circ \varphi)\neq I_i(\bfq)\circ \varphi$. We aim to find {rational functions} that do commute with $\varphi$. In order to do that, we substitute the equations \eqref{eq17}-\eqref{eq20} into the relationships in Eq. \eqref{Ipqmob}, and we get
\begin{align}
v^4 I_1(\bm{p})(t_0,t_1) &=c^3(n-1)(n-2)(n-3)(3v+dt_1)+c^2(n-1)(n-2)\delta t_1(2v+dt_1)I_4(\bm{q}(u,v)\nonumber \\
& -c(n-1)\delta ^2t_1^2(v+dt_1)I_3(\bm{q})(u,v)+\delta ^3t_1^4(dI_1(\bm{q})(u,v)-bI_2(\bm{q})(u,v))\label{eq23} \\ 
v^4I_2(\bm{p})(t_0,t_1) &=-c^4(n-1)(n-2)(n-3)t_1-c^3(n-1)(n-2)\delta t_1^2I_4(\bm{q})(u,v)\nonumber \\
& +c^2(n-1)\delta ^2t_1^3I_3(\bm{q})(u,v)+\delta ^3t_1^4(aI_2(\bm{q})(u,v)-cI_1(\bm{q})(u,v))\label{eq24} \\ 
v^2I_3(\bm{p})(t_0,t_1) &=-6c^2(n-2)(n-3)-3c(n-2)\delta t_1I_4(\bm{q})(u,v)+\delta ^2t_1^2I_3(\bm{q})(u,v)\label{eq25} \\
vI_4(\bm{p})(t_0,t_1) &=4c(n-3)+\delta t_1I_4(\bm{q})(u,v) \label{eq26},
\end{align}
{where we have performed the substitution $u:=at_0+bt_1$, $v:=ct_0+dt_1$ for the components of the M\"obius transformation $\varphi$ (see Eq. \eqref{eq-moeb}). At this point $u,v$ are considered as independent variables, i.e. we forget about the dependence between $u,v$ and $t_0,t_1$.}

{Next, we} want to eliminate the parameters $a,b,c,d$ from these equations, something that we can {translate} in terms of polynomial ideals. Indeed, let us write $J_i=I_i(\bm{q})(u,v)$ for $i\in \{1,2,3,4\}$, and let us denote the $I_i(\bm{p})$ by just $I_i$. Then after clearing denominators, the equations \eqref{eq23}-\eqref{eq26} generate a polynomial ideal ${\mathcal I}$ of 
\[
{\Bbb R}[a,b,c,d,t_1,v,I_1,I_2,I_3,I_4,J_1,J_2,J_3,J_4].
\]
Eliminating $a,b,c,d$ from equations \eqref{eq23}-\eqref{eq26} amounts to finding elements in the elimination ideal 
\[
{\mathcal I}^\star={\mathcal I}\cap{\Bbb R}[t_1,v,I_1,I_2,I_3,I_4,J_1,J_2,J_3,J_4].
\]


\ifnum \value{num}=1 {
In our case, this can be done by hand, without using Gr\"obner bases; the process consists of several easy, but lengthy, substitutions and manipulations \cite[see][Section 3.3.1]{Gozutok2022}. Eventually we get the expressions} 
\else {
\subsubsection{Eliminating variables.}

In our case, this can be done by hand, without using Gr\"obner bases; the process consists of several easy, but lengthy, following substitutions and manipulations.

Now we are ready to eliminate the coefficients $a,b,c,d$ from the above system to find a system such that $\left\lbrace k_1(\bm{p})=k_1(\bm{q})(u,v),k_1(\bm{p})=k_1(\bm{q})(u,v)\right\rbrace$. We first try to eliminate $a,b,d$ from \eqref{eq23} and \eqref{eq24}, then multiplying \eqref{eq23} by $t_0$ and \eqref{eq24} by $-t_1$ and summing the results, we have

\begin{align}
v^4I_0(\bm{p})(t_0,t_1)&=4c^3(n-1)(n-2)(n-3)t_1v+3c^2(n-1)(n-2)\delta t_1^2vI_4(\bm{q})(u,v) \nonumber\\
&-2c(n-1)\delta ^2t_1^3vI_3(\bm{q})(u,v)+\delta ^3t_1^4I_0(\bm{q})(u,v), \label{eq32}
\end{align}
where $I_0(\bm{p})(t_0,t_1)=t_1I_1(\bm{p})(t_0,t_1)-t_0I_2(\bm{p})(t_0,t_1)$.

Again to eliminate $a$ from \eqref{eq24}, we use $av-cu=\delta$. Substituting $a=\dfrac{\delta +cu}{v}$ in \eqref{eq24}, it is obtained

\begin{align}
v^5I_2(\bm{p})(t_0,t_1)&=-c^4(n-1)(n-2)(n-3)t_1v-c^3(n-1)(n-2)\delta t_1^2vI_4(\bm{q})(u,v)\nonumber \\
&+c^2(n-1)\delta ^2t_1^3vI_3(\bm{q})(u,v)-c\delta^3t_1^4I_0(\bm{q})(u,v)+\delta^4t_1^5I_2(\bm{q})(u,v).\label{eq33}
\end{align}

We know that the coefficients of the Möbius transformation satisfy $\delta =ad-bc\neq 0$. Then there is a number $s\neq 0$ such that $\delta s=1$. Now we use this in the equations \eqref{eq23}-\eqref{eq26} in order to eliminate the parameters $\delta ,c$. Thus multiplying the equations \eqref{eq26}, \eqref{eq25}, \eqref{eq32}, \eqref{eq33} by $s$, $s^2$, $s^3$, $s^4$, respectively, we have

\begin{align}
svI_4(\bm{p})(t_0,t_1) &=4(cs)(n-3)+t_1I_4(\bm{q})(u,v) \label{eq34}\\
s^2v^2I_3(\bm{p})(t_0,t_1) &=-6(cs)^2(n-2)(n-3)-3(cs)(n-2) t_1I_4(\bm{q})(u,v)+t_1^2I_3(\bm{q})(u,v)\label{eq35} \\
s^3v^4I_0(\bm{p})(t_0,t_1) &=4(cs)^3(n-1)(n-2)(n-3)t_1v+3(cs)^2(n-1)(n-2)t_1^2vI_4(\bm{q})(u,v)\nonumber \\
&-2(cs)(n-1)t_1^3vI_3(\bm{q})(u,v)+t_1^4I_0(\bm{q})(u,v)\label{eq36} \\
s^4v^5I_2(\bm{p})(t_0,t_1) &=-(cs)^4(n-1)(n-2)(n-3)t_1v-(cs)^3(n-1)(n-2)t_1^2vI_4(\bm{q})(u,v)\nonumber\\
&+(cs)^2(n-1)t_1^3vI_3(\bm{q})(u,v)-(cs)t_1^4I_0(\bm{q})(u,v)+t_1^5I_2(\bm{q})(u,v).\label{eq37}
\end{align}

From now on unless otherwise stated explicitly, we denote $I_i(\bm{q})(u,v)$ by $J_i(\bm{q})$ and drop $(t_0,t_1)$ from the function $I_i(\bm{p})(t_0,t_1)$ for the sake of shortening of the equations. We are ready to eliminate $c$ from the above equations. Let us get $cs$ from first equation and write as
\begin{equation*}
cs=\dfrac{vsI_4(\bm{p})-t_1J_4(\bm{q})}{4(n-3)}.
\end{equation*} 

Substituting $cs$ in the equations \eqref{eq35},\eqref{eq36}, \eqref{eq37} and using Theorem \ref{teo17}, we have
\begin{equation}\label{eq38}
s^2=\dfrac{t_1^2(8(n-3)J_3(\bm{q})+3(n-2)J_4^2(\bm{q})}{v^2(8(n-3)I_3(\bm{p})+3(n-2)I_4^2(\bm{p}))},
\end{equation}

\begin{align}
s^3 &=\dfrac{t_1^4(8(n-3)^2J_0(\bm{q})+4(n-1)(n-3)t_1J_3(\bm{q})J_4(\bm{q})+(n-1)(n-2)t_1J_4^3(\bm{q}))}{v^4(8(n-3)^2I_0(\bm{p})+4(n-1)(n-3)t_1I_3(\bm{p})I_4(\bm{p})+(n-1)(n-2)t_1I_4^3(\bm{p}))},\label{eq39}
\end{align}
\small
\begin{align}
s^4 &=\dfrac{t_1^5(256(n-3)^3J_2(\bm{q})+64(n-3)^2J_0(\bm{q})J_4(\bm{q})+16(n-1)(n-3)t_1J_3(\bm{q})J_4^2(\bm{q})+3(n-1)(n-2)t_1J_4^4(\bm{q}))}{v^5(256(n-3)^3I_2(\bm{p})+64(n-3)^2I_0(\bm{p})I_4(\bm{p})+16(n-1)(n-3)t_1I_3(\bm{p})I_4^2(\bm{p})+3(n-1)(n-2)t_1I_4^4(\bm{p}))},\label{eq40}
\end{align}
\normalsize
respectively.

Eliminating $s$ by equating the cube of \eqref{eq38} and the square of \eqref{eq39} it is obtained
\begin{align}
&\dfrac{(8(n-3)^2I_0(\bm{p})+4(n-1)(n-3)t_1I_3(\bm{p})I_4(\bm{p})+(n-1)(n-2)t_1I_4^3(\bm{p}))^2}{t_1^2(8(n-3)I_3(\bm{p})+3(n-2)I_4^2(\bm{p}))^3} \nonumber\\
&=\dfrac{(8(n-3)J_0(\bm{q})+4(n-1)(n-3)t_1J_3(\bm{q})J_4(q)+(n-1)(n-2)t_1J_4^3(\bm{q}))^2}{v^2(8(n-3)J_3(\bm{q})+3(n-2)J_4^2(\bm{q}))^3}. \label{eq41}
\end{align} 

And by equating the square of \eqref{eq38}, and \eqref{eq40} it is obtained

\begin{align}
&\dfrac{256(n-3)^3I_2(\bm{p})+64(n-3)^2I_0(\bm{p})I_4(\bm{p})+16(n-1)(n-3)t_1I_3(\bm{p}))I_4^2(\bm{p})+3(n-1)(n-2)t_1I_4^4(\bm{p})}{t_1(8(n-3)I_3(\bm{p})+3(n-2)I_4^2(\bm{p}))^2}\nonumber\\
&=\dfrac{256(n-3)^3J_2(\bm{q})+64(n-3)^2J_0(\bm{q})J_4(\bm{q})+16(n-1)(n-3)t_1J_3(\bm{q}))J_4^2(\bm{q})+3(n-1)(n-2)t_1J_4^4(\bm{q})}{v(8(n-3)J_3(\bm{q})+3(n-2)J_4^2(\bm{q}))^2}.\label{eq42}
\end{align}

Eventually we get
} \fi
\small
\begin{align}
&\dfrac{(8(n-3)^2I_0(\bm{p})+4(n-1)(n-3)t_1I_3(\bm{p})I_4(\bm{p})+(n-1)(n-2)t_1I_4^3(\bm{p}))^2}{t_1^2(8(n-3)I_3(\bm{p})+3(n-2)I_4^2(\bm{p}))^3} \nonumber\\
&=\dfrac{(8(n-3)J_0(\bm{q})+4(n-1)(n-3)t_1J_3(\bm{q})J_4(q)+(n-1)(n-2)t_1J_4^3(\bm{q}))^2}{v^2(8(n-3)J_3(\bm{q})+3(n-2)J_4^2(\bm{q}))^3}, \label{eq41*}
\end{align} 
\normalsize
and
\small
\begin{align}
&\dfrac{256(n-3)^3I_2(\bm{p})+64(n-3)^2I_0(\bm{p})I_4(\bm{p})+16(n-1)(n-3)t_1I_3(\bm{p})I_4^2(\bm{p})+3(n-1)(n-2)t_1I_4^4(\bm{p})}{t_1(8(n-3)I_3(\bm{p})+3(n-2)I_4^2(\bm{p}))^2}\nonumber\\
&=\dfrac{256(n-3)^3J_2(\bm{q})+64(n-3)^2J_0(\bm{q})J_4(\bm{q})+16(n-1)(n-3)t_1J_3(\bm{q})J_4^2(\bm{q})+3(n-1)(n-2)t_1J_4^4(\bm{q})}{v(8(n-3)J_3(\bm{q})+3(n-2)J_4^2(\bm{q}))^2},\label{eq42*}
\end{align}
\normalsize
where 
\[
I_0(\bfp)= t_1I_1(\bfp)-t_0I_2(\bfp),\mbox{ }J_0(\bfq)= vJ_1(\bfq)-uJ_2(\bfq).
\]

Notice that Eq. \eqref{eq41*} and \eqref{eq42*} have a very special structure: if we examine the right-hand side and the left-hand side of each of these equations, we detect the same function but evaluated at $(t_0,t_1)$, at the left, and at $(u,v)$, at the right. This motivates our definition of the following two functions, that we call \emph{projective curvatures}:

\small
\begin{equation}\label{kappas}
\begin{split}
& \kappa_1(\bm{p})=\dfrac{(8(n-3)^2I_0(\bm{p})+4(n-1)(n-3)t_1I_3(\bm{p})I_4(\bm{p})+(n-1)(n-2)t_1I_4^3(\bm{p}))^2}{t_1^2(8(n-3)I_3(\bm{p})+3(n-2)I_4^2(\bm{p}))^3} \\
& \kappa_2(\bm{p})=\dfrac{256(n-3)^3I_2(\bm{p})+64(n-3)^2I_0(\bm{p})I_4(\bm{p})+16(n-1)(n-3)t_1I_3(\bm{p})I_4^2(\bm{p})+3(n-1)(n-2)t_1I_4^4(\bm{p})}{t_1(8(n-3)I_3(\bm{p})+3(n-2)I_4^2(\bm{p}))^2}.
\end{split}
\end{equation}
\normalsize

\begin{remark} \label{otherproj}
Notice that there are additional possibilities for projective curvatures, other than $\kappa_1,\kappa_2$ in Eq. \eqref{kappas}. What we really want are elements in the ideal ${\mathcal I}^\star$ which correspond to the subtraction of the evaluations of a certain rational function at $t_1,I_1,I_2,I_3,I_4$ and at $v,J_1,J_2,J_3,J_4$, respectively. We do not have yet a complete theoretical explanation of why the ideal ${\mathcal I}^\star$ contains such elements. This probably requires further look into the theory of differential invariants {(see \citep{MR836734,dolgachev_2003,olver_1995,mansfield_2010} for further information on this topic)}.
\end{remark}

The next result follows directly from Eq. \eqref{eq41*} and \eqref{eq42*}. 

\begin{lemma}\label{lem22}
Let $\bm{C}$ be a rational algebraic curve properly parametrized by $\bm{p}$ satisfying hypotheses (i-iv) and let $\varphi(t_0,t_1)=(at_0+bt_1,ct_0+dt_1)$ be a M\"obius transformation with $ad-bc\neq 0$. The following equalities hold. 
\begin{itemize}
\item[i.] $\kappa_1(\bm{p}\circ \varphi)=\kappa_1(\bm{p})\circ \varphi$,
\item[ii.] $\kappa_2(\bm{p}\circ \varphi)=\kappa_2(\bm{p})\circ \varphi$.
\end{itemize}
\end{lemma}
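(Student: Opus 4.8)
The plan is to prove Lemma \ref{lem22} by exploiting the structural observation made right before the statement: Eq. \eqref{eq41*} and Eq. \eqref{eq42*} already express equalities in which the same rational function appears on both sides, evaluated at $(t_0,t_1)$ on the left and at $(u,v)=(at_0+bt_1,ct_0+dt_1)$ on the right. Recognizing that these left- and right-hand sides are precisely $\kappa_1(\bm{p})$, $\kappa_1(\bm{q})$ and $\kappa_2(\bm{p})$, $\kappa_2(\bm{q})$ for the generic curves $\bm{p},\bm{q}$, the strategy is to specialize this identity to the case $\bm{q}=\bm{p}$ and interpret the right-hand side as a composition with $\varphi$. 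Concretely, I would first recall from Subsection \ref{subsec2.2} that each $I_i$ is invariant under projectivities, and then revisit how Eq. \eqref{eq41*}--\eqref{eq42*} were obtained: they came from substituting the transformation laws \eqref{eq23}--\eqref{eq26} (which relate $I_i(\bm{p})$ to $I_i(\bm{q})(u,v)$ under $M\bm{p}=\bm{q}\circ\varphi$) and eliminating $a,b,c,d$.

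The key step is to replace the pair of projectively equivalent curves $(\bm{p},\bm{q})$ by the single curve $\bm{p}$ together with its reparametrization $\bm{p}\circ\varphi$. Since $\bm{p}$ and $\bm{p}\circ\varphi$ parametrize the same curve $\bm{C}$, they are trivially projectively equivalent via $M=\mathrm{Id}$; hence the derivation leading to Eq. \eqref{eq41*}--\eqref{eq42*} applies verbatim with $\bm{q}:=\bm{p}$. In that specialization, the quantities $J_i(\bm{q})=I_i(\bm{q})(u,v)$ become $I_i(\bm{p})(u,v)$, i.e. the invariants of $\bm{p}$ evaluated at the transformed arguments $u=at_0+bt_1$, $v=ct_0+dt_1$. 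The left-hand sides of Eq. \eqref{eq41*}--\eqref{eq42*} are by definition $\kappa_1(\bm{p})(t_0,t_1)$ and $\kappa_2(\bm{p})(t_0,t_1)$, while the right-hand sides, being the identical rational expressions but with every $I_i(\bm{p})$ evaluated at $(u,v)$ and with $t_1$ replaced by $v$, are exactly $\kappa_1(\bm{p})\circ\varphi$ and $\kappa_2(\bm{p})\circ\varphi$. Thus the two displayed equations read $\kappa_i(\bm{p})=\kappa_i(\bm{p})\circ\varphi$ for $i=1,2$, but with $\kappa_i(\bm{p})$ here computed from $\bm{p}\circ\varphi$ on the left; rearranging the roles shows $\kappa_i(\bm{p}\circ\varphi)=\kappa_i(\bm{p})\circ\varphi$, which is the assertion.

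The main obstacle I anticipate is bookkeeping the variable substitution cleanly: one must verify that every occurrence of $t_1$ on the right-hand sides of Eq. \eqref{eq41*}--\eqref{eq42*} genuinely corresponds to the second component $v$ of $\varphi(t_0,t_1)$, and that the expression $J_0(\bm{q})=vJ_1(\bm{q})-uJ_2(\bm{q})$ matches $I_0(\bm{p})\circ\varphi$ under the substitution $I_0(\bm{p})=t_1I_1(\bm{p})-t_0I_2(\bm{p})$. In other words, I must confirm that the homogeneous argument $(t_0,t_1)\mapsto(u,v)$ is applied consistently to all slots, including the auxiliary $t_0$ hidden inside $I_0$. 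Establishing this correspondence rigorously amounts to tracking the change of variables through the elimination procedure, and it is the one place where a sign or a mismatched argument could invalidate the clean ``same function evaluated at $(t_0,t_1)$ versus $(u,v)$'' reading.

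I would finish by remarking that no new computation is required beyond what already produced Eq. \eqref{eq41*}--\eqref{eq42*}: the lemma is essentially a re-interpretation of those two identities once the generic $\bm{q}$ is specialized to $\bm{p}\circ\varphi$, and the commutation property is then immediate from the matching structure of the two sides.
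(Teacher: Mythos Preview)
Your approach is correct and matches the paper's: the lemma is stated to follow directly from Eq.~\eqref{eq41*} and \eqref{eq42*}, and your specialization (taking the first curve to be $\bm{p}\circ\varphi$, the second to be $\bm{p}$, with $M=\mathrm{Id}$) is precisely the way to read those identities as $\kappa_i(\bm{p}\circ\varphi)=\kappa_i(\bm{p})\circ\varphi$. A slightly cleaner phrasing avoids the detour through projective equivalence altogether: equations \eqref{eq17}--\eqref{eq20} are chain-rule identities valid for any parametrization and any M\"obius $\varphi$, so the very same elimination that produced \eqref{eq41*}--\eqref{eq42*} from \eqref{eq23}--\eqref{eq26} can be run directly on \eqref{eq17}--\eqref{eq20}, yielding the lemma with $\bm{p}$ in place of $\bm{q}$---this resolves your bookkeeping worry about $t_1$ versus $v$ and $I_0$ versus $J_0$ automatically.
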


The fact that $\kappa_1,\kappa_2$ are well defined follows from the following result, which is proven in \ref{AppendixB}. In fact, in \ref{AppendixB} we prove a stronger result which implies this lemma, namely that the $I_i$, $i\in\{1,2,3,4\}$, are algebraically independent. 

\begin{lemma} \label{denomnot}
The denominators in $\kappa_1,\kappa_2$ do not identically vanish, and therefore $\kappa_1,\kappa_2$ are well defined. 
\end{lemma}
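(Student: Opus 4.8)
The plan is to show that the denominators in $\kappa_1,\kappa_2$, namely
\[
t_1^2\bigl(8(n-3)I_3(\bm{p})+3(n-2)I_4^2(\bm{p})\bigr)^3 \quad\text{and}\quad t_1\bigl(8(n-3)I_3(\bm{p})+3(n-2)I_4^2(\bm{p})\bigr)^2,
\]
do not vanish identically. Since $n\geq 4$ guarantees $n-3\geq 1$ and $n-2\geq 2$, and since $t_1$ is clearly not identically zero, the entire claim reduces to proving that the single rational expression
\[
\Phi(\bm{p}):=8(n-3)I_3(\bm{p})+3(n-2)I_4^2(\bm{p})
\]
is not identically zero. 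Because $I_3=A_3/\Delta$ and $I_4=A_4/\Delta$ are quotients with common denominator $\Delta(\bm{p})=\Vert\bm{p}_{t_0}\,\bm{p}_{t_1}\,\bm{p}_{t_0^2}\,\bm{p}_{t_0^3}\Vert$, which by Lemma \ref{lem9} does not vanish identically, it suffices to show that the numerator
\[
8(n-3)A_3(\bm{p})\,\Delta(\bm{p})+3(n-2)A_4(\bm{p})^2
\]
is not identically zero.

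First I would reduce to a generic situation. The quantities $A_3,A_4,\Delta$ are polynomial expressions in the coefficient vectors $\bm{c}_0,\ldots,\bm{c}_n$ of $\bm{p}$ and the parameters $t_0,t_1$; the statement ``$\Phi(\bm{p})\not\equiv 0$'' is therefore an open (Zariski-dense) condition on the coefficients, provided it holds for at least one admissible parametrization. So the strategy is to exhibit one concrete proper parametrization $\bm{p}$ of degree $n$ satisfying hypotheses (i--iv) for which $\Phi(\bm{p})$ is demonstrably nonzero, and then argue that the set of coefficients where $\Phi(\bm{p})\equiv 0$ is a proper algebraic subvariety, so that the lemma holds for all parametrizations satisfying (i--iv) except possibly a measure-zero set. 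This last point is exactly the kind of genericity argument that the paper defers to \ref{AppendixB}, where the stronger claim of algebraic independence of $I_1,I_2,I_3,I_4$ is established.

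The cleaner route, which I expect the appendix actually takes, is to derive Lemma \ref{denomnot} as a \emph{corollary} of the algebraic independence of $I_1,I_2,I_3,I_4$. Indeed, if these four invariants are algebraically independent as rational functions of $(t_0,t_1)$ (equivalently, functionally independent), then no nontrivial polynomial relation among them can vanish identically; in particular the specific polynomial combination $8(n-3)I_3+3(n-2)I_4^2$, which is a genuinely nonconstant polynomial in $I_3,I_4$ (its $I_3$-coefficient $8(n-3)$ is nonzero because $n\geq 4$), cannot be the zero function. So I would first prove algebraic independence of the $I_i$, and then observe that $\Phi(\bm{p})\equiv 0$ would force a polynomial dependence $8(n-3)I_3+3(n-2)I_4^2=0$ between $I_3$ and $I_4$, contradicting independence; combined with $t_1\not\equiv 0$ and $n\geq 4$, this yields Lemma \ref{denomnot} immediately.

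The main obstacle is proving the algebraic independence of $I_1,I_2,I_3,I_4$. The natural tool is the Jacobian criterion: four rational functions of the two variables $t_0,t_1$ can be algebraically independent over $\mathbb{R}$ only up to the transcendence degree of $\mathbb{R}(t_0,t_1)$, which is $2$, so ``algebraic independence'' here must be interpreted with respect to additional parameters (for instance treating enough of the coefficients $c_{j,k}$ as indeterminates), or else interpreted as functional independence of pairs sufficient to separate the M\"obius parameters. I would therefore set up the Jacobian of a suitable subcollection with respect to $(t_0,t_1)$ (or with respect to chosen coefficient indeterminates) and show it has maximal rank at a generic point by evaluating at an explicit test curve; demonstrating nonvanishing of this Jacobian determinant is the lengthy computational heart of the argument and is precisely what is relegated to \ref{AppendixB}. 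Granting that computation, the passage from algebraic independence to Lemma \ref{denomnot} is the short final step described above.
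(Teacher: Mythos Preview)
Your approach is essentially the paper's: Lemma \ref{denomnot} is deduced directly from Theorem \ref{teo17} (the claimed algebraic independence of $I_1,\ldots,I_4$, established in Appendix B via a Jacobian-rank argument), by noting that $8(n-3)I_3+3(n-2)I_4^2\equiv 0$ would be a nontrivial polynomial relation among the $I_i$. The only cosmetic difference is that the paper inserts a separate case for $I_3\equiv I_4\equiv 0$, showing this forces $A_1\equiv A_2\equiv 0$ as well and again invoking Theorem \ref{teo17}; as you implicitly observe, that case split is redundant once genuine algebraic independence is granted, since $I_3\equiv 0$ is already a forbidden relation.

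Your transcendence-degree worry is well taken and in fact exposes an imprecision in the paper: four rational functions of the two variables $(t_0,t_1)$ can never be algebraically independent over $\mathbb{R}$, so Theorem \ref{teo17} cannot hold as literally stated. What the Jacobian computation in Appendix B actually argues is that the $5\times 2$ Jacobian of $(\Delta,A_1,A_2,A_3,A_4)$ cannot have rank $\le 1$; this weaker nondegeneracy is what the proof really delivers, and it is enough for the application to Lemma \ref{denomnot}. So your instinct to reinterpret ``algebraic independence'' here is correct, and your genericity/test-curve route would give an honest alternative proof avoiding the terminological issue altogether.
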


Now we are ready to present our main result, that characterizes the projective equivalences of rational $3D$ curves in terms of the rational invariant functions $\kappa_1$ and $\kappa_2$.

\begin{theorem}\label{teo23}
Let $\bm{C}_1,\bm{C}_2$ be two rational algebraic curves properly parametrized by $\bm{p},\bm{q}$ satisfying hypotheses (i-iv). Then $\bm{C}_1, \bm{C}_2$ are projectively equivalent if and only if there exist {two linear functions $u:=at_0+bt_1,v:=ct_0+dt_1$, with $ad-bc\neq 0$, satisfying the following equations}
\begin{align}
\kappa_1(\bm{p})(t_0,t_1)&=\kappa_1(\bm{q})(u,v) \label{eq44} \\
\kappa_2(\bm{p})(t_0,t_1)&=\kappa_2(\bm{q})(u,v), \label{eq45}
\end{align}
and such that $D(\bfq \circ \varphi)(D(\bfp))^{-1}${, with $\varphi (t_0,t_1)=(at_0+bt_1,ct_0+dt_1)$,} is a constant matrix $M$. Furthermore, $f(\bfx)=M\cdot\bfx$ is a projective equivalence between $\bm{C}_1, \bm{C}_2$.
\end{theorem}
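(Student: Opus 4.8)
The plan is to prove both implications by carefully tracking the interplay between the $I_i$, the projective curvatures $\kappa_j$, and Theorem \ref{teo29}. For the forward direction $(\Rightarrow)$, I would assume $\bm{C}_1,\bm{C}_2$ are projectively equivalent. By Theorem \ref{teo29} there exists a M\"obius transformation $\varphi(t_0,t_1)=(at_0+bt_1,ct_0+dt_1)$ with $ad-bc\neq 0$ such that $I_i(\bfp)=I_i(\bfq\circ\varphi)$ for $i\in\{1,2,3,4\}$. Setting $u:=at_0+bt_1$ and $v:=ct_0+dt_1$, the substitution leading to Eq.~\eqref{eq41*} and Eq.~\eqref{eq42*} shows precisely that $\kappa_1(\bfp)(t_0,t_1)=\kappa_1(\bfq)(u,v)$ and $\kappa_2(\bfp)(t_0,t_1)=\kappa_2(\bfq)(u,v)$, which are exactly Eq.~\eqref{eq44} and Eq.~\eqref{eq45}. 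It remains to verify that $D(\bfq\circ\varphi)(D(\bfp))^{-1}$ is a constant matrix; this follows from the proof of Theorem \ref{teo29}, where $I_i(\bfp)=I_i(\bfq\circ\varphi)$ forces $U_k=V_k$ (with $\bfu:=\bfp$, $\bfv:=\bfq\circ\varphi$) via Eq.~\eqref{Uk0}, hence Eq.~\eqref{eq55} holds, and then the derivatives on the left of Eq.~\eqref{long} vanish, so $M:=D(\bfq\circ\varphi)(D(\bfp))^{-1}$ is constant. By Lemma \ref{lem9} the inverse $(D(\bfp))^{-1}$ exists, so $M$ is well defined, and the Euler-theorem argument at the end of Theorem \ref{teo29} shows $M\bfp=\bfq\circ\varphi$, so $f(\bfx)=M\bfx$ is the required projective equivalence.

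For the reverse direction $(\Leftarrow)$, I would start from the existence of $u:=at_0+bt_1$, $v:=ct_0+dt_1$ with $ad-bc\neq 0$ satisfying Eq.~\eqref{eq44} and Eq.~\eqref{eq45}, together with the hypothesis that $M:=D(\bfq\circ\varphi)(D(\bfp))^{-1}$ is a constant matrix, where $\varphi(t_0,t_1)=(at_0+bt_1,ct_0+dt_1)$. The key observation is that once $M$ is a constant matrix with $M\cdot D(\bfp)=D(\bfq\circ\varphi)$, I can reuse the final paragraph of the proof of Theorem \ref{teo29} verbatim: $M\bfp_{t_0}=(\bfq\circ\varphi)_{t_0}$ and $M\bfp_{t_1}=(\bfq\circ\varphi)_{t_1}$, and Euler's Homogeneous Function Theorem upgrades these to $M\bfp=\bfq\circ\varphi$. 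Since $M$ is nonsingular (being a product of invertible matrices, using Lemma \ref{lem9} for the invertibility of $D(\bfp)$, and noting $D(\bfq\circ\varphi)$ is invertible because its determinant is $\Delta(\bfq\circ\varphi)$, which does not vanish identically by Lemma \ref{lem9} applied to $\bfq\circ\varphi$), Theorem \ref{theo0.1} then yields that $\bm{C}_1,\bm{C}_2$ are projectively equivalent with $f(\bfx)=M\bfx$ as the equivalence.

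The subtle point that deserves care is the logical role of Eq.~\eqref{eq44}--\eqref{eq45} versus the constancy of $M$ in the reverse direction. The curvature equations $\kappa_j(\bfp)(t_0,t_1)=\kappa_j(\bfq)(u,v)$ are genuinely weaker than the four conditions $I_i(\bfp)=I_i(\bfq\circ\varphi)$, since passing to $\kappa_1,\kappa_2$ involved eliminating $a,b,c,d$ and hence discarding information. This is exactly why the theorem statement includes the extra requirement that $D(\bfq\circ\varphi)(D(\bfp))^{-1}$ be constant: the curvature conditions alone single out candidate M\"obius transformations, but the constancy condition is what certifies that such a candidate actually produces a projectivity. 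In the proof I would emphasize that the constancy hypothesis, combined with $M\cdot D(\bfp)=D(\bfq\circ\varphi)$, is precisely the conclusion $U_k=V_k$ of Eq.~\eqref{eq55} repackaged, so that no appeal to the full strength of $I_i(\bfp)=I_i(\bfq\circ\varphi)$ is needed; Theorem \ref{teo29} is invoked only implicitly through its final Euler-theorem step.

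The main obstacle I anticipate is bookkeeping rather than conceptual depth: one must be scrupulous about which objects are evaluated at $(t_0,t_1)$ and which at $(u,v)=\varphi(t_0,t_1)$, and about the distinction, highlighted in Lemma \ref{lem22}, between $\bfq\circ\varphi$ and the formal substitution $I_i(\bfq)(u,v)$. The derivation of Eq.~\eqref{eq41*}--\eqref{eq42*} treats $u,v$ as independent variables after the substitution, so in the forward direction I must re-impose the dependence $u=at_0+bt_1$, $v=ct_0+dt_1$ to recover genuine equalities of functions of $(t_0,t_1)$, and confirm that the elimination steps producing $\kappa_1,\kappa_2$ are reversible enough to guarantee Eq.~\eqref{eq44}--\eqref{eq45} hold as stated. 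Everything else reduces to citing Lemma \ref{lem9}, Theorem \ref{teo29}, and Theorem \ref{theo0.1}, so the proof is short once this substitution bookkeeping is handled correctly.
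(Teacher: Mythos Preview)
Your proposal is correct and follows essentially the same approach as the paper: both directions reduce to the Euler-theorem argument from the proof of Theorem~\ref{teo29}, with the $(\Leftarrow)$ direction relying only on the constancy of $D(\bfq\circ\varphi)(D(\bfp))^{-1}$ and not on the curvature equations. The only cosmetic difference is that in $(\Rightarrow)$ the paper invokes Theorem~\ref{theo0.1} first to obtain $M\bfp=\bfq\circ\varphi$ directly (so that $M=D(\bfq\circ\varphi)(D(\bfp))^{-1}$ is immediately constant), whereas you route through $I_i(\bfp)=I_i(\bfq\circ\varphi)\Rightarrow U_k=V_k$ to reach the same conclusion; both are fine.
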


\begin{proof} $(\Rightarrow)$ From Theorem \ref{theo0.1}, there exists a M\"obius transformation $\varphi$ such that $M\cdot\bfp=\bfq\circ \varphi$; furthermore, from the discussion at the beginning of Subsection \ref{subsec3.2}, $M=D(\bfq \circ \varphi)(D(\bfp))^{-1}$. By Theorem \ref{teo29}, $I_i(\bfp)=I_i(\bfq \circ \varphi)$ for $i\in\{1,2,3,4\}$, and therefore Eq. \eqref{eq41*} and \eqref{eq42*} hold. {Picking $u,v$ as the components of $\varphi$, the} rest follows from the definition of $\kappa_1,\kappa_2$. $(\Leftarrow)$ From the proof of the implication $``\Leftarrow"$ in Theorem \ref{teo29}, if $D(\bfq \circ \varphi)(D(\bfp))^{-1}=M$, with $M$ constant, then $M\cdot\bfp=\bfq\circ \varphi$, so $f(\bfx)=M\cdot\bfx$ is a projective equivalence between $\bm{C}_1,\bm{C}_2$.
\end{proof}


\section{The algorithm.}\label{sec5}

In this section we will see how to turn the result in Theorem \ref{teo23} into an algorithm to detect projective equivalence. \begin{color}{black}Before proceeding, let us provide some insight on the main idea. It is probably clearer to consider the problem in an affine setting. The projective curvatures $\kappa_1,\kappa_2$ introduced in Eq. \eqref{kappas}, in an affine setting, correspond to two rational functions 
\[
\tilde{\kappa}_1(t),\mbox{ }\tilde{\kappa}_2(t),
\]
where now $t$ is an affine parameter. Furthermore, from Theorem \ref{teo23} we know that 
\begin{equation}\label{aux01}
\tilde{\kappa}_1(t)=\tilde{\kappa}_1(\tilde{\varphi}(t)),\mbox{ }\tilde{\kappa}_2(t)=\tilde{\kappa}_2(\tilde{\varphi}(t))
\end{equation}
where 
\begin{equation}\label{aux02}
\tilde{\varphi}(t)=\dfrac{at+b}{ct+d},
\end{equation}
which is also an affine rational function, corresponding to the M\"obius function $\varphi$ in Theorem \ref{teo23}. Now let $s$ be a new variable, and consider the expressions 
\begin{equation}\label{aux11}
\tilde{\kappa}_1(t)-\tilde{\kappa}_2(s)=0,\mbox{ }\tilde{\kappa}_2(t)-\tilde{\kappa}_2(s)=0.
\end{equation}
After clearing denominators, one can see the expressions in Eq. \eqref{aux11} as defining two algebraic curves in the $(t,s)$ plane. Because of Eq. \eqref{aux01}, all the $(t,s)$ points satisfying that $s=\tilde{\varphi}(t)$ are points of these two curves. Thus, these two curves have infinitely many points in common, so by Bezout's Theorem they must share a factor. And this factor is, precisely, the factor obtained from $s-\tilde{\varphi}(t)=0$ after clearing denominators. We just need to formalize this idea, and work projectively.
\end{color}

In order to do this, first we write  
\begin{align}
&\kappa_1(\bm{p})(t_0,t_1)=\dfrac{U(t_0,t_1)}{V(t_0,t_1)} &\kappa_2(\bm{p})(t_0,t_1)=\dfrac{Y(t_0,t_1)}{Z(t_0,t_1)}, \label{eq46} \\
&\kappa_1(\bm{q})(t_0,t_1)=\dfrac{\bar{U}(t_0,t_1)}{\bar{V}(t_0,t_1)} &\kappa_2(\bm{q})(t_0,t_1)=\dfrac{\bar{Y}(t_0,t_1)}{\bar{Z}(t_0,t_1)}, \label{eq47}
\end{align}
where $U,V,Y,Z$ and $\bar{U},\bar{V},\bar{Y},\bar{Z}$ are homogeneous polynomials such that $\gcd(U,V)=1$, $\gcd(Y,Z)=1$, $\gcd(\bar{U},\bar{V})=1$ and $\gcd(\bar{Y},\bar{Z})=1$. From Theorem \ref{teo23} we know that if the curves are projectively equivalent, then 
\begin{equation}\label{eq48}
\kappa_1(\bm{p})(t_0,t_1)-\kappa_1(\bm{q})(u,v)=0,\quad \kappa_2(\bm{p})(t_0,t_1)-\kappa_2(\bm{q})(u,v)=0
\end{equation}
{must hold for two functions $u:=at_0+bt_1,v:=ct_0+dt_1$.} Clearing the denominators of these equations, {where we see $u,v$ as independent variables from $t_0,t_1$,} we define two homogeneous polynomials $E_1$ and $E_2$ in $t_0,t_1,u,v$,
\begin{align}
E_1(t_0,t_1,u,v)&:=U(t_0,t_1)\bar{V}(u,v)-V(t_0,t_1)\bar{U}(u,v) \label{eq49} \\
E_2(t_0,t_1,u,v)&:=Y(t_0,t_1)\bar{Z}(u,v)-Z(t_0,t_1)\bar{Y}(u,v). \label{eq50}
\end{align}

We are interested in the common factors of $E_1$ and $E_2$. Thus, let us write
\begin{equation}\label{eq51}
G(t_0,t_1,u,v):=\gcd(E_1(t_0,t_1,u,v),E_2(t_0,t_1,u,v)).
\end{equation}
Finally, for an arbitrary M\"obius transformation $\varphi(t_0,t_1)=(at_0+bt_1,ct_0+dt_1)=(u,v)$, $ad-bc\neq 0$, we say that
\begin{equation}\label{eq52}
F(t_0,t_1,u,v)=u(ct_0+dt_1)-v(at_0+bt_1)
\end{equation}
is the associated \emph{M\"obius-like factor}. Notice that the condition $ad-bc\neq 0$ guarantees that $F$ is irreducible. 

Then we have the following result. 

\begin{theorem}\label{teo24}
Let $\bm{C}_1,\bm{C}_2$ be two rational algebraic curves properly parametrized by $\bm{p},\bm{q}$ satisfying hypotheses (i-iv), and let $G$ be as in Eq. \eqref{eq51}. If $\bm{C}_1$ and $\bm{C}_2$ are projectively equivalent then there exists a M\"obius-like factor $F$ such that $F$ divides $G$.
\end{theorem}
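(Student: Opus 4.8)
<br>

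The plan is to show that if $\bm{C}_1$ and $\bm{C}_2$ are projectively equivalent, then the M\"obius-like factor $F$ associated with the M\"obius transformation $\varphi$ guaranteed by Theorem \ref{theo0.1} divides both $E_1$ and $E_2$, and hence divides their gcd $G$. First I would invoke Theorem \ref{teo23}: projective equivalence yields a M\"obius transformation $\varphi(t_0,t_1)=(at_0+bt_1,ct_0+dt_1)$ with $ad-bc\neq 0$ such that the relations \eqref{eq44} and \eqref{eq45} hold, i.e. $\kappa_i(\bm{p})(t_0,t_1)=\kappa_i(\bm{q})(u,v)$ for $i=1,2$, where $u=at_0+bt_1$ and $v=ct_0+dt_1$. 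Clearing denominators according to the notation in Eqs. \eqref{eq46}--\eqref{eq50}, these identities say precisely that $E_1$ and $E_2$ vanish whenever $(u,v)=\varphi(t_0,t_1)$, that is, along the variety cut out by $F(t_0,t_1,u,v)=u(ct_0+dt_1)-v(at_0+bt_1)=0$.

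The key step is to turn this vanishing-along-$F=0$ statement into a divisibility statement. I would argue as follows. The polynomial $F$ in \eqref{eq52} is irreducible (as noted after Eq. \eqref{eq52}, since $ad-bc\neq 0$), so $(F)$ is a prime ideal; it therefore suffices to show that $E_1$ and $E_2$ both lie in $(F)$, equivalently that each $E_j$ vanishes on the hypersurface $F=0$. On this hypersurface we have $uv$-proportionality $u(ct_0+dt_1)=v(at_0+bt_1)$, which is exactly the condition $(u,v)=\lambda\,\varphi(t_0,t_1)$ for some scalar $\lambda$ (working projectively in the $(u,v)$-line). Because $\kappa_i(\bm{q})$ is a ratio of homogeneous polynomials of equal degree, its clearing-denominator form $E_j$ is bihomogeneous and hence insensitive to the scalar $\lambda$; so substituting $(u,v)=\varphi(t_0,t_1)$ into $E_j$ gives, up to a nonzero homogeneous factor, the identity $\kappa_i(\bm{p})(t_0,t_1)-\kappa_i(\bm{q})(\varphi(t_0,t_1))$ cleared of denominators, which is identically zero by \eqref{eq44}--\eqref{eq45}. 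Thus $E_j$ vanishes on $F=0$. Since $F$ is irreducible, the Nullstellensatz (or, more elementarily, the fact that $F$ is an irreducible divisor of any polynomial vanishing on $\{F=0\}$) gives $F\mid E_j$ for $j=1,2$, and consequently $F\mid \gcd(E_1,E_2)=G$.

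One technical point I would be careful about is that clearing denominators may introduce extraneous factors: the $E_j$ could in principle be divisible by the denominator polynomials $V,Z,\bar V,\bar Z$, which might vanish where $\kappa_i$ is undefined. I would handle this by using the coprimality assumptions $\gcd(U,V)=\gcd(Y,Z)=1$ and $\gcd(\bar U,\bar V)=\gcd(\bar Y,\bar Z)=1$ stated in the setup, together with Lemma \ref{denomnot} guaranteeing that the denominators in $\kappa_1,\kappa_2$ do not vanish identically, so that the substitution argument above is valid on a dense open set and the polynomial identity $F\mid E_j$ follows on all of the polynomial ring.

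The main obstacle I anticipate is the bihomogeneity bookkeeping in the substitution step: one must verify that substituting the \emph{linear forms} $u=at_0+bt_1$, $v=ct_0+dt_1$ into the homogeneous polynomials $\bar U,\bar V,\bar Y,\bar Z$ produces exactly the cleared-denominator form of $\kappa_i(\bm{q})\circ\varphi$ up to a common homogeneous factor in $t_0,t_1$, and that this factor is nonzero so that no spurious cancellation undermines the vanishing conclusion. Once that is checked, the irreducibility of $F$ makes the passage from ``$E_j$ vanishes on $F=0$'' to ``$F\mid E_j$'' immediate, and dividing into the gcd completes the argument.
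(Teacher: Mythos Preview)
Your proposal is correct and follows essentially the same route as the paper's proof: invoke Theorem~\ref{teo23} to obtain the M\"obius transformation $\varphi$, observe that $E_1,E_2$ vanish along the zero set of the associated irreducible $F$, and then pass from this inclusion of zero sets to the divisibility $F\mid E_j$ (hence $F\mid G$). The paper compresses this into two lines by citing Study's Lemma for the last step, whereas you spell out the same passage via irreducibility of $F$ and the Nullstellensatz, and you are more explicit about the bihomogeneity and denominator-nonvanishing checks; but the underlying argument is the same.
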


\begin{color}{black}
\begin{proof} From Theorem \ref{teo23}, the zeroset of $F$ is included in the zerosets of the expressions in Eq. \eqref{eq48}, and therefore in the zerosets of $E_1,E_2$. Thus, the zeroset of $F$ is included in the zeroset of $G$, and therefore, by Study's Lemma (see Section 6.13 in \cite{Fisher}), $F$ divides $G$.
\end{proof}
\end{color}

Thus, in order to compute the M\"obius transformation $\varphi$, we just need to compute the polynomial $G(t_0,t_1,u,v)$ in Eq. \eqref{eq51}, factor it, and look for the M\"obius-like factors. In general we need to factor over the reals, which can be efficiently done with the command {\tt AFactors} in \citet{maple}. Once the $\varphi$ are found, we check whether or not $D(\bfq\circ \varphi)(D(\bfp))^{-1}$ is constant: in the affirmative case, $M=D(\bfq\circ \varphi)(D(\bfp))^{-1}$ defines a projectivity between the curves. For this last part, it is computationally cheaper to compute $D((\bfq\circ \varphi)(t_0))(D(\bfp(t_0)))^{-1}$ for some $t_0\in {\Bbb R}$, and then check whether or not $M\cdot\bfp=\bfq\circ \varphi$ holds. 

Therefore, we get the following algorithm, {\tt Prj3D}, to check whether or not two given rational curves are projectively equivalent. In order to execute the algorithm, we need that not both $\kappa_1,\kappa_2$ are constant. We conjecture that the space curves with both $\kappa_i$ constant may be related to $W$-curves \citep{Sasaki1937}, but at this point we must leave this case out of our study. 

\begin{algorithm}[H]
\caption*{\textbf{Algorithm} $Prj3D$}
\textbf{Input:} \textit{Two proper parametrizations $\bm{p}$ and $\bm{q}$ in homogeneous coordinates such that not both projective curvatures $\kappa_1$, $\kappa_2$ are constant} \\
\textbf{Output:} \textit{Either the list of M\"obius transformations and projectivities, or the warning: "The curves are not projectively equivalent"}

\begin{algorithmic}[1]
    \Procedure{$Prj3D$}{$\bm{p},\bm{q}$}
    
    \State {Compute the set of factors $FS$ of the polynomial $G$ that is defined at \eqref{eq51}.} \Comment{Here we use the {\tt AFactor} function}
    
    \State Check $FS$ to find the set $MF$ of M\"obius-like factors
    
    \If{$MF=\emptyset$}
    	\Return "The curves are not projectively equivalent."
    	\Else 
    	\State Compute the set $MS$ of M\"obius transformations corresponding to $MF$
			\For{$\varphi\in MF$}
			\State Check if $D(\bm{q}(\varphi))D(\bm{p})^{-1}$ is a constant matrix $M$.
			\State In the affirmative case, {\bf return} the projectivity defined by $M$, and the corresponding $\varphi$.
    	\EndFor
    \EndIf
   \EndProcedure
\end{algorithmic}
\end{algorithm}

\begin{color}{black}
\begin{remark}\label{rem-multiples}
Notice that M\"obius-like factors are computed up to a constant, which is coherent with the fact that $\varphi$, which is a projective transformation, is also defined up to a constant. Because of this, the matrix $M$ derived by the algorithm {\tt Prj3D} is also defined up to a constant, i.e. if $F$ is a M\"obius-like factor, which in turn provides a matrix $M$, by picking $\lambda F$ with $\lambda\neq 1$ instead, in general we obtain a multiple $\mu M$ of $M$. However, both $M,\mu M$ define the same projectivity. 
\end{remark}
\end{color}

Below we provide a detailed example to illustrate the steps of the method.

\begin{example}
Consider the curves given by the rational parametrizations
\small
\begin{equation*}
\bm{p}(t_0,t_1)=\begin{pmatrix}
(t_0 - t_1)^4 + 16t_0^4 - 8t_0^3(t_0 - t_1) + 4t_0^2(t_0 - t_1)^2 \\
4t_0^2(t_0 - t_1)^2 \\
8t_0^3(t_0 - t_1) \\
2t_0(t_0 - t_1)((t_0 - t_1)^2 + 4t_0^2)
\end{pmatrix},\quad \bm{q}(t_0,t_1)=\begin{pmatrix}
(t_0 - t_1)^4 + 16t_0^4 \\
2t_0(t_0 - t_1)((t_0 - t_1)^2 + 4t_0^2) \\
2(t_0 - t_1)^3t_0 \\
4t_0^2(t_0 - t_1)^2 
\end{pmatrix}.
\end{equation*}
\normalsize

\noindent The projective curvatures are, in this case, 
\small
\begin{align*}
\kappa_1(\bm{p})(t_0,t_1)&=\frac{\left(17 t_0^{4}-4 t_0^{3} {t_1}+6 t_0^{2} t_1^{2}-4 {t_0} \,t_1^{3}+t_1^{4}\right)^{2}}{384 t_0^{4} \left({t_0}-{t_1}\right)^{2} \left(t_0^{2}-2 {t_0} {t_1}+t_1^{2}\right)} \\
\kappa_2(\bm{p})(t_0,t_1)&=\frac{273 t_0^{8}-72 t_0^{7} {t_1}+124 t_0^{6} t_1^{2}-120 t_0^{5} t_1^{3}+86 t_0^{4} t_1^{4}-56 t_0^{3} t_1^{5}+28 t_0^{2} t_1^{6}-8 {t_0} \,t_1^{7}+t_1^{8}}{96 \left(t_0^{2}-2 {t_0} {t_1}+t_1^{2}\right)^{2} t_0^{4}} 
\end{align*}
\normalsize

\noindent Thus we get
\begin{align*}
E_1&=384 \left(17 t_0^{4}-4 t_0^{3} {t_1}+6 t_0^{2} t_1^{2}-4 {t_0} \,t_1^{3}+t_1^{4}\right)^{2} u^{4} \left(-v+u\right)^{2} \left(u^{2}-2 u v+v^{2}\right)\\
&-384 t_0^{4} \left({t_0}-{t_1}\right)^{2} \left(t_0^{2}-2 {t_0} {t_1}+t_1^{2}\right) \left(17 u^{4}-4 u^{3} v+6 u^{2} v^{2}-4 u \,v^{3}+v^{4}\right)^{2} \\
E_2&=96 \left(273 t_0^{8}-72 t_0^{7} {t_1}+124 t_0^{6} t_1^{2}-120 t_0^{5} t_1^{3}+86 t_0^{4} t_1^{4}-56 t_0^{3} t_1^{5}+28 t_0^{2} t_1^{6}-8 {t_0} \,t_1^{7}+t_1^{8}\right)\\
& \left(u^{2}-2 u v+v^{2}\right)^{2} u^{4}-96 \left(t_0^{2}-2 {t_0} {t_1}+t_1^{2}\right)^{2} t_0^{4}\\
& \left(273 u^{8}-72 u^{7} v+124 u^{6} v^{2}-120 u^{5} v^{3}+86 u^{4} v^{4}-56 u^{3} v^{5}+28 u^{2} v^{6}-8 u \,v^{7}+v^{8}\right)
\end{align*}

\noindent The computation of $G=\gcd(E_1,E_2)$ yields
\begin{align*}
G(t_0,t_1,u,v)&=\left({t_0} v-u {t_1}\right) \left(3 {t_0} u+{t_0} v+u {t_1}-{t_1} v\right) \left(5 {t_0} u-{t_0} v-u {t_1}+{t_1} v\right) \left(2 {t_0} u-{t_0} v-u {t_1}\right) \\
&\left(2 t_0^{2} u^{2}-2 t_0^{2} u v+t_0^{2} v^{2}-2 u^{2} {t_0} {t_1}+u^{2} t_1^{2}\right) \\
&\left(17 t_0^{2} u^{2}-2 t_0^{2} u v+t_0^{2} v^{2}-2 u^{2} {t_0} {t_1}+4 {t_0} {t_1} u v-2 {t_0} {t_1} \,v^{2}+u^{2} t_1^{2}-2 t_1^{2} u v+t_1^{2} v^{2}\right)
\end{align*}

\noindent Factoring $G$, we get the following M\"obius-like factors:

\begin{align*}
f_1&={t_0} u-\frac{1}{2} {t_0} v-\frac{1}{2} {t_1} u \\
f_2&={t_0} u+\frac{1}{3} {t_0} v+\frac{1}{3} {t_1} u-\frac{1}{3} {t_1} v \\
f_3&={t_0} v-{t_1} u \\
f_4&={t_0} u-\frac{1}{5} {t_0} v-\frac{1}{5} {t_1} u+\frac{1}{5} {t_1} v,
\end{align*}
which correspond to the following four M\"obius transformations
\begin{align*}
\varphi_1(t_0,t_1)&=(t_0,2t_0-t_1),&
\varphi_2(t_0,t_1)&=(-t_0+t_1,3t_0+t_1), \\
\varphi_3(t_0,t_1)&=(t_0,t_1),&
\varphi_4(t_0,t_1)&=(t_0-t_1,5t_0-t_1).
\end{align*}

For $i\in\{1,2,3,4\}$, the product $D(q(\varphi_i))D(p)^{-1}$ yields a constant matrix $M_i$, so we get four projectivities $f(\bfx)=M_i\cdot\bfx$ between the curves defined by $\bm{p}$ and $\bm{q}$ corresponding to
\begin{align*}
M_1=\begin{pmatrix}
1 & -1 & 1 & 0 \\
0 & 0 & 0 & -1 \\
0 & 0 & 1 & -1 \\
0 & 1 & 0 & 0
\end{pmatrix}, &\quad M_2=\begin{pmatrix}
16 & -16 & 16 & 0 \\
0 & 0 & 0 & 16 \\
0 & 0 & 16 & 0 \\
0 & 16 & 0 & 0
\end{pmatrix} \\
M_3=\begin{pmatrix}
1 & -1 & 1 & 0 \\
0 & 0 & 0 & 1 \\
0 & 0 & -1 & 1 \\
0 & 1 & 0 & 0
\end{pmatrix}, &\quad M_4=\begin{pmatrix}
16 & -16 & 16 & 0 \\
0 & 0 & 0 & -16 \\
0 & 0 & -16 & 0 \\
0 & 16 & 0 & 0
\end{pmatrix}.
\end{align*}
\end{example}

\section{Implementation and Performance.}\label{Imp}

The algorithm \texttt{Prj3D} was implemented in the computer algebra system \citet{maple}, and was tested on a PC with a $3.6$ GHz Intel Core i$7$ processor and $32$ GB RAM. In order to factor the gcd we use \citet{maple}'s \texttt{AFactors} function, since in general we want to factor over the reals. We want to explicitly mention that the \citet{maple} command {\tt AFactors} works very well in practice. In fact, in our experimentation we observed that most of the time is spent computing the $\gcd$ of the polynomials $E_1$ and $E_2$. Technical details, examples and source codes of the procedures are provided in the first author's personal website \citep{website}.

In this section, first we provide tables and examples to compare the performance of our algorithm with the algorithms in \citep{Hauer201868,BIZZARRI2020112438}. Then we provide a more detailed analysis of our own implementation. 

We recall that the bitsize $\tau$ of an integer $k$ is the integer $\tau=\lceil log_2k\rceil+1$. If the bitsize of an integer is $\tau$, then the number of digits of the integer could be calculated by the formula $d=\lceil log\tau\rceil +1$, where $d$ is the number of digits. 
By an abuse of notation, in this section we have used $\tau$ for representing the maximum bitsize of the coefficients of the components of the parametrization corresponding to a curve.

\subsection{Comparison of the Results.}\label{subsec6.1}

To the best of our knowledge, there are two simple and efficient algorithms to detect the projective equivalences of $3D$ rational curves \citep{Hauer201868,BIZZARRI2020112438}. Although their methods differ, in both cases the authors rely on Gr\"obner bases to solve a polynomial system on the coefficients of the M\"obius transformations corresponding to the equivalences. Thus, in both methods most of the time is spent computing the Gr\"obner basis of the system, which is considerably large. In contrast, our method does not require to solve any polynomial system. Instead, our algorithm computes the M\"obius-like factors by factoring a polynomial of small degree, compared to the degrees in the polynomials involved in the methods \citep{Hauer201868,BIZZARRI2020112438}. The reason is that the polynomial that we need to factor is a gcd of two polynomials where the projective curvatures $\kappa_1$ and $\kappa_2$ are involved. 

In order to compare our results with those in \citep{Hauer201868,BIZZARRI2020112438}, we provide two tables, Table \ref{table3} and Table \ref{table4}, with the timing $t_h$ corresponding to the so-called ``reduced method" in \citep{Hauer201868}, the timing $t_b$ corresponding to \citet{BIZZARRI2020112438}, and the timing $t_{\mbox{our}}$ corresponding to our algorithm. We consider both projective equivalences and symmetries. Since \citet{BIZZARRI2020112438} provide no implementation or tests in their paper, we implemented this algorithm in \citet{maple} to compare with our own, and the timings $t_b$ we are including are the timings obtained with this implementation. For \citep{Hauer201868} we just reproduce the timings in their paper, taking into accout that the machine in \citep{Hauer201868} is similar to ours. We understand that the comparison is unfair because \citep{Hauer201868} uses {\tt Singular} to compute Gr\"obner bases, but perhaps this same fact, i.e. not using the power of {\tt Singular}, that we do not need because we do not compute any Gr\"obner basis, may partially compensate for this unfairness. The results in Table \ref{table3} and Table \ref{table4} show that as the degree of the parametrizations grow, the timings for our algorithm grow much less that the timings for \citep{Hauer201868,BIZZARRI2020112438}, in accordance with the fact that Gr\"obner bases have an exponential complexity. 

Let us present the results corresponding to Table \ref{table3}. The parametrizations used in this table are given in Table \ref{table2}; the first three are taken from \citep{Hauer201868}. Here we have highlighted in blue the best timing for each example. One may notice that our method always beats \citet{BIZZARRI2020112438}, while \citep{Hauer201868} is better for the first two examples, of small degree.

\begin{table}[H]
\centering
\begin{tabular}{l l}
Degree & Parametrization \\
\hline \\
$4$ & $\scalemath{0.8}{\left(\begin{array}{c}
t_0^{4}+t_1^{4} 
\\
 t_0^{3} t_1 +t_0 \,t_1^{3} 
\\
 t_0 \,t_1^{3} 
\\
 t_0^{2} t_1^{2} 
\end{array}\right)}$\\ \\
$6$ & $\scalemath{0.8}{\left(\begin{array}{c}
125 t_0^{6}+450 t_0^{5} t_1 +690 t_0^{4} t_1^{2}+576 t_0^{3} t_1^{3}+276 t_0^{2} t_1^{4}+72 t_0 \,t_1^{5}+8 t_1^{6} 
\\
 -27 t_0^{6}-54 t_0^{5} t_1 -36 t_0^{4} t_1^{2}-8 t_0^{3} t_1^{3} 
\\
 64 t_0^{6}+288 t_0^{5} t_1 +528 t_0^{4} t_1^{2}+504 t_0^{3} t_1^{3}+264 t_0^{2} t_1^{4}+72 {t_0} \,t_1^{5}+8 t_1^{6} 
\\
 21 t_0^{6}+122 t_0^{5} {t_1} +216 t_0^{4} t_1^{2}+168 t_0^{3} t_1^{3}+60 t_0^{2} t_1^{4}+8 {t_0} \,t_1^{5} 
\end{array}\right)}$ \\ \\
$8$ & $\scalemath{0.8}{\left(\begin{array}{c}
625 t_0^{8}+3000 t_0^{7} {t_1} +6400 t_0^{6} t_1^{2}+7920 t_0^{5} t_1^{3}+6216 t_0^{4} t_1^{4}+3168 t_0^{3} t_1^{5}+1024 t_0^{2} t_1^{6}+192 {t_0} \,t_1^{7}+16 t_1^{8} 
\\
 -2027 t_0^{8}-8392 t_0^{7} {t_1} -14344 t_0^{6} t_1^{2}-12768 t_0^{5} t_1^{3}-5960 t_0^{4} t_1^{4}-1056 t_0^{3} t_1^{5}+224 t_0^{2} t_1^{6}+128 {t_0} \,t_1^{7}+16 t_1^{8} 
\\
 1664 t_0^{8}+7744 t_0^{7} {t_1} +16288 t_0^{6} t_1^{2}+20528 t_0^{5} t_1^{3}+17040 t_0^{4} t_1^{4}+9472 t_0^{3} t_1^{5}+3392 t_0^{2} t_1^{6}+704 {t_0} \,t_1^{7}+64 t_1^{8} 
\\
 405 t_0^{8}+1080 t_0^{7} {t_1} +1080 t_0^{6} t_1^{2}+480 t_0^{5} t_1^{3}+80 t_0^{4} t_1^{4} 
\end{array}\right)}$\\ \\
$9$ & $\scalemath{0.8}{\left(\begin{array}{c}
t_0^{9} 
\\
 t_1^{9} 
\\
 t_0^{8} {t_1} +t_0^{6} t_1^{3} 
\\
 t_0^{6} t_1^{3}+t_0^{4} t_1^{5} 
\end{array}\right)}$ \\ \\
$10$ & $\scalemath{0.8}{\left(\begin{array}{c}
49 t_0^{10}-22 t_0^{9} {t_1} +87 t_0^{8} t_1^{2}+84 t_0^{7} t_1^{3}+75 t_0^{6} t_1^{4}-96 t_0^{5} t_1^{5}-28 t_0^{4} t_1^{6}-76 t_0^{3} t_1^{7}-36 t_0^{2} t_1^{8}-55 {t_0} \,t_1^{9}+27 t_1^{10} 
\\
 97 t_0^{10}-97 t_0^{9} {t_1} -73 t_0^{8} t_1^{2}+57 t_0^{7} t_1^{3}+73 t_0^{6} t_1^{4}+64 t_0^{5} t_1^{5}-20 t_0^{4} t_1^{6}+85 t_0^{3} t_1^{7}+99 t_0^{2} t_1^{8}+57 {t_0} \,t_1^{9}+96 t_1^{10} 
\\
 74 t_0^{10}-69 t_0^{9} {t_1} -9 t_0^{8} t_1^{2}+47 t_0^{7} t_1^{3}+44 t_0^{6} t_1^{4}-62 t_0^{5} t_1^{5}+8 t_0^{4} t_1^{6}-84 t_0^{3} t_1^{7}+38 t_0^{2} t_1^{8}-{t_0} \,t_1^{9}+55 t_1^{10} 
\\
-35 t_0^{10}-35 t_0^{9} {t_1} +63 t_0^{8} t_1^{2}+41 t_0^{7} t_1^{3}+16 t_0^{6} t_1^{4}-77 t_0^{5} t_1^{5}+76 t_0^{4} t_1^{6}+95 t_0^{3} t_1^{7}+56 t_0^{2} t_1^{8}-16 {t_0} \,t_1^{9}-95 t_1^{10}\end{array}\right)}$ \\ \\
$11$ & $\scalemath{0.8}{\left(\begin{array}{c}
-62 t_0^{11}-16 t_0^{10} {t_1} +68 t_0^{9} t_1^{2}-15 t_0^{8} t_1^{3}-31 t_0^{7} t_1^{4}+62 t_0^{6} t_1^{5}-14 t_0^{5} t_1^{6}+67 t_0^{4} t_1^{7}+49 t_0^{3} t_1^{8}+52 t_0^{2} t_1^{9}-20 {t_0} \,t_1^{10}-74 t_1^{11} 
\\
 -19 t_0^{11}-68 t_0^{10} {t_1} -48 t_0^{9} t_1^{2}+45 t_0^{8} t_1^{3}+59 t_0^{7} t_1^{4}-96 t_0^{6} t_1^{5}-6 t_0^{5} t_1^{6}+89 t_0^{4} t_1^{7}+41 t_0^{3} t_1^{8}+20 t_0^{2} t_1^{9}+
25 {t_0} \,t_1^{10} 
\\
 -80 t_0^{11}+42 t_0^{10} {t_1} -67 t_0^{9} t_1^{2}+63 t_0^{8} t_1^{3}-81 t_0^{7} t_1^{4}+76 t_0^{6} t_1^{5}-44 t_0^{5} t_1^{6}-59 t_0^{4} t_1^{7}-11 t_0^{3} t_1^{8}-75 t_0^{2} t_1^{9}-84 {t_0} \,t_1^{10}+47 t_1^{11} 
\\
 -27 t_0^{11}-34 t_0^{10} {t_1} +96 t_0^{9} t_1^{2}+82 t_0^{8} t_1^{3}-58 t_0^{7} t_1^{4}+59 t_0^{6} t_1^{5}+36 t_0^{5} t_1^{6}+33 t_0^{4} t_1^{7}+35 t_0^{3} t_1^{8}+27 t_0^{2} t_1^{9}+46 {t_0} \,t_1^{10}+19 t_1^{11} 
\end{array}\right)}$ \\ \\
$12$ & $\scalemath{0.8}{\left(\begin{array}{c}
-62 t_0^{12}-26 t_0^{11} {t_1} +46 t_0^{10} t_1^{2}+65 t_0^{9} t_1^{3}-51 t_0^{8} t_1^{4}+60 t_0^{7} t_1^{5}-56 t_0^{6} t_1^{6}-46 t_0^{5} t_1^{7}+86 t_0^{4} t_1^{8}-31 t_0^{3} t_1^{9}+84 t_0^{2} t_1^{10}+5 {t_0} \,t_1^{11}+25 t_1^{12} 
\\
 -17 t_0^{12}+79 t_0^{11} {t_1} +73 t_0^{10} t_1^{2}-78 t_0^{9} t_1^{3}+13 t_0^{8} t_1^{4}+93 t_0^{7} t_1^{5}+64 t_0^{6} t_1^{6}-70 t_0^{5} t_1^{7}-71 t_0^{4} t_1^{8}-51 t_0^{3} t_1^{9}-71 t_0^{2} t_1^{10}+10 {t_0}t_1^{11} 
\\
 -76 t_0^{12}-25 t_0^{11} {t_1} +38 t_0^{10} t_1^{2}+89 t_0^{9} t_1^{3}-92 t_0^{8} t_1^{4}-84 t_0^{7} t_1^{5}-77 t_0^{6} t_1^{6}-34 t_0^{5} t_1^{7}-20 t_0^{4} t_1^{8}+73 t_0^{3} t_1^{9}-94 t_0^{2} t_1^{10}+99 {t_0}t_1^{11}+18 t_1^{12} 
\\
 39 t_0^{12}-77 t_0^{11} {t_1} -70 t_0^{10} t_1^{2}-49 t_0^{9} t_1^{3}-46 t_0^{8} t_1^{4}+34 t_0^{7} t_1^{5}-84 t_0^{6} t_1^{6}+98 t_0^{5} t_1^{7}+41 t_0^{4} t_1^{8}-46 t_0^{3} t_1^{9}+13 t_0^{2} t_1^{10}-3 {t_0} t_1^{11}+8 t_1^{12} 
\end{array}\right)}$ 
\end{tabular}
\caption{Parametrizations of the curves considered in Section \ref{subsec6.1}}
\label{table2}
\end{table}


\begin{table}[H]
\centering
\begin{tabular}{r r r r r r r r}
\hline
 & \multicolumn{1}{c}{$\sharp$ of} &\multicolumn{1}{c}{$t_b$} & \multicolumn{1}{c}{$t_b$} & \multicolumn{1}{c}{$t_h$}  & \multicolumn{1}{c}{$t_h$} & \multicolumn{1}{c}{$t_{\mbox{our}}$} & \multicolumn{1}{c}{$t_{\mbox{our}}$} \\
  Deg.  & Eqvl. & Symm.  & Eqvl. & Symm. & Eqvl & Symm. & Eqvl\\
\hline
$4$  &  $4$ & $0.344$ & $0.703$ &  \textcolor{blue}{$0.01$}     &  \textcolor{blue}{$0.01$}  &  $0.078$  & $0.219$    \\ 
$6$  &  $4$  & $1.391$ & $2.547$ & \textcolor{blue}{$0.06$}     &  \textcolor{blue}{$0.02$}  &  $0.078$  & $0.172$     \\
$8$  &  $2$  & $3.094$ & $ 2.500$ & $37$       &  $0.78$  &  \textcolor{blue}{$0.063$}  & \textcolor{blue}{$0.188$}      \\
$9$  &  $2$  &   $1.140$  & $1.000$ &        &          &  \textcolor{blue}{$0.016$}  & \textcolor{blue}{$0.031$}       \\
$10$ &  $1$  &  $14.750$  & $10.000$  &          &  		  &	 \textcolor{blue}{$0.422$}  &	\textcolor{blue}{$0.375$}        \\
$11$ &  $1$  &  $31.625$ & $21.172$ &   &  		  &	 \textcolor{blue}{$0.421$}  &	\textcolor{blue}{$0.547$}		    \\
$12$ &  $1$  &  $40.313$  & $41.437$ &         &   		  &	 \textcolor{blue}{$0.625$}  &	\textcolor{blue}{$0.531$}		     \\
\hline
\end{tabular}
\caption{CPU time in seconds for projective symmetries and equivalences for the curves represented by the parametrizations in Table \ref{table2}}
\label{table3}
\end{table}

Now let us introduce Table \ref{table4}. In this table we test random curves with a fixed bitsize $3<\tau<4$ (coefficients are ranges between $-10$ and $10$) as in \citep{Hauer201868}. The first six examples are taken from \citep{Hauer201868}. Again we have highlighted in blue the best timing {among} the methods in \citep{BIZZARRI2020112438}, \citep{Hauer201868} and ours. Our method is only beaten in the first example, of degree 4. For higher degrees not only our algorithm is better, but the growing of the timings is much slower. 

\begin{table}[H]
\centering
\begin{tabular}{r r r r r r r}
\hline
 &  \multicolumn{1}{c}{$t_b$}   &   \multicolumn{1}{c}{$t_b$}    & \multicolumn{1}{c}{$t_h$}  & \multicolumn{1}{c}{$t_h$} & \multicolumn{1}{c}{$t_{\mbox{our}}$} & \multicolumn{1}{c}{$t_{\mbox{our}}$} \\
 Deg.     & Symm. & Eqvl.  & Symm. & Eqvl. & Symm. & Eqvl.\\
\hline
$4$    & $0.390$ & \textcolor{blue}{$0.400$} & \textcolor{blue}{$0.04$}     &  \textcolor{blue}{$0.4$}  &  $0.687$  & $0.860$    \\ 
$5$    & $0.110$ &$0.172$ &  $1$        &  $1.6$  &  \textcolor{blue}{$0.015$}  & \textcolor{blue}{$0.016$}     \\
$6$    &$0.234$ &$0.359$ &  $8.4$      &  $1.2$  &  \textcolor{blue}{$0.047$}  & \textcolor{blue}{$0.031$}      \\
$7$    &$0.610$ &$1.047$ &  $37$       &  $8.6$  &  \textcolor{blue}{$0.187$}  & \textcolor{blue}{$0.063$}       \\
$8$    &$1.579$ &$2.546$ &  $150$      &  $310$  &  \textcolor{blue}{$0.125$}  & \textcolor{blue}{$0.110$}        \\
$9$    &$4.844$ &$4.969$ &  $670$      &  $1700$ &  \textcolor{blue}{$0.297$}  & \textcolor{blue}{$0.343$}		    \\
$10$   &  $10.439$   &$10.484$ &        &         &  \textcolor{blue}{$0.496$}  & \textcolor{blue}{$0.391$}		     \\
$11$   & $22.265$     &$22.438$ &       &     	   &  \textcolor{blue}{$0.625$}  & \textcolor{blue}{$0.453$}		     \\
$12$   &   $42.625$   &$42.797$ &       &   	   &  \textcolor{blue}{$0.906$}  & \textcolor{blue}{$0.547$}		     \\
\hline
\end{tabular}
\caption{CPU time in seconds for projective equivalences and symmetries of random curves with fixed bitsize ($3<\tau<4$)}
\label{table4}
\end{table}

\subsection{Further Tests.}

The tables given in this subsection are provided to better understand the performance of our method and to assist performance testing of similar studies in the future. These tables list timings for homogeneous curve parametrizations with various degrees $m$ and coefficients with bitsizes at most $\tau$. 

\subsubsection{Projective Equivalences and Symmetries of Random Curves.}


In order to generate projectively equivalent curves, we apply the following non-singular matrix and M\"obius transformation to a random parametrization $\bm{q}$ of degree $n$ and bitsize $\tau$. 

\begin{equation*}
M=\begin{pmatrix}
1 & -1 & 1 & 0 \\
0 & 0 & 0 & -1 \\
0 & 0 & -1 & 0 \\
0 & 1 & 0 & 0
\end{pmatrix},\quad \varphi(t_0,t_1)=(-t_0+t_1,2t_0).
\end{equation*}

\noindent Thus, taking $\bm{p}=M\bm{q}(\varphi)$, we run $\texttt{Prj3D}(\bm{p},\bm{q})$ to get the results for projective equivalences, shown in Table \ref{table5}, and $\texttt{Prj3D}(\bm{q},\bm{q})$ for the results in Table \ref{table6} (symmetries); since $\bm{q}$ is randomly generated, in general only the trivial symmetry is expected. Looking at Table \ref{table5} and Table \ref{table6} one observes a smooth increase in the timings for $n\geq 5$; however $n=4$ has, comparatively, higher timings because for degree four curves the homogeneous polynomials $E_1$ and $E_2$ have more redundant common factors {compared to} higher degrees. 

\begin{table}[H]
\centering
\begin{tabular}{r r r r r r r r}
\hline
$t$  & $\tau=4$ & $\tau=8$ & $\tau=16$ & $\tau=32$ & $\tau=64$ & $\tau=128$ & $\tau=256$ \\
\hline
$4$  & $0.703$  & $0.641$  &  $1.500$  & $3.140$   & $4.640$   & $10.281$   & $85.989$  \\ 
$6$  & $0.062$  & $0.062$  &  $0.047$  & $0.063$   & $0.110$   & $0.203$    & $0.531$   \\
$8$  & $0.109$  & $0.125$  &  $0.140$  & $0.172$   & $0.969$   & $1.469$    & $3.578$   \\
$10$ & $0.343$  & $0.531$  &  $0.250$  & $0.344$   & $1.000$   & $2.203$    & $6.000$   \\
$12$ & $0.641$  & $0.718$  &  $0.891$  & $0.860$   & $2.063$   & $3.078$    & $10.719$   \\
$14$ & $0.890$  & $1.188$  &  $1.313$  & $1.641$   & $2.922$   & $5.719$	& $15.704$   \\
$16$ & $1.218$  & $1.172$  &  $1.593$  & $1.875$   & $3.437$   & $7.484$ 	& $23.828$   \\
$18$ & $1.797$  & $1.844$  &  $2.313$  & $2.688$   & $5.656$   & $9.890$	& $32.985$   \\
$20$ & $2.344$  & $2.125$  &  $3.281$  & $4.219$   & $7.297$   & $14.203$	& $46.282$  \\
$22$ & $2.985$  & $3.609$  &  $4.203$  & $5.391$   & $8.781$  & $18.062$	& $65.000$  \\
$24$ & $4.125$ & $4.672$  &  $4.859$  & $6.344$   & $11.110$  & $20.954$	& $74.766$  \\
\hline
\end{tabular}
\caption{CPU times in seconds for projective equivalences of
random curves with various degrees $m$ and bitsizes at most $\tau$}
\label{table5}
\end{table}

\begin{table}[H]
\centering
\begin{tabular}{r r r r r r r r}
\hline
$t$  & $\tau=4$ & $\tau=8$ & $\tau=16$ & $\tau=32$ & $\tau=64$ & $\tau=128$ & $\tau=256$ \\
\hline
$4$  & $0.688$  & $1.438$  &  $0.797$  & $2.078$   & $3.125$   & $9.891$   & $219.750$  \\ 
$6$  & $0.110$  & $0.016$  &  $0.047$  & $0.062$   & $0.093$   & $0.172$    & $0.531$   \\
$8$  & $0.110$  & $0.109$  &  $0.438$  & $0.625$   & $0.250$   & $0.593$    & $2.344$   \\
$10$ & $0.344$  & $0.235$  &  $0.562$  & $0.781$   & $1.047$   & $2.297$    & $5.203$   \\
$12$ & $0.547$  & $0.750$  &  $0.812$  & $1.609$   & $2.281$   & $3.547$    & $9.281$   \\
$14$ & $0.688$  & $0.922$  &  $1.672$  & $1.546$   & $3.297$   & $5.172$	& $17.531$   \\
$16$ & $1.297$  & $1.609$  &  $1.828$  & $2.672$   & $5.219$   & $7.360$ 	& $22.156$   \\
$18$ & $2.047$  & $1.750$  &  $2.156$  & $3.281$   & $6.797$   & $10.907$	& $34.718$   \\
$20$ & $2.562$  & $2.281$  &  $3.516$  & $4.687$  & $8.656$  & $13.906$	& $45.093$  \\
$22$ & $3.375$  & $3.469$ &  $4.735$  & $5.500$  & $11.609$  & $16.859$	& $57.500$  \\
$24$ & $4.093$ & $4.703$ &  $5.391$ & $7.375$  & $12.781$  & $22.343$	& $75.469$  \\
\hline
\end{tabular}
\caption{CPU times in seconds for projective symmetries (only trivial symmetry) of
random curves with various degrees $m$ and bitsizes at most $\tau$}
\label{table6}
\end{table}

\subsubsection{Projective Symmetries of Random Curves with Central Inversion.}

To analyze the effect of an additional non-trivial symmetry, we considered random parametrizations $\bm{p}(t_0,t_1)=(\bm{p}_0(t_0,t_1),\bm{p}_1(t_0,t_1),\bm{p}_2(t_0,t_1),
\bm{p}_3(t_0,t_1))$ with a symmetric $\bm{p}_0(t_0,t_1)$ and an anti-symmetric triple $\bm{p}_1(t_0,t_1)$, $\bm{p}_2(t_0,t_1)$ and $\bm{p}_3(t_0,t_1)$ of the same even-degree $m$ and with bitsize at most $\tau$, i.e. of the form
\begin{align*}
\bm{p}_0(t_0,t_1)& =c_{0,0}t_0^m+c_{1,0}t_0^{m-1}t_1+...+c_{1,0}t_0t_1^{m-1}+c_{0,0}t_1^m \\
\bm{p}_i(t_0,t_1)& =c_{0,i}t_0^m+c_{1,i}t_0^{m-1}t_1+...-c_{1,i}t_0t_1^{m-1}-c_{0,i}t_1^m ,
\end{align*}
with $c_{\frac{m}{2},i}=0$ for all $i\in\{1,2,3\}$. Since $\bm{p}(t_1,t_0)=(\bm{p}_0(t_0,t_1),-\bm{p}_1(t_0,t_1),-\bm{p}_2(t_0,t_1),-\bm{p}_3(t_0,t_1))$, such homogeneous parametric curves are invariant under a central inversion with respect to the origin.

Table \ref{table7} lists the timings to detect projective symmetries (central inversions, in this case) of random curves with various degrees $m$ and bitsizes at most $\tau$. As expected, one can see that the computation times remain within the same magnitude order with respect to previous tables. 

\begin{table}[H]
\centering
\begin{tabular}{r r r r r r r r}
\hline
$t$  & $\tau=4$ & $\tau=8$ & $\tau=16$ & $\tau=32$ & $\tau=64$ & $\tau=128$ & $\tau=256$ \\
\hline
$8$  & $0.078$  & $0.078$  &  $0.078$  & $0.093$   & $0.141$   & $0.500$    & $1.109$   \\
$10$ & $0.234$  & $0.172$  &  $0.313$  & $0.453$   & $0.781$   & $1.609$    & $4.516$   \\
$12$ & $0.360$  & $0.516$  &  $0.531$  & $0.704$   & $1.282$   & $3.031$    & $8.140$   \\
$14$ & $0.625$  & $0.812$  &  $0.703$  & $1.078$   & $2.062$   & $5.000$	& $13.032$   \\
$16$ & $0.921$  & $1.047$  &  $1.203$  & $1.937$   & $3.125$   & $7.172$ 	& $20.109$   \\
$18$ & $1.329$  & $1.250$  &  $1.578$  & $2.516$   & $4.969$   & $9.141$	& $28.922$   \\
$20$ & $1.765$  & $1.922$  &  $2.282$  & $3.390$   & $5.594$   & $14.000$	& $39.125$  \\
\hline
\end{tabular}
\caption{CPU times in seconds for projective symmetries (central inversion) of
random curves with various degrees $m$ and bitsizes at most $\tau$}
\label{table7}
\end{table}

\subsubsection{Projective Equivalences of Non-equivalent Curves.}

In the last table that we present here, Table \ref{table8}, we generate both curves randomly, so in general no projective equivalence is expected. Table \ref{table8} shows the computation times for non-equivalent random curves with various degrees $m$ and bitsizes at most $\tau$. As expected, the timings are faster than those of Table \ref{table5}, Table \ref{table6}, Table \ref{table7}. The reason is that in most cases the gcd $G$ is constant and therefore the algorithm finishes earlier. 
  
\begin{table}[H]
\centering
\begin{tabular}{r r r r r r r r}
\hline
$t$  & $\tau=4$ & $\tau=8$ & $\tau=16$ & $\tau=32$ & $\tau=64$ & $\tau=128$ & $\tau=256$ \\
\hline
$4$  & $0.016$  & $0.015$  &  $0.016$  & $0.015$   & $0.015$   & $0.015$    & $0.015$   \\
$6$  & $0.094$  & $0.329$  &  $0.031$  & $0.047$   & $0.047$   & $0.046$    & $0.688$   \\
$8$  & $0.062$  & $0.062$  &  $0.078$  & $0.094$   & $0.110$   & $0.829$    & $0.328$   \\
$10$ & $0.313$  & $0.141$  &  $0.157$  & $0.453$   & $0.796$   & $0.328$    & $0.656$   \\
$12$ & $0.281$  & $0.250$  &  $0.718$  & $0.297$   & $0.391$   & $0.937$    & $1.343$   \\
$14$ & $0.547$  & $0.625$  &  $0.391$  & $0.703$   & $0.953$   & $1.344$	& $2.500$   \\
$16$ & $0.922$  & $0.547$  &  $0.969$  & $0.609$   & $1.031$   & $1.937$ 	& $3.595$   \\
$18$ & $1.062$  & $1.046$  &  $1.047$  & $1.313$   & $1.500$   & $2.922$	& $4.266$   \\
$20$ & $1.438$  & $1.375$  &  $1.312$  & $1.890$   & $2.344$   & $3.594$	& $6.359$  \\
$22$ & $2.109$  & $1.704$  &  $1.609$  & $2.187$   & $3.219$   & $4.453$	& $8.891$   \\
$24$ & $1.719$  & $2.343$  &  $2.391$  & $2.782$   & $4.234$   & $6.735$	& $11.078$  \\
\hline
\hline
\end{tabular}
\caption{CPU times in seconds for non-equivalent random curves with various degrees $m$ and bitsizes at most $\tau$}
\label{table8}
\end{table}

\subsubsection{Effect of the Bitsize and Degree on the Algorithm.}

Our implementation can deal with curves of degree $24$ and bitsize $256$ at the same time. When we attempt to solve the problem for higher degrees and bitsizes at the same time, the computer runs out of memory. However, by fixing either the bitsize or the degree we are able to go further and explore the limits of the method. Here we present the results of two different tests on random homogeneous parametrizations, one for a fixed bitsize and one for a fixed degree. In these tests the second parametrization is obtained by applying a projective transformation and a M\"obius transformation to the first, random, parametrization. For the first test we fix the bitsize at $4$, and increase the degree up to $128$; for the second test, we fix the degree at $8$, and increase the bitsize up to to $2^{12}$. The results are shown in Figure \ref{fig1}; Figure \ref{left} exhibits log plots of CPU times against the degree, and Figure \ref{right} exhibits non-log plots of CPU times against the coefficient bitsizes. The data were analysed using the \texttt{PowerFit} function of the \texttt{Statistics} package of \citet{maple}. Thus, as a function of the degree $m$, the CPU time $t$ satisfies 
\begin{equation}\label{eq64}
t \sim \alpha m^\beta, \;\;\; \alpha \approx 2.0*10^{-4}, \;\;\; \beta \approx 3.1,
\end{equation} 
and as a function of the bitsize $\tau$, the CPU time $t$ satisfies
\begin{equation}\label{eq65}
t \sim \alpha \tau^\beta, \;\;\; \alpha \approx 5.7*10^{-2}, \;\;\; \beta \approx 0.6.
\end{equation} 

\begin{figure}[H]
\centering
\begin{subfigure}[b]{0.45\textwidth}\centering
\centering
        \includegraphics[width=\textwidth]{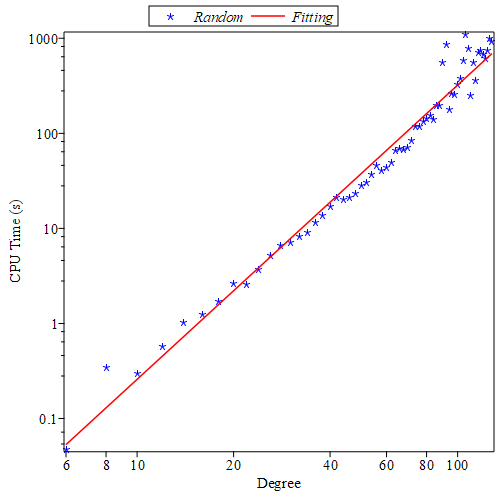}
        \caption{$t$ versus $m$}
        \label{left}
    \end{subfigure}
\begin{subfigure}[b]{0.45\textwidth}\centering
\centering
        \includegraphics[width=\textwidth]{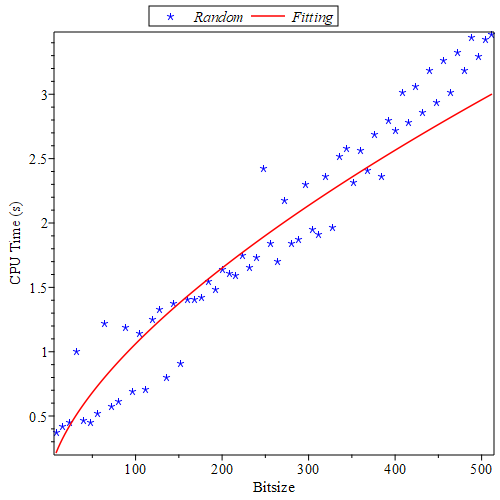}
        \caption{$t$ versus $\tau$}
        \label{right}
    \end{subfigure}
\caption{\ref{left}: CPU time $t$ in seconds versus the degree $m$ with a fixed bitsize $\tau=4$. The asterisk represents the computations corresponding to degrees and line represents the fitting by the power law \eqref{eq64}. \ref{right}: CPU time $t$ in seconds versus the bitsize $\tau$ with a fixed degree $n=8$. The asterisk represents the computations corresponding to bitsizes and line represents the fitting by the power law \eqref{eq65}.} 
\label{fig1}
\end{figure}

\section{Conclusion and Future Work.}\label{sec-conclusion}

We have presented a new approach to the problem of detecting projective equivalences of space rational curves. The method is inspired in the ideas developed in \citep{Alcazar201551} for computing symmetries of 3D space rational curves{, as well as in the theory of differential invariants}. The method proceeds by introducing two {rational functions}, called projective curvatures, so that the projectivities between the curves are derived after computing the M\"obius-like factors of two polynomials built from the projective curvatures. From an algorithmic point of view, it only requires gcd computing and factoring of a polynomial of relatively small degree, and therefore differs from previous approaches, where big polynomial systems were used. The experimentation carried out with \citet{maple} shows that the method is efficient and works better than previous approaches as the degree of the curves involved in the computation grow.  

\begin{color}{black}
We conjecture that the method is generalizable to other dimensions, transformation groups and parametric varieties (e.g. surfaces). The essential requirement is to know what kind of transformation we have in the parameter space (in the case treated in this paper, M\"obius transformations). The sketched general scheme is: 
\begin{itemize}
\item [(1)] {\it Generate invariants} (in the case treated in this paper, the $I_i$). In general, these invariants will not have a nice behavior with respect to the transformations in the parameter space; that is what we mean when we speak about ``commuting with M\"obius transformations"', in our case. 
\item [(2)] From the invariants in (1), {\it generate other invariants that behave nicely with respect to the transformations in the parameter space}. In our case, these are the $\kappa_i$. In general, this is a problem of eliminating variables cleverly.
\item [(3)] From the invariants in (2), {\it find efficiently the transformation in the parameter space}. In our case, this is done by gcd computation and factoring. 
\end{itemize}
This is a general outline that needs to be adapted depending on the dimension, the transformation group and the varieties involved. We do not have at the moment a proof that this scheme always works, although our ongoing investigation suggests that it certainly succeeds for planar rational curves. So the theoretical study of the viability, generality and correctness of the suggested strategy is an open question that requires further investigation. 

For rational curves in dimension $n$, the $I_i$ in step (1) would be
\begin{align*}
I_1(\bm{p}):=\dfrac{\Vert \bm{p}_{t_0^{n+1}}\, \bm{p}_{t_1}\, \bm{p}_{t_0^2}\, \cdots \bm{p}_{t_0^n} \Vert}{\Vert \bm{p}_{t_0}\, \bm{p}_{t_1}\, \bm{p}_{t_0^2}\, \cdots \bm{p}_{t_0^n} \Vert},\, I_2(\bm{p}):=\dfrac{\Vert \bm{p}_{t_0}\, \bm{p}_{t_0^{n+1}}\, \bm{p}_{t_0^2}\, \cdots \bm{p}_{t_0^n} \Vert}{\Vert \bm{p}_{t_0}\, \bm{p}_{t_1}\, \bm{p}_{t_0^2}\, \cdots \bm{p}_{t_0^n} \Vert},\ldots, I_{n+1}(\bm{p}):=\dfrac{\Vert \bm{p}_{t_0}\, \bm{p}_{t_1}\, \bm{p}_{t_0^2}\, \cdots \bm{p}_{t_0^{n+1}} \Vert}{\Vert \bm{p}_{t_0}\, \bm{p}_{t_1}\, \bm{p}_{t_0^2}\, \cdots \bm{p}_{t_0^n} \Vert}. 
\end{align*}
However, deriving the functions in step (2) for a general $n$ is more complicated and requires further research.  
\end{color}

Additionally, the method opens other interesting theoretical questions as well, like the geometric interpretation of the curvatures introduced in this paper, as well as a study of the curves where these curvatures are constant, which is a particular case that the algorithm in this paper cannot deal with. 

\bibliography{mybibfile}

\newpage
\appendix

\section{Proof of Lemma \ref{lem9}.}\label{AppendixA}
\begin{proof} (of Lemma \ref{lem9})
Using the notation in Section \ref{prelmn} for the parametrization $\bm{u}$, we get that
\begin{equation}\label{eq6}
\bm{u}(t_0,t_1)=A\cdot\begin{bmatrix}
t_1^n \\
t_1^{n-1}t_0 \\
\vdots \\
t_1t_0^{n-1} \\
t_0^n
\end{bmatrix},
\end{equation}
where $A$ is the $4\times (n+1)$ coefficient matrix corresponding to $\bm{u}$. Differentiating \eqref{eq6} we have
\begin{align*}
\bm{u}_{t_0}(t_0,t_1) & =A\cdot T_0 , & T_0 &=\begin{bmatrix}
0 \\
t_1^{n-1} \\
\vdots \\
(n-1)t_1t_0^{n-2} \\
nt_0^{n-1}
\end{bmatrix} \\
\bm{u}_{t_1}(t_0,t_1) & =A\cdot T_1, & T_1 &=\begin{bmatrix}
nt_1^{n-1} \\
(n-1)t_1^{n-2}t_0 \\
\vdots \\
t_0^{n-1} \\
0
\end{bmatrix} \\
\bm{u}_{t_0^2}(t_0,t_1) & =A\cdot T_2, & T_2 &=\begin{bmatrix}
0 \\
0 \\
\vdots \\
(n-1)(n-2)t_1t_0^{n-3} \\
n(n-1)t_0^{n-2}
\end{bmatrix} \\
\bm{u}_{t_0^3}(t_0,t_1) & =A\cdot T_3, & T_3 &=\begin{bmatrix}
0 \\
0 \\
0 \\
\vdots \\
(n-1)(n-2)(n-3)t_1t_0^{n-4} \\
n(n-1)(n-2)t_0^{n-3}
\end{bmatrix}.
\end{align*}

Thus

\begin{equation}\label{eq7}
D(\bm{u})=\left[ A\cdot T_0\,\, A\cdot T_1\,\, A\cdot T_2\,\, A\cdot T_3\right] =A\cdot T,
\end{equation}
where $T=\left[ T_0\,\, T_1\,\, T_2\,\, T_3\right]$.  

Now since by hypothesis $\bm{C}$ is not contained in a hyperplane, $\mbox{rank}(A)=4$. Additionally, since $n\geq 4$ we also have $\mbox{rank}(T)=4$. But then the product $A\cdot T$ must also have full rank, and therefore $\Delta(\bm{u})=|A\cdot T|$ cannot be identically zero. 
\end{proof}

\section{Projective curvatures are well defined.}\label{AppendixB}

In this appendix we prove {Lemma \ref{denomnot} in Section \ref{subsec-proj-curv}}. This result follows from a more general result, namely that the $I_i$, $i\in \{1,2,3,4\}$, introduced in Subsection \ref{subsec3.2} are algebraically independent{. This last result will be proven first.}

The following technical results regarding the properties of the {$I_i$} will be later used to prove the algebraic independence of the $I_i$.

\begin{lemma}\label{lem11}
Let $\bm{C}$ be a rational algebraic curve of degree $n$ properly parametrized by $\bm{p}(t_0,t_1)=(\bm{p}_0(t_0,t_1),\bm{p}_1(t_0,t_1),\\
\bm{p}_2(t_0,t_1),\bm{p}_3(t_0,t_1))$ satisfying \textit{hypotheses} \textit{(i-iv)}. Then $t_1$ is a factor of $\Delta(\bm{p})$.
\end{lemma}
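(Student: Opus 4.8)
The plan is to show that $\Delta(\bm{p})$, regarded as a polynomial in $t_0,t_1$, vanishes identically once we set $t_1=0$; since $t_1$ is irreducible, this is equivalent to $t_1\mid \Delta(\bm{p})$. Recall from Eq. \eqref{As} that $\Delta(\bm{p})=\Vert \bm{p}_{t_0}\, \bm{p}_{t_1}\, \bm{p}_{t_0^2}\, \bm{p}_{t_0^3}\Vert$, i.e. the determinant of the matrix $D(\bm{p})$ whose columns are these four derivative vectors. Thus it suffices to prove that the four columns of $D(\bm{p})$, evaluated at $t_1=0$, are linearly dependent as vectors of polynomials in $t_0$.

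To this end I would use the homogeneous expansion $\bm{p}(t_0,t_1)=\sum_{j=0}^{n}\bm{c_j}\,t_0^{n-j}t_1^{j}$ from Section \ref{prelmn}. Differentiating first and only afterwards setting $t_1=0$ is the key point: differentiating $k$ times with respect to $t_0$ and then evaluating at $t_1=0$ annihilates every term with $j>0$, leaving only the $j=0$ summand, so that
\[
\bm{p}_{t_0^{k}}(t_0,0)=\frac{n!}{(n-k)!}\,\bm{c_0}\,t_0^{\,n-k},\qquad k=1,2,3.
\]
Differentiating once with respect to $t_1$ and evaluating at $t_1=0$ keeps only the $j=1$ summand, giving $\bm{p}_{t_1}(t_0,0)=\bm{c_1}\,t_0^{\,n-1}$.

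Consequently the matrix $D(\bm{p})(t_0,0)$ has as columns $n\,t_0^{n-1}\bm{c_0}$, $t_0^{n-1}\bm{c_1}$, $n(n-1)\,t_0^{n-2}\bm{c_0}$ and $n(n-1)(n-2)\,t_0^{n-3}\bm{c_0}$. Three of these (the first, third and fourth) are scalar multiples of the single vector $\bm{c_0}$, hence the four columns are linearly dependent and the determinant is identically zero in $t_0$. This yields $\Delta(\bm{p})(t_0,0)\equiv 0$, and therefore $t_1\mid \Delta(\bm{p})$, as claimed.

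There is no genuine obstacle here; the only point requiring care is the order of operations — one must differentiate before substituting $t_1=0$, since the collapse of three of the columns onto the common direction $\bm{c_0}$ is exactly what makes the argument work. Equivalently, one could invoke the factorization $D(\bm{p})=A\cdot T$ of Eq. \eqref{eq7} and observe that at $t_1=0$ the columns $T_0,T_2,T_3$ all collapse onto the last coordinate direction, so that $\mathrm{rank}\,T(t_0,0)\le 2$ and the determinant vanishes.
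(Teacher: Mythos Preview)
Your proof is correct. The approach, however, differs from the paper's: rather than evaluating at $t_1=0$ and exhibiting a column dependence, the paper expands $\Delta(\bm{p})$ along the column $\bm{p}_{t_0}$, then expands each resulting $3\times 3$ cofactor along the column coming from $\bm{p}_{t_1}$, and finally verifies by a double-sum computation that every $2\times 2$ minor formed from the columns $\bm{p}_{t_0^2},\bm{p}_{t_0^3}$ carries $t_1$ as a factor (because in the product expansion the diagonal term $r=s$ cancels). Your argument captures the same underlying reason --- at $t_1=0$ the columns $\bm{p}_{t_0},\bm{p}_{t_0^2},\bm{p}_{t_0^3}$ all collapse onto $\bm{c_0}$ --- but does so in one stroke, without any cofactor expansion or coefficient bookkeeping. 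The alternative route you sketch via $D(\bm{p})=A\cdot T$ and $\mathrm{rank}\,T(t_0,0)\le 2$ is likewise valid and arguably the cleanest formulation.
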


\begin{proof}
Since the degree of $\bm{C}$ is $n$, we can write
\begin{equation*}
\bm{p}_k(t_0,t_1)=\sum_{r=0}^{n}a_{r,k}t_0^{n-r}t_1^{r},\, \, \, 0\leq k\leq 3.
\end{equation*}
The partial derivatives of order $i$ of these polynomials with respect to $t_0$ are
\begin{equation*}
\dfrac{\partial^i \bm{p}_k}{\partial t_0^i}(t_0,t_1)=\sum_{r=0}^{n-i}\dfrac{(n-r)!}{(n-r-i)!}a_{r,k}t_0^{n-r-i}t_1^{r},
\end{equation*}
with $i\in\{1,2,3\}$.

Additionally,
\begin{align*}
\Delta(\bm{p})=\lVert \bm{p}_{t_0} \, \bm{p}_{t_1}\, \bm{p}_{t_0^2}\, \bm{p}_{t_0^3}\rVert=\begin{vmatrix}
\dfrac{\partial \bm{p}_0}{\partial t_0}(t_0,t_1) \,\dfrac{\partial \bm{p}_0}{\partial t_1}(t_0,t_1), \dfrac{\partial^2 \bm{p}_0}{\partial t_0^2}(t_0,t_1), \dfrac{\partial^3 \bm{p}_0}{\partial t_0^3}(t_0,t_1)\\
\dfrac{\partial \bm{p}_1}{\partial t_0}(t_0,t_1) \,\dfrac{\partial \bm{p}_1}{\partial t_1}(t_0,t_1), \dfrac{\partial^2 \bm{p}_1}{\partial t_0^2}(t_0,t_1), \dfrac{\partial^3 \bm{p}_1}{\partial t_0^3}(t_0,t_1)\\
\dfrac{\partial \bm{p}_2}{\partial t_0}(t_0,t_1) \,\dfrac{\partial \bm{p}_2}{\partial t_1}(t_0,t_1), \dfrac{\partial^2 \bm{p}_2}{\partial t_0^2}(t_0,t_1), \dfrac{\partial^3 \bm{p}_2}{\partial t_0^3}(t_0,t_1)\\
\dfrac{\partial \bm{p}_3}{\partial t_0}(t_0,t_1) \,\dfrac{\partial \bm{p}_3}{\partial t_1}(t_0,t_1), \dfrac{\partial^2 \bm{p}_3}{\partial t_0^2}(t_0,t_1), \dfrac{\partial^3 \bm{p}_3}{\partial t_0^3}(t_0,t_1)
\end{vmatrix}.
\end{align*}
In order to compute $\Delta(\bm{p})$, we expand the above determinant by the first column,
\begin{align*}
\Delta(\bm{p})=\dfrac{\partial \bm{p}_0}{\partial t_0}(t_0,t_1)\begin{vmatrix}
\dfrac{\partial \bm{p}_1}{\partial t_1}(t_0,t_1), \dfrac{\partial^2 \bm{p}_1}{\partial t_0^2}(t_0,t_1), \dfrac{\partial^3 \bm{p}_1}{\partial t_0^3}(t_0,t_1)\\
\dfrac{\partial \bm{p}_2}{\partial t_1}(t_0,t_1), \dfrac{\partial^2 \bm{p}_2}{\partial t_0^2}(t_0,t_1), \dfrac{\partial^3 \bm{p}_2}{\partial t_0^3}(t_0,t_1)\\
\dfrac{\partial \bm{p}_3}{\partial t_1}(t_0,t_1), \dfrac{\partial^2 \bm{p}_3}{\partial t_0^2}(t_0,t_1), \dfrac{\partial^3 \bm{p}_3}{\partial t_0^3}(t_0,t_1)
\end{vmatrix}-\dfrac{\partial \bm{p}_1}{\partial t_0}(t_0,t_1)\begin{vmatrix}
\dfrac{\partial \bm{p}_0}{\partial t_1}(t_0,t_1), \dfrac{\partial^2 \bm{p}_0}{\partial t_0^2}(t_0,t_1), \dfrac{\partial^3 \bm{p}_0}{\partial t_0^3}(t_0,t_1)\\
\dfrac{\partial \bm{p}_2}{\partial t_1}(t_0,t_1), \dfrac{\partial^2 \bm{p}_2}{\partial t_0^2}(t_0,t_1), \dfrac{\partial^3 \bm{p}_2}{\partial t_0^3}(t_0,t_1)\\
\dfrac{\partial \bm{p}_3}{\partial t_1}(t_0,t_1), \dfrac{\partial^2 \bm{p}_3}{\partial t_0^2}(t_0,t_1), \dfrac{\partial^3 \bm{p}_3}{\partial t_0^3}(t_0,t_1)
\end{vmatrix}
\end{align*}
\begin{align*}
+\dfrac{\partial \bm{p}_2}{\partial t_0}(t_0,t_1) \begin{vmatrix}
\dfrac{\partial \bm{p}_0}{\partial t_1}(t_0,t_1), \dfrac{\partial^2 \bm{p}_0}{\partial t_0^2}(t_0,t_1), \dfrac{\partial^3 \bm{p}_0}{\partial t_0^3}(t_0,t_1)\\
\dfrac{\partial \bm{p}_1}{\partial t_1}(t_0,t_1), \dfrac{\partial^2 \bm{p}_1}{\partial t_0^2}(t_0,t_1), \dfrac{\partial^3 \bm{p}_1}{\partial t_0^3}(t_0,t_1)\\
\dfrac{\partial \bm{p}_3}{\partial t_1}(t_0,t_1), \dfrac{\partial^2 \bm{p}_3}{\partial t_0^2}(t_0,t_1), \dfrac{\partial^3 \bm{p}_3}{\partial t_0^3}(t_0,t_1)
\end{vmatrix}-\dfrac{\partial \bm{p}_3}{\partial t_0}(t_0,t_1) \begin{vmatrix}
\dfrac{\partial \bm{p}_0}{\partial t_1}(t_0,t_1), \dfrac{\partial^2 \bm{p}_0}{\partial t_0^2}(t_0,t_1), \dfrac{\partial^3 \bm{p}_0}{\partial t_0^3}(t_0,t_1)\\
\dfrac{\partial \bm{p}_1}{\partial t_1}(t_0,t_1), \dfrac{\partial^2 \bm{p}_1}{\partial t_0^2}(t_0,t_1), \dfrac{\partial^3 \bm{p}_1}{\partial t_0^3}(t_0,t_1)\\
\dfrac{\partial \bm{p}_2}{\partial t_1}(t_0,t_1), \dfrac{\partial^2 \bm{p}_2}{\partial t_0^2}(t_0,t_1), \dfrac{\partial^3 \bm{p}_2}{\partial t_0^3}(t_0,t_1)
\end{vmatrix}.
\end{align*}

Now let us consider the cofactors of the elements in the first column of each of these four determinants: 
\begin{align*}
& \dfrac{\partial ^2 \bm{p}_k}{\partial t_0^{2}}(t_0,t_1)\dfrac{\partial ^3 \bm{p}_l}{\partial t_0^{3}}(t_0,t_1)-\dfrac{\partial ^3 \bm{p}_l}{\partial t_0^{3}}(t_0,t_1)\dfrac{\partial ^2 \bm{p}_k}{\partial t_0^{2}}(t_0,t_1)=\\
&=\sum_{r=0}^{n-2}\sum_{s=0}^{n-3}\dfrac{(n-r)!}{(n-r-2)!}\dfrac{(n-s)!}{(n-s-3)!}a_{r,k}a_{s,l}t_0^{2n-(r+s+5)}t_1^{r+s}\\
& -\sum_{r=0}^{n-2}\sum_{s=0}^{n-3}\dfrac{(n-r)!}{(n-r-2)!}\dfrac{(n-s)!}{(n-s-3)!}a_{s,k}a_{r,l}t_0^{2n-(r+s+5)}t_1^{r+s}\\
&=\sum_{r=0}^{n-2}\sum_{s=0}^{n-3}\dfrac{(n-r)!}{(n-r-2)!}\dfrac{(n-s)!}{(n-s-3)!}(a_{s,k}a_{r,l}-a_{r,k}a_{s,l})t_0^{2n-(r+s+5)}t_1^{r+s},
\end{align*}
where $ 0\leq k<l \leq 3.$ One can easily see that $a_{s,k}a_{r,l}-a_{r,k}a_{s,l}=0$ whenever $r=s$. Thus, whenever $r+s>0$ the factor $t_1$ is present in each cofactor, and therefore $t_1$ is a factor of $\Delta(\bm{p})$.
\end{proof}

The following lemma follows directly from properties of determinants. Here we recall the definition of the {rational expressions} $I_i$ introduced in Subsection \ref{subsec3.2},

\begin{equation}\label{eq8}
I_1(\bm{p}):=\dfrac{A_1(\bm{p})}{\Delta(\bm{p})},\, I_2(\bm{p}):=\dfrac{A_2(\bm{p})}{\Delta(\bm{p})},\, I_3(\bm{p}):=\dfrac{A_3(\bm{p})}{\Delta(\bm{p})},\, I_4(\bm{p}):=\dfrac{A_4(\bm{p})}{\Delta(\bm{p})}.
\end{equation}

\begin{lemma}\label{lem12}
Let $\bm{C}$ be a rational algebraic curve properly parametrized by $\bm{p}$ satisfying \textit{hypotheses} \textit{(i-iv)}. Then
\begin{align*}
A_1(\bm{p})_{t_0} &= A_{5,1}(\bm{p})-\dfrac{n-1}{t_1}A_2(\bm{p}) \\
A_2(\bm{p})_{t_0} &= A_{5,2}(\bm{p}) \\
A_3(\bm{p})_{t_0} &= A_{5,3}(\bm{p})+\dfrac{t_0}{t_1}A_2(\bm{p})-A_1(\bm{p})\\
A_4(\bm{p})_{t_0} &= A_{5,4}(\bm{p})-A_3(\bm{p}),
\end{align*}
where 
\begin{align*}
A_{5,1}(\bm{p})&=\lVert \bm{p}_{t_0^5}\, \bm{p}_{t_1}\, \bm{p}_{t_0^2}\, \bm{p}_{t_0^3}\rVert, &A_{5,2}(\bm{p})=\lVert\bm{p}_{t_0}\, \bm{p}_{t_0^5}\, \bm{p}_{t_0^2}\, \bm{p}_{t_0^3}\rVert,\\
A_{5,3}(\bm{p})&=\lVert \bm{p}_{t_0}\, \bm{p}_{t_1}\, \bm{p}_{t_0^5}\, \bm{p}_{t_0^3}\rVert,&A_{5,4}(\bm{p})=\lVert \bm{p}_{t_0}\, \bm{p}_{t_1}\, \bm{p}_{t_0^2}\, \bm{p}_{t_0^5}\rVert.
\end{align*}
\end{lemma}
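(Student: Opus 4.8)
The plan is to differentiate each determinant $A_i(\bm{p})$ with respect to $t_0$ and exploit the fact that the determinant $\Vert \cdot \Vert$ is multilinear in its columns, so that $\partial_{t_0}$ acts as a derivation: differentiating $\Vert \bm{c}_1\, \bm{c}_2\, \bm{c}_3\, \bm{c}_4\Vert$ produces the sum of the four determinants in which exactly one column has been replaced by its $t_0$-derivative. Since each $A_i$ consists of three columns that are pure $t_0$-derivatives of $\bm{p}$ together with one copy of $\bm{p}_{t_1}$ (except $A_2$, which is built entirely from $t_0$-derivatives), this expansion yields four determinants for each $A_i$, and the task reduces to simplifying and identifying each summand.

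The key simplification comes from two observations. First, many of the resulting determinants contain a repeated column --- for instance, differentiating the $\bm{p}_{t_0^2}$ slot of $A_2$ produces $\bm{p}_{t_0^3}$ in a position already occupied by $\bm{p}_{t_0^3}$ --- and hence vanish, so only a few terms survive in each case. Second, and this is the crucial step, whenever we differentiate the $\bm{p}_{t_1}$ column we obtain the mixed derivative $\bm{p}_{t_0 t_1}$, which is not among the pure derivatives appearing in the $A_i$ or $A_{5,i}$. To eliminate it I would apply Euler's Homogeneous Function Theorem to $\bm{p}_{t_0}$, which is homogeneous of degree $n-1$; this yields $t_0\,\bm{p}_{t_0^2} + t_1\,\bm{p}_{t_0 t_1} = (n-1)\bm{p}_{t_0}$, and therefore
\begin{equation*}
\bm{p}_{t_0 t_1} = \frac{(n-1)\bm{p}_{t_0} - t_0\,\bm{p}_{t_0^2}}{t_1}.
\end{equation*}
Substituting this expression into the surviving determinant and invoking multilinearity once more splits it into a $\bm{p}_{t_0}$-term and a $\bm{p}_{t_0^2}$-term, each scaled by a factor carrying $t_1$ in the denominator; this is precisely where the coefficients $(n-1)/t_1$ and $t_0/t_1$ in the statement originate.

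With these tools in place, the proof becomes a bookkeeping exercise carried out separately for each of the four identities: expand, discard the determinants with a repeated column, substitute for $\bm{p}_{t_0 t_1}$, discard any further vanishing determinants, and finally recognise each remaining determinant --- after permuting its columns back into the canonical order and tracking the sign of each transposition --- as one of $A_1(\bm{p})$, $A_2(\bm{p})$, $A_3(\bm{p})$ or the appropriate $A_{5,i}(\bm{p})$. The identity for $A_2$ is the cleanest, since no $\bm{p}_{t_1}$ column is present and three of the four terms vanish outright, leaving only $A_{5,2}(\bm{p})$. The main obstacle --- really the only delicate point --- is combining the Euler substitution with careful sign accounting under the column permutations; everything else follows routinely from multilinearity and the vanishing of determinants with repeated columns.
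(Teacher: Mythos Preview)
Your proposal is correct and matches the paper's intended argument: the paper simply states that the lemma ``follows directly from properties of determinants'' without writing out the computation, and the ingredients you name --- the Leibniz rule for the derivative of a determinant, the vanishing of determinants with repeated columns, and the Euler relation $\bm{p}_{t_0 t_1}=\frac{(n-1)\bm{p}_{t_0}-t_0\bm{p}_{t_0^2}}{t_1}$ to eliminate the mixed derivative --- are exactly the properties meant. Your outline of the four cases (in particular the sign-free case of $A_2$) is accurate and complete.
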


The next lemma is the standard bracket syzygy in the classical invariant theory \citep{olver_1999}.

\begin{lemma}\label{lem13}
Let $\bm{x}_0,\bm{x}_1,...,\bm{x}_n,\bm{y}_2,\bm{y}_3,...,\bm{y}_n\in\mathbb{E}^n$. Then 
\begin{equation}\label{eq9}
\lVert \bm{x}_1\,\bm{x}_2\, ...\, \bm{x}_n \rVert\lVert \bm{x}_0\,\bm{y}_2\, ...\, \bm{y}_n \rVert-\lVert \bm{x}_0\,\bm{x}_2\, ...\, \bm{x}_n \rVert\lVert \bm{x}_1\,\bm{y}_2\, ...\, \bm{y}_n \rVert-...-\lVert \bm{x}_1\, ...\,\bm{x}_{n-1}\, \bm{x}_0 \rVert\lVert \bm{x}_n\,\bm{y}_2\, ...\, \bm{y}_n \rVert=0.
\end{equation}
\end{lemma}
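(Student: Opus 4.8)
The plan is to recognize the fixed bracket $\lVert \bm{x}\,\bm{y}_2\cdots\bm{y}_n\rVert$ as a linear functional in $\bm{x}$ and then reassemble the whole identity as the Laplace expansion of a single $(n+1)\times(n+1)$ determinant whose rows are linearly dependent. First I would set $L(\bm{x}) := \lVert \bm{x}\,\bm{y}_2\cdots\bm{y}_n\rVert$. Since the determinant is multilinear and alternating in its columns, $L$ is a linear functional on $\mathbb{E}^n$; expanding $L(\bm{x})$ along its first column exhibits $L(\bm{x})=\bm{w}^{\textup{T}}\bm{x}$, where the entries of $\bm{w}\in\mathbb{E}^n$ are the signed $(n-1)\times(n-1)$ minors of $[\bm{y}_2\cdots\bm{y}_n]$. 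The explicit value of $\bm{w}$ is irrelevant; only its existence is needed. With this notation, each summand of \eqref{eq9} has the shape $\pm\,\lVert\cdots\rVert\,L(\bm{x}_i)$.

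The key step is to introduce the $(n+1)\times(n+1)$ block matrix
\[
B=\begin{pmatrix} L(\bm{x}_0) & L(\bm{x}_1) & \cdots & L(\bm{x}_n) \\ \bm{x}_0 & \bm{x}_1 & \cdots & \bm{x}_n \end{pmatrix},
\]
whose first row records the scalars $L(\bm{x}_i)$ and whose lower $n\times(n+1)$ block is the coordinate matrix $[\bm{x}_0\,\cdots\,\bm{x}_n]$. Laplace-expanding $\det B$ along the first row, the entry $L(\bm{x}_i)$ sits in column $i+1$, so its cofactor carries the sign $(-1)^{1+(i+1)}=(-1)^i$ and its minor, obtained by deleting the first row and the $(i+1)$-th column, equals $\lVert \bm{x}_0\cdots\widehat{\bm{x}_i}\cdots\bm{x}_n\rVert$. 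Hence
\[
\det B=\sum_{i=0}^{n}(-1)^i\,L(\bm{x}_i)\,\lVert \bm{x}_0\cdots\widehat{\bm{x}_i}\cdots\bm{x}_n\rVert .
\]

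Finally I would conclude by noting that, because $L(\bm{x}_i)=\bm{w}^{\textup{T}}\bm{x}_i$, the first row of $B$ is exactly $\bm{w}^{\textup{T}}$ applied to the lower block, i.e. the fixed linear combination $\sum_{j} w_j\,(\text{row }j{+}1)$ of the remaining $n$ rows. The rows of $B$ are therefore linearly dependent and $\det B=0$, which already yields the identity in the symmetric form $\sum_{i}(-1)^i L(\bm{x}_i)\lVert \bm{x}_0\cdots\widehat{\bm{x}_i}\cdots\bm{x}_n\rVert=0$. The one remaining task, and the only place demanding care, is reconciling this with the exact display \eqref{eq9}: for $i\geq 1$, moving $\bm{x}_0$ from the leading slot into slot $i$ of the $i$-th minor costs $i-1$ transpositions, so $(-1)^i\lVert \bm{x}_0\cdots\widehat{\bm{x}_i}\cdots\bm{x}_n\rVert=-\lVert \bm{x}_1\cdots\bm{x}_{i-1}\,\bm{x}_0\,\bm{x}_{i+1}\cdots\bm{x}_n\rVert$, which reproduces the uniform minus signs of the stated identity while the $i=0$ term retains its plus sign. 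Thus the main obstacle is purely this sign bookkeeping; the vanishing itself is immediate from the row dependence of $B$. (Equivalently, one could observe that the left side of \eqref{eq9} is an alternating $(n+1)$-linear form on the $n$-dimensional space $\mathbb{E}^n$ and hence identically zero, but the determinantal argument makes both the alternation and the signs transparent at once.)
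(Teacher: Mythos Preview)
Your argument is correct. Recognizing $L(\bm{x})=\lVert \bm{x}\,\bm{y}_2\cdots\bm{y}_n\rVert$ as a linear functional and assembling the auxiliary $(n+1)\times(n+1)$ matrix $B$ is exactly the right move: the first row is then $\bm{w}^{\mathrm{T}}$ times the lower block, so $\det B=0$, and the Laplace expansion along the first row yields the symmetric form $\sum_{i=0}^n(-1)^i L(\bm{x}_i)\,\lVert \bm{x}_0\cdots\widehat{\bm{x}_i}\cdots\bm{x}_n\rVert=0$. Your sign bookkeeping to match the precise display in \eqref{eq9} is also accurate.

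As for the comparison: the paper does not actually prove Lemma~\ref{lem13}. It simply states the identity as ``the standard bracket syzygy in the classical invariant theory'' and cites Olver, so there is no proof in the paper to compare against. Your proof is the standard determinantal/Cramer-type argument for this Pl\"ucker-type relation; the alternative remark you offer at the end (that the left side is an alternating $(n{+}1)$-linear form on an $n$-dimensional space) is an equally clean one-line justification. Either way, you have supplied what the paper deliberately omitted.
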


The next result introduces new relationships between the numerators of some of the invariants $I_i$, and the polynomials introduced in the statement of Lemma \ref{lem12}.

\begin{lemma}\label{lem14}
Let $\bm{C}$ be a rational algebraic curve properly parametrized by $\bm{p}$ satisfying \textit{hypotheses} \textit{(i-iv)}. Then we get that
\begin{align*}
\Delta(\bm{p}_{t_0}) &=-\dfrac{n-1}{t_1} A_{2}(\bm{p}) \\
A_1(\bm{p}_{t_0}) &=\dfrac{n-1}{t_1}(I_3(\bm{p})A_{5,2}(\bm{p})-I_2(\bm{p})A_{5,3}(\bm{p}))-\dfrac{t_0}{t_1}(I_2(\bm{p})A_{5,1}(\bm{p})-I_1(\bm{p})A_{5,2}(\bm{p})) \\
A_2(\bm{p}_{t_0}) &=I_2(\bm{p})A_{5,1}(\bm{p})-I_1(\bm{p})A_{5,2}(\bm{p}) \\
A_3(\bm{p}_{t_0}) &=\dfrac{n-1}{t_1}(I_4(\bm{p})A_{5,2}(\bm{p})-I_2(\bm{p})A_{5,4}(\bm{p})).
\end{align*}
\end{lemma}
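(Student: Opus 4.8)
The plan is to handle all four identities by a single three-step scheme that funnels everything into the bracket syzygy of Lemma~\ref{lem13}. Two elementary facts drive the whole computation. First, since $A_i$ and $\Delta$ are obtained by substituting a parametrization into a fixed determinant expression, evaluating them at $\bm{p}_{t_0}$ amounts to replacing each slot $\bm{u}_{t_0^j}$ by $\bm{p}_{t_0^{j+1}}$ and the slot $\bm{u}_{t_1}$ by the mixed partial $\bm{p}_{t_0 t_1}$. Second, since $\bm{p}_{t_0}$ is homogeneous of degree $n-1$, Euler's Homogeneous Function Theorem gives $t_0\,\bm{p}_{t_0^2}+t_1\,\bm{p}_{t_0 t_1}=(n-1)\bm{p}_{t_0}$, so that $\bm{p}_{t_0 t_1}=\bigl((n-1)\bm{p}_{t_0}-t_0\,\bm{p}_{t_0^2}\bigr)/t_1$.

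First I would expand each of $\Delta(\bm{p}_{t_0})$, $A_1(\bm{p}_{t_0})$, $A_2(\bm{p}_{t_0})$, $A_3(\bm{p}_{t_0})$ using the first fact. Wherever the column $\bm{p}_{t_0 t_1}$ appears I substitute the Euler expression and use multilinearity of the determinant; one of the two resulting terms always contains a repeated column (a second copy of $\bm{p}_{t_0^2}$) and hence vanishes. This produces the prefactors $\frac{n-1}{t_1}$ (and, for $A_1(\bm{p}_{t_0})$, also $\frac{t_0}{t_1}$) and leaves each quantity written as a multiple of a single determinant whose columns lie among $\bm{p}_{t_0},\bm{p}_{t_0^2},\dots,\bm{p}_{t_0^5}$. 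For $\Delta(\bm{p}_{t_0})$ the surviving determinant is $\Vert \bm{p}_{t_0^2}\,\bm{p}_{t_0}\,\bm{p}_{t_0^3}\,\bm{p}_{t_0^4}\Vert$, which is merely a column permutation of $A_2(\bm{p})$; reordering the columns supplies the sign and proves the first identity directly, with no syzygy needed.

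For the remaining three identities the surviving determinant (for instance $\Vert \bm{p}_{t_0^2}\,\bm{p}_{t_0^5}\,\bm{p}_{t_0^3}\,\bm{p}_{t_0^4}\Vert$ in the case of $A_2(\bm{p}_{t_0})$) is not itself one of the $A_i$ or $A_{5,j}$, since it lacks the low-order slots $\bm{p}_{t_0}$, $\bm{p}_{t_1}$. To bring it into the stated form I would multiply through by $\Delta(\bm{p})$ and invoke Lemma~\ref{lem13}. The key observation is that in each case all the relevant determinants share the same two columns, so the identity to be proven is exactly a Pl\"ucker relation on $\mathrm{Gr}(2,4)$: holding the shared pair fixed and applying Lemma~\ref{lem13} with the appropriate assignment of the $\bm{x}_i$ and $\bm{y}_j$ expresses $\Delta(\bm{p})$ times the surviving determinant as the advertised combination $A_j A_{5,k}-A_l A_{5,m}$. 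Dividing by $\Delta(\bm{p})$, which is nonvanishing by Lemma~\ref{lem9}, turns these numerators into the $I_j A_{5,k}$ appearing in the statement. For $A_1(\bm{p}_{t_0})$, which reduces to two surviving determinants, I would apply this procedure to each; one of them reproduces the combination arising for $A_2(\bm{p}_{t_0})$, giving a useful internal consistency check.

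The routine parts are the multilinear expansions and the Euler substitution; the hard part, and where I expect the real effort to lie, is matching each target to the correct instance of Lemma~\ref{lem13} and tracking the signs produced by the column reorderings, since a single misplaced transposition flips the sign of an entire term. To control this I would fix one ordering convention at the outset, placing the two shared columns last so that each determinant reads as an alternating form in the remaining two slots, and carry that convention consistently through all four identities.
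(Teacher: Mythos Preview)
Your proposal is correct and follows essentially the same route as the paper: write each $A_i(\bm{p}_{t_0})$ as a determinant in higher $t_0$-derivatives, replace the mixed partial $\bm{p}_{t_0t_1}$ via Euler's relation for the homogeneous function $\bm{p}_{t_0}$, and then reduce the surviving determinants using the bracket syzygy of Lemma~\ref{lem13} before dividing by $\Delta(\bm{p})$ with Lemma~\ref{lem9}. The paper carries this out explicitly only for $A_1(\bm{p}_{t_0})$, obtaining exactly your two surviving determinants and applying Lemma~\ref{lem13} to each; your observation that one of them coincides with the determinant controlling $A_2(\bm{p}_{t_0})$ is also present there (it is the identity labeled \eqref{eq11}).
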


\begin{proof}
We prove the lemma only for $A_1(\bm{p}_{t_0})$; the proofs for the other equalities are similar. By definition, we have $A_1(\bm{p}_{t_0})=\lVert\bm{p}_{t_0^5}\, \bm{p}_{t_1t_0}\, \bm{p}_{t_0^3}\, \bm{p}_{t_0^4}\rVert$. Using Euler's Homogeneous Function Theorem, we get that
\begin{equation*}
A_1(\bm{p}_{t_0})=\dfrac{n-1}{t_1}\lVert\bm{p}_{t_0^5}\, \bm{p}_{t_0}\, \bm{p}_{t_0^3}\, \bm{p}_{t_0^4}\rVert-\dfrac{t_0}{t_1}\lVert\bm{p}_{t_0^5}\, \bm{p}_{t_0^2}\, \bm{p}_{t_0^3}\, \bm{p}_{t_0^4}\rVert.
\end{equation*}

Let us apply Lemma \ref{lem13} to the vectors $\bm{p}_{t_1},\bm{p}_{t_0^5},\bm{p}_{t_0},\bm{p}_{t_0^3},\bm{p}_{t_0^4}$ and $\bm{p}_{t_0},\bm{p}_{t_0^2},\bm{p}_{t_0^3}$. Eliminating the zero determinants, we have
\begin{align*}
\lVert\bm{p}_{t_0^5}\, \bm{p}_{t_0}\, \bm{p}_{t_0^3}\, \bm{p}_{t_0^4}\rVert\lVert\bm{p}_{t_1}\, \bm{p}_{t_0}\, \bm{p}_{t_0^2}\, \bm{p}_{t_0^3}\rVert-\lVert\bm{p}_{t_1}\, \bm{p}_{t_0}\, \bm{p}_{t_0^3}\, \bm{p}_{t_0^4}\rVert\lVert\bm{p}_{t_0^5}\, \bm{p}_{t_0}\, \bm{p}_{t_0^2}\, \bm{p}_{t_0^3}\rVert-\lVert\bm{p}_{t_0^5}\, \bm{p}_{t_0}\, \bm{p}_{t_0^3}\, \bm{p}_{t_1}\rVert\lVert\bm{p}_{t_0^4}\, \bm{p}_{t_0}\, \bm{p}_{t_0^2}\, \bm{p}_{t_0^3}\rVert=0.
\end{align*}

By the definitions of $A_2(\bm{p}),A_3(\bm{p}),A_{5,2}(\bm{p}),A_{5,3}(\bm{p}),\Delta(\bm{p})$, we obtain

\begin{align*}
\lVert\bm{p}_{t_0^5}\, \bm{p}_{t_0}\, \bm{p}_{t_0^3}\, \bm{p}_{t_0^4}\rVert\Delta(\bm{p})=A_2(\bm{p})A_{5,3}(\bm{p})-A_3(\bm{p})A_{5,2}(\bm{p}).
\end{align*}

This yields, by Lemma \ref{lem9},

\begin{equation}\label{eq10}
\lVert\bm{p}_{t_0^5}\, \bm{p}_{t_0}\, \bm{p}_{t_0^3}\, \bm{p}_{t_0^4}\rVert=I_2(\bm{p})A_{5,3}(\bm{p})-I_3(\bm{p})A_{5,2}(\bm{p}).
\end{equation}

Again applying Lemma \ref{lem13} to the vectors $\bm{p}_{t_0},\bm{p}_{t_0^5},\bm{p}_{t_0^2},\bm{p}_{t_0^3},\bm{p}_{t_0^4}$ and $\bm{p}_{t_1},\bm{p}_{t_0^2},\bm{p}_{t_0^3}$ and eliminating the zero determinants, we have
\begin{align*}
\lVert\bm{p}_{t_0^5}\, \bm{p}_{t_0^2}\, \bm{p}_{t_0^3}\, \bm{p}_{t_0^4}\rVert\lVert\bm{p}_{t_0}\, \bm{p}_{t_1}\, \bm{p}_{t_0^2}\, \bm{p}_{t_0^3}\rVert-\lVert\bm{p}_{t_0}\, \bm{p}_{t_0^2}\, \bm{p}_{t_0^3}\, \bm{p}_{t_0^4}\rVert\lVert\bm{p}_{t_0^5}\, \bm{p}_{t_1}\, \bm{p}_{t_0^2}\, \bm{p}_{t_0^3}\rVert-\lVert\bm{p}_{t_0^5}\, \bm{p}_{t_0^2}\, \bm{p}_{t_0^3}\, \bm{p}_{t_0}\rVert\lVert\bm{p}_{t_0^4}\, \bm{p}_{t_1}\, \bm{p}_{t_0^2}\, \bm{p}_{t_0^3}\rVert=0.
\end{align*}

By the definitions of $A_1(\bm{p}),A_2(\bm{p}),A_{5,1}(\bm{p}),A_{5,2}(\bm{p}),\Delta(\bm{p})$, we obtain

\begin{align*}
\lVert\bm{p}_{t_0^5}\, \bm{p}_{t_0^2}\, \bm{p}_{t_0^3}\, \bm{p}_{t_0^4}\rVert\Delta(\bm{p})=A_2(\bm{p})A_{5,1}(\bm{p})-A_1(\bm{p})A_{5,2}(\bm{p}).
\end{align*}

This yields, by Lemma \ref{lem9}, 

\begin{equation}\label{eq11}
\lVert\bm{p}_{t_0^5}\, \bm{p}_{t_0^2}\, \bm{p}_{t_0^3}\, \bm{p}_{t_0^4}\rVert=I_2(\bm{p})A_{5,1}(\bm{p})-I_1(\bm{p})A_{5,2}(\bm{p}).
\end{equation}

Combining \eqref{eq10} and \eqref{eq11}, we conclude that

\begin{equation*}
A_1(\bm{p}_{t_0}) =\dfrac{n-1}{t_1}(I_3(\bm{p})A_{5,2}(\bm{p})-I_2(\bm{p})A_{5,3}(\bm{p}))-\dfrac{t_0}{t_1}(I_2(\bm{p})A_{5,1}(\bm{p})-I_1(\bm{p})A_{5,2}(\bm{p})).
\end{equation*}
\end{proof}

{This last lemma is also required for proving the algebraic independence of the $I_i$.}

\begin{lemma}\label{lem15}
Let $\bm{C}$ be a rational algebraic curve properly parametrized by $\bm{p}$ satisfying \textit{hypotheses} \textit{(i-iv)}, and $\bm{c}_{j}, j=0,...,n$ be the coefficient vectors of $\bm{p}$. {Then at least one of the polynomials $A_1(\bm{p})$, $A_2(\bm{p})$, $A_3(\bm{p})$, $A_4(\bm{p})$, $\Delta(\bm{p})$ depend on $t_0$.}
\end{lemma}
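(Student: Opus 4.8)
The plan is to argue by contraposition: I will assume that all five polynomials $A_1(\bm{p})$, $A_2(\bm{p})$, $A_3(\bm{p})$, $A_4(\bm{p})$, $\Delta(\bm{p})$ are independent of $t_0$ and derive a contradiction with hypotheses (iii)--(iv). First I would record that, since each of these is a \emph{homogeneous} polynomial in $(t_0,t_1)$, being independent of $t_0$ forces it to be a constant multiple of a power of $t_1$; in particular each $I_i(\bm{p})=A_i(\bm{p})/\Delta(\bm{p})$ becomes a monomial $\gamma_i t_1^{e_i}$, where a quick degree count of the determinants gives $e_1=e_2=-3$, $e_3=-2$, $e_4=-1$. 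Two elementary identities then drive the argument. Differentiating $\Delta(\bm{p})=\Vert \bm{p}_{t_0}\,\bm{p}_{t_1}\,\bm{p}_{t_0^2}\,\bm{p}_{t_0^3}\Vert$ column by column and using $\bm{p}_{t_0t_1}=\frac{n-1}{t_1}\bm{p}_{t_0}-\frac{t_0}{t_1}\bm{p}_{t_0^2}$ (from Euler's theorem), every term but one collapses because of repeated columns and one obtains $\Delta(\bm{p})_{t_0}=A_4(\bm{p})$; hence $A_4(\bm{p})=0$ and $I_4(\bm{p})=0$. Moreover, since $\Delta(\bm{p})\not\equiv 0$ by Lemma \ref{lem9}, Cramer's rule applied to the basis $\bm{p}_{t_0},\bm{p}_{t_1},\bm{p}_{t_0^2},\bm{p}_{t_0^3}$ yields the ODE-type relation
\[
\bm{p}_{t_0^4}=I_1(\bm{p})\,\bm{p}_{t_0}+I_2(\bm{p})\,\bm{p}_{t_1}+I_3(\bm{p})\,\bm{p}_{t_0^2}+I_4(\bm{p})\,\bm{p}_{t_0^3}.
\]

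Next I would substitute $I_i=\gamma_i t_1^{e_i}$ and $I_4=0$, eliminate $\bm{p}_{t_1}$ through Euler's identity $\bm{p}_{t_1}=(n\bm{p}-t_0\bm{p}_{t_0})/t_1$, and clear denominators to get the polynomial identity
\[
t_1^4\,\bm{p}_{t_0^4}=\gamma_2 n\,\bm{p}+(\gamma_1 t_1-\gamma_2 t_0)\,\bm{p}_{t_0}+\gamma_3 t_1^2\,\bm{p}_{t_0^2}.
\]
Writing $\bm{p}=\sum_{j=0}^n \bm{c}_j t_0^{n-j}t_1^{j}$ and comparing the coefficient of $t_0^{n-k}t_1^{k}$ on both sides produces, after simplification, a recursion of the shape
\[
\gamma_2\,k\,\bm{c}_k=(\text{const})\,\bm{c}_{k-4}-\gamma_1(n-k+1)\,\bm{c}_{k-1}-\gamma_3(n-k+2)(n-k+1)\,\bm{c}_{k-2},
\]
valid for $k\ge 1$ (with negative-index terms absent). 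If $\gamma_2\neq 0$, this expresses each $\bm{c}_k$ as a linear combination of $\bm{c}_0,\dots,\bm{c}_{k-1}$, so by induction \emph{every} $\bm{c}_k$ lies in the line $\operatorname{span}(\bm{c}_0)$; the coefficient matrix of $\bm{p}$ would then have rank $1$, contradicting hypothesis (iv).

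Finally I would treat the degenerate case $\gamma_2=0$, i.e. $A_2(\bm{p})\equiv0$. Here the relation reduces to $t_1^3\bm{p}_{t_0^4}=\gamma_1\bm{p}_{t_0}+\gamma_3 t_1\bm{p}_{t_0^2}$; comparing the lowest-order coefficients forces first $\gamma_1=0$ (otherwise $\bm{c}_0=\bm 0$, impossible by Remark \ref{isnotzero}) and then $\gamma_3=0$, so that all $I_i(\bm{p})=0$ and $\bm{p}_{t_0^4}\equiv\bm 0$. But $\bm{c}_0\neq\bm 0$ means some component of $\bm{p}$ has degree $n\ge 4$ in $t_0$ (using hypothesis (iii)), whence $\bm{p}_{t_0^4}\not\equiv\bm 0$, a contradiction. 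The main obstacle I anticipate is the bookkeeping of the coefficient recursion together with the case split at $\gamma_2=0$: one must track precisely which monomials appear on each side (the homogeneity degree drops from $n$ to $n-1$ once $\gamma_2=0$) and check that the induction genuinely collapses \emph{all} coefficient vectors onto a single line, rather than onto a subspace of dimension $2$ or $3$. Everything else reduces to routine determinant differentiation and Euler's theorem.
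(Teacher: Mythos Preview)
Your argument is correct and takes a genuinely different route from the paper. The paper proves the lemma by induction on the degree $n$: for the base case $n=4$ it expands $\Delta(\bm{p})$ explicitly as a linear combination of determinants $\lVert \bm{c}_i\,\bm{c}_j\,\bm{c}_k\,\bm{c}_l\rVert$ and shows that $t_0$-independence forces $\bm{c}_0=0$; for the inductive step it passes to $\bm{q}=\bm{p}_{t_0}$ and invokes the auxiliary Lemmas \ref{lem12} and \ref{lem14} (derivative identities among the $A_i$ and syzygies among the $A_{5,j}$) to show that $t_0$-independence of the five polynomials for $\bm{p}$ propagates to $\bm{q}$, whence $\bm{c}_0'=0$ and hence $\bm{c}_0=0$.

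Your approach bypasses the induction and the auxiliary lemmas entirely. The crucial observation is the Cramer identity $\bm{p}_{t_0^4}=\sum_i I_i(\bm{p})\,D(\bm{p})_i$, which turns the hypothesis ``each $I_i$ is a monomial in $t_1$'' into a linear ODE with polynomial coefficients for $\bm{p}$ itself. Reading off the coefficient of $t_0^{n-k}t_1^k$ yields exactly the recursion you wrote, and for $\gamma_2\neq 0$ it expresses every $\bm{c}_k$, $k\ge 1$, as a combination of strictly earlier $\bm{c}_j$'s; so the whole coefficient matrix collapses to $\mathrm{span}(\bm{c}_0)$, contradicting rank $4$. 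Your worry about the span being two- or three-dimensional is unfounded: the only ``initial'' vector in the recursion is $\bm{c}_0$ (the $k=0$ equation is vacuous), so strong induction really gives a single line. The $\gamma_2=0$ branch is equally clean once you look at the coefficient of $t_0^{n-1}$ (forcing $\gamma_1=0$) and then $t_0^{n-2}$ (forcing $\gamma_3=0$), after which $\bm{p}_{t_0^4}\equiv 0$ contradicts $\bm{c}_0\neq 0$ and $n\ge 4$.

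The payoff of your route is that it is self-contained and conceptually transparent (an ODE with a one-dimensional solution space), whereas the paper's induction has the virtue of reusing the determinant machinery already developed in Lemmas \ref{lem12}--\ref{lem14}; but for a reader who only wants Lemma \ref{lem15}, your argument is shorter and requires no preparatory results beyond $\Delta(\bm{p})\not\equiv 0$ and $\bm{c}_0\neq 0$.
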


\begin{proof}
We proceed by contradiction to prove that if neither of $A_1(\bm{p})$, $A_2(\bm{p})$, $A_3(\bm{p})$, $A_4(\bm{p})$, $\Delta(\bm{p})$ depend on $t_0$, then $\bm{c}_0=0$; since this cannot happen by Remark \ref{isnotzero}, the statement follows.

So let us assume that the $A_i(\bm{p})$ and $\Delta(\bm{p})$ do not depend on $t_0$. In order to show that under this assumption $\bm{c}_0=0$, we will use induction on $n$. Recall that hypothesis (iii) assumes that $n\geq 4$. Now for $n=4$, we have
\begin{align*}
\bm{p}(t_0,t_1)=\left(\sum_{i=0}^{4}{c_{i,0}t_0^{4-i}t_1^{i}},\sum_{i=0}^{4}{c_{i,1}t_0^{4-i}t_1^{i}},\sum_{i=0}^{4}{c_{i,2}t_0^{4-i}t_1^{i}},\sum_{i=0}^{4}{c_{i,3}t_0^{4-i}t_1^{i}}\right),
\end{align*}
where the $c_{j,i}$, $0 \leq i \leq4$ are the components of the vectors $\bm{c}_j$, $0 \leq j \leq3$. Now we can compute the determinant $\Delta(\bm{p})$ as
\begin{align*}
\Delta(\bm{p})=& -192\lVert\bm{c}_2\, \bm{c}_3\, \bm{c}_1\, \bm{c}_0\rVert t_0^4t_1^5-192\lVert\bm{c}_2\, \bm{c}_4\, \bm{c}_1\, \bm{c}_0\rVert t_0^3t_1^6+288\lVert\bm{c}_3\, \bm{c}_4\, \bm{c}_1\, \bm{c}_0\rVert t_0^2t_1^7\\
& +384\lVert\bm{c}_3\, \bm{c}_4\, \bm{c}_2\, \bm{c}_0\rVert t_0t_1^8+96\lVert\bm{c}_3\, \bm{c}_4\, \bm{c}_2\, \bm{c}_1\rVert t_1^9.
\end{align*}
Since by assumption $\Delta(\bm{p})$ does not depend on $t_0$, we get
\begin{align*}
\lVert\bm{c}_3\, \bm{c}_4\, \bm{c}_2\, \bm{c}_1\rVert\neq0, \lVert\bm{c}_3\, \bm{c}_4\, \bm{c}_2\, \bm{c}_0\rVert=0, \lVert\bm{c}_3\, \bm{c}_4\, \bm{c}_1\, \bm{c}_0\rVert=0, \lVert\bm{c}_2\, \bm{c}_4\, \bm{c}_1\, \bm{c}_0\rVert=0, \lVert\bm{c}_2\, \bm{c}_3\, \bm{c}_1\, \bm{c}_0\rVert=0.
\end{align*}
Since $\lVert\bm{c}_3\, \bm{c}_4\, \bm{c}_2\, \bm{c}_0\rVert=0$ and $\lVert\bm{c}_3\, \bm{c}_4\, \bm{c}_2\, \bm{c}_1\rVert\neq0$, there are scalars $\lambda_1,\lambda_2,\lambda_3$ such that $\bm{c}_0=\lambda_1\bm{c}_3+\lambda_2\bm{c}_4+\lambda_3\bm{c}_2$. Substituting $\bm{c}_0=\lambda_1\bm{c}_3+\lambda_2\bm{c}_4+\lambda_3\bm{c}_2$ in the vanishing determinants we conclude that $\lambda_1=\lambda_2=\lambda_3=0$, i.e., $\bm{c}_0=0$.

Assume that the lemma holds for $n$. Let us show that for a parametrization $\bm{p}$ with degree $n+1$ and coefficient vectors $\bm{c}_j$, $j=0,...,n+1$, we also have $\bm{c}_0=0$. Consider the parametrization $\bm{q}=\bm{p}_{t_0}$ with degree $n$ and coefficient vectors $\bm{c}'_j$, $j=0,...,n$.  
Since by assumption $A_i(\bm{p})$, $i\in\{1,2,3\}$ and $\Delta(\bm{p})$ do not depend on $t_0$, we have
\begin{equation}\label{eq12}
A_1(\bm{p})=k_1t_1^{4n-6},A_2(\bm{p})=k_2t_1^{4n-6},A_3(\bm{p})=k_3t_1^{4n-5}, \Delta(\bm{p})=k_0t_1^{4n-3},
\end{equation}
where $k_0,k_1,k_2,k_3$ are constants. The equation \eqref{eq10} and Lemma \ref{lem12} yield
\begin{equation}\label{eq13}
A_{5,1}(\bm{p})=nk_2t_1^{4n-7}, A_{5,2}(\bm{p})=0, A_{5,3}(\bm{p})=k_1t_1^{4n-6}-k_2t_0t_1^{4n-7}, A_{5,4}(\bm{p})=k_3t_1^{4n-5}.
\end{equation}

By using Lemma \ref{lem14} and the equations \eqref{eq12} and \eqref{eq13}, we obtain that $A_i(\bm{q})$, $i\in\{1,2,3\}$ and $\Delta(\bm{q})$ do not depend on $t_0$, so $\bm{c}'_0=0$. However, one can easily see that $\bm{c}_j=(n+1-j)\bm{c}'_j$ for all $j\in\{0,...,n\}$, and therefore $\bm{c}_0=0$.  
\end{proof}

Now we are ready to prove that the {$I_i$} are algebraically independent.

\begin{theorem}\label{teo17}
Let $\bm{C}$ be a rational algebraic curve properly parametrized by $\bm{p}=(p_1,p_2,p_3,p_4)$ satisfying \textit{hypotheses} \textit{(i-iv)}. Then $I_1(\bm{p}),I_2(\bm{p}),I_3(\bm{p}),I_4(\bm{p})$ are algebraically independent.
\end{theorem}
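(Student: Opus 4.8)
The plan is to read the statement as algebraic independence over $\mathbb{R}$ of $I_1,\dots,I_4$ regarded as elements of the field $\mathbb{R}(t_0,t_1,\{c_{j,k}\})$ of rational functions in the parameter $(t_0,t_1)$ and the coefficients $c_{j,k}$ of the generic parametrization $\bm{p}$. This is the only sensible reading, since for a fixed curve the $I_i$ are homogeneous of degree $0$ in $(t_0,t_1)$, hence functions of the single quantity $t_0/t_1$, so no four of them can be independent in $\mathbb{R}(t_0,t_1)$ alone; adjoining the $c_{j,k}$ as indeterminates is what makes the transcendence degree large enough. With this reading, by the Jacobian criterion in characteristic $0$ it is equivalent to show that the differentials $dI_1,\dots,dI_4$ are linearly independent over $\mathbb{R}(t_0,t_1,\{c_{j,k}\})$, i.e. that the Jacobian matrix $\big(\partial I_i/\partial x_j\big)$ has rank $4$ for some four variables $x_1,\dots,x_4$ taken among $t_0,t_1$ and the $c_{j,k}$. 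Since a rational function is nonzero as soon as it is nonzero at one point, it then suffices to exhibit a single admissible curve $\bm{p}^\star$ and a single parameter value at which one such $4\times 4$ minor does not vanish.

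For the choice of variables I would first use Euler's identity $t_0\,\partial_{t_0}I_i+t_1\,\partial_{t_1}I_i=0$ (valid because $I_i$ has degree $0$) to reduce the two parameter directions to the single direction $\partial_{t_0}$, and complete it with three coefficient derivations, say $\partial_{c_{0,1}},\partial_{c_{0,2}},\partial_{c_{0,3}}$ along the components of $\bm{c}_0$, which is nonzero by Remark \ref{isnotzero}. The $t_0$-column can be written out explicitly: a short bracket computation in the spirit of Lemma \ref{lem12} gives $\partial_{t_0}\Delta(\bm{p})=A_4(\bm{p})$, whence $\partial_{t_0}\log\Delta(\bm{p})=I_4$, and Lemma \ref{lem12} supplies $\partial_{t_0}A_i(\bm{p})$; together these express each $\partial_{t_0}I_i$ as an explicit rational combination of the $I_j$ and the fifth-order bracket quotients $A_{5,j}(\bm{p})/\Delta(\bm{p})$. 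The base case $n=4$ I would settle directly: expanding $\Delta$ and the $A_i$ in the bracket coordinates $\Vert\bm{c}_a\,\bm{c}_b\,\bm{c}_c\,\bm{c}_d\Vert$, exactly as in the proof of Lemma \ref{lem15}, turns the chosen minor into a concrete polynomial in the $c_{j,k}$ that one checks to be nonzero. For general $n$, induction on $n$ via the derived curve $\bm{q}=\bm{p}_{t_0}$ (of degree $n-1$, with coefficient vectors $(n-j)\bm{c}_j$) is the natural route, the passage between the invariants of $\bm{p}$ and those of $\bm{q}$ being precisely the content of Lemma \ref{lem14}.

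The main obstacle is that the family $I_1,\dots,I_4$ is not closed under $\partial_{t_0}$: each differentiation introduces the genuinely new fifth-order brackets $A_{5,1},\dots,A_{5,4}$, so neither the Jacobian computation nor the inductive transfer to $\bm{q}$ stays within the four functions we began with. This is exactly what the technical machinery is built to control: Lemmas \ref{lem12}--\ref{lem14}, together with the bracket syzygy of Lemma \ref{lem13}, re-express these higher brackets in terms of the $I_i$ and the $A_{5,j}$, while Lemma \ref{lem15} — guaranteeing that at least one of $A_1,\dots,A_4,\Delta$ genuinely depends on $t_0$, and thereby ruling out the degenerate possibility $\bm{c}_0=0$ forbidden by Remark \ref{isnotzero} — provides the non-degeneracy that prevents the four differentials from collapsing. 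The crux, then, is to assemble these lemmas into a proof that the selected $4\times4$ Jacobian determinant cannot vanish identically, equivalently that no nonzero polynomial $F$ satisfies $F(I_1,I_2,I_3,I_4)\equiv 0$ in $\mathbb{R}(t_0,t_1,\{c_{j,k}\})$; from this, the well-definedness of $\kappa_1,\kappa_2$ claimed in Lemma \ref{denomnot} follows at once, since their denominators are fixed nonzero polynomial expressions in the $I_i$.
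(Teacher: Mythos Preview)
Two issues. First, a factual slip: the $I_i$ are not homogeneous of degree $0$ but of degrees $-3,-3,-2,-1$ respectively (the paper itself records $\deg A_i=4n-10,4n-10,4n-9,4n-8$ and $\deg\Delta=4n-7$). Your transcendence-degree objection to the literal statement survives this correction, since $\mathbb{R}(t_0,t_1)$ still has transcendence degree $2$ over $\mathbb{R}$.

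Second, and more seriously, your reinterpretation proves something weaker than what the paper actually uses. Lemma~\ref{denomnot} invokes Theorem~\ref{teo17} for each \emph{specific} curve $\bm{p}$ satisfying (i)--(iv), in order to conclude that $8(n-3)I_3(\bm{p})+3(n-2)I_4^2(\bm{p})\not\equiv 0$ for that particular $\bm{p}$; generic independence over $\mathbb{R}(t_0,t_1,\{c_{j,k}\})$, established by specializing to one $\bm{p}^\star$, only yields this on a Zariski-open set of curves and leaves the conclusion open for special ones. The paper's own argument is of a different kind altogether: it fixes $\bm{p}$, forms the $5\times2$ Jacobian of $(\Delta,A_1,\dots,A_4)$ with respect to $(t_0,t_1)$ alone, asserts that dependence of the $I_i$ forces this Jacobian to have rank~$1$, reads off the vanishing $2\times2$ minors as first-order ODEs in $t_0$, solves them (using $\partial_{t_0}\Delta=A_4$ together with Lemma~\ref{lem11}) to obtain $\Delta=r_0t_1^{4n-7}$ and then that no $A_i$ depends on $t_0$, and finally contradicts Lemma~\ref{lem15}. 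Your plan does not engage with this line at all, and in its own terms it remains only a sketch: you name the obstacle --- the fifth-order brackets $A_{5,j}$ entering under $\partial_{t_0}$ --- and gesture at Lemmas~\ref{lem12}--\ref{lem15}, but you exhibit no nonvanishing $4\times4$ minor and carry out neither the base case $n=4$ nor the inductive passage to $\bm{q}=\bm{p}_{t_0}$.
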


\begin{proof}
Let us assume that $I_1(\bm{p})=\dfrac{A_1(\bm{p})}{\Delta(\bm{p})},I_2(\bm{p})=\dfrac{A_2(\bm{p})}{\Delta(\bm{p})},I_3(\bm{p})=\dfrac{A_3(\bm{p})}{\Delta(\bm{p})},I_4(\bm{p})=\dfrac{A_4(\bm{p})}{\Delta(\bm{p})}$ are algebraically dependent. It follows that the homogeneous polynomials $\Delta(\bm{p}),A_1(\bm{p}),A_2(\bm{p}),A_3(\bm{p}),A_4(\bm{p})$ in $t_0,t_1$ are algebraically dependent. Thus, the Jacobian matrix
\begin{equation*}
J(\bm{p})=\begin{bmatrix}
\dfrac{\partial \Delta(\bm{p})}{\partial t_0} & \dfrac{\partial \Delta(\bm{p})}{\partial t_1} \\
\dfrac{\partial A_1(\bm{p})}{\partial t_0} & \dfrac{\partial A_1(\bm{p})}{\partial t_1} \\
\dfrac{\partial A_2(\bm{p})}{\partial t_0} & \dfrac{\partial A_2(\bm{p})}{\partial t_1} \\
\dfrac{\partial A_3(\bm{p})}{\partial t_0} & \dfrac{\partial A_3(\bm{p})}{\partial t_1} \\
\dfrac{\partial A_4(\bm{p})}{\partial t_0} & \dfrac{\partial A_4(\bm{p})}{\partial t_1}
\end{bmatrix}
\end{equation*}
has rank $1$. Note that the total degrees of the homogeneous polynomials $\Delta(\bm{p}),A_1(\bm{p}),A_2(\bm{p}),A_3(\bm{p}),A_4(\bm{p})$ are $4n-7,4n-10,4n-10,4n-9,4n-8$, respectively, where $n\geq 4$ is the degree of $\bm{p}$. Using Euler's Homogeneous Function Theorem to eliminate the partial derivative with respect to $t_1$ in the second column of $J(\bm{p})$, we can write $J(\bm{p})$ as
\begin{equation*}
J(\bm{p})=\begin{bmatrix}
\dfrac{\partial \Delta(\bm{p})}{\partial t_0} & \dfrac{4n-7}{t_1}\Delta(\bm{p})-\dfrac{t_0}{t_1}\dfrac{\partial \Delta(\bm{p})}{\partial t_0} \\
\dfrac{\partial A_1(\bm{p})}{\partial t_0} & \dfrac{4n-10}{t_1}A_1(\bm{p})-\dfrac{t_0}{t_1}\dfrac{\partial A_1(\bm{p})}{\partial t_0} \\
\dfrac{\partial A_2(\bm{p})}{\partial t_0} & \dfrac{4n-10}{t_1}A_2(\bm{p})-\dfrac{t_0}{t_1}\dfrac{\partial A_2(\bm{p})}{\partial t_0} \\
\dfrac{\partial A_3(\bm{p})}{\partial t_0} & \dfrac{4n-9}{t_1}A_3(\bm{p})-\dfrac{t_0}{t_1}\dfrac{\partial A_3(\bm{p})}{\partial t_0} \\
\dfrac{\partial A_4(\bm{p})}{\partial t_0} & \dfrac{4n-8}{t_1}A_4(\bm{p})-\dfrac{t_0}{t_1}\dfrac{\partial A_4(\bm{p})}{\partial t_0}
\end{bmatrix}.
\end{equation*}
Applying elementary operations by columns, we reach the matrix
\begin{equation*}
\tilde{J}(\bm{p})=\begin{bmatrix}
\dfrac{\partial \Delta(\bm{p})}{\partial t_0} & \dfrac{4n-7}{t_1}\Delta(\bm{p})\\
\dfrac{\partial A_1(\bm{p})}{\partial t_0} & \dfrac{4n-10}{t_1}A_1(\bm{p}) \\
\dfrac{\partial A_2(\bm{p})}{\partial t_0} & \dfrac{4n-10}{t_1}A_2(\bm{p}) \\
\dfrac{\partial A_3(\bm{p})}{\partial t_0} & \dfrac{4n-9}{t_1}A_3(\bm{p}) \\
\dfrac{\partial A_4(\bm{p})}{\partial t_0} & \dfrac{4n-8}{t_1}A_4(\bm{p})
\end{bmatrix},
\end{equation*}
which must also have rank $1$. Because of this, $\dfrac{4n-8}{t_1}\dfrac{\partial \Delta(\bm{p})}{\partial t_0}A_4(\bm{p})-\dfrac{4n-7}{t_1}\Delta(\bm{p})\dfrac{\partial A_4(\bm{p})}{\partial t_0}=0$. Solving this differential equation yields $\Delta(\bm{p})^{4n-8}=h(t_1)A_4(\bm{p})^{4n-7}$, where $h$ is an arbitrary function of $t_1$. But since the degrees of the homogeneous polynomials $\Delta(\bm{p})^{4n-8}$ and $A_4(\bm{p})^{4n-7}$ are the same, $h$ must be a constant function, say $h(t_1)=c$. By definition, $\dfrac{\partial \Delta(\bm{p})}{\partial t_0}=A_4(\bm{p})$. Therefore we have a differential equation $\Delta(\bm{p})^{4n-8}=c(\dfrac{\partial \Delta(\bm{p})}{\partial t_0})^{4n-7}$. Solving this equation, we get $\Delta(\bm{p})(t_0,t_1)=(c_1t_0+g(t_1))^{4n-7}$, where $g$ is an arbitrary function of $t_1$ and $c_1$ is a constant. We know that $\Delta(\bm{p})$ is a homogeneous polynomial in $t_0,t_1$ with a total degree $4n-7$, so $g$ must be of the form $g(t_1)=c_2t_1$. On the other hand, according to Lemma \ref{lem11}, $t_1$ must be a factor of $\Delta(\bm{p})$. Thus we have $c_1=0$, and in this case, there is a constant $r_0$ such that $\Delta(\bm{p})=r_0t_1^{4n-7}$. 

Again, since $\tilde{J}(\bm{p})$ has rank $1$, the following equations also hold
\begin{equation}\label{eq14}
\dfrac{4n-10}{t_1}\dfrac{\partial \Delta(\bm{p})}{\partial t_0}A_1(\bm{p})-\dfrac{4n-7}{t_1}\Delta(\bm{p})\dfrac{\partial A_1(\bm{p})}{\partial t_0}=0
\end{equation}
\begin{equation}\label{eq15}
\dfrac{4n-10}{t_1}\dfrac{\partial \Delta(\bm{p})}{\partial t_0}A_2(\bm{p})-\dfrac{4n-7}{t_1}\Delta(\bm{p})\dfrac{\partial A_2(p)}{\partial t_0}=0
\end{equation}
\begin{equation}\label{eq16}
\dfrac{4n-9}{t_1}\dfrac{\partial \Delta(\bm{p})}{\partial t_0}A_3(\bm{p})-\dfrac{4n-7}{t_1}\Delta(\bm{p})\dfrac{\partial A_3(\bm{p})}{\partial t_0}=0
\end{equation}
Using the fact that $\Delta(\bm{p})$ does not depend on $t_0$ and the equations \eqref{eq14}, \eqref{eq15}, \eqref{eq16}, we deduce that $A_i(\bm{p})$ for $i\in\{1,2,3\}$ do not depend on $t_0$. But this contradicts Lemma \ref{lem15}, and therefore the $I_i$ are algebraically independent.
\end{proof}

And we can finally prove Lemma \ref{denomnot}. 

\begin{proof} (of Lemma \ref{denomnot}) Assume that $I_3(\bm{p})$ and $I_4(\bm{p})$ are not identically zero. Then because of Theorem \ref{teo17}, the expression $8(n-3)I_3(\bm{p})+3(n-2)I_4^2(\bm{p})$ in the denominator of $\kappa_1,\kappa_2$ cannot be identically zero. So let us assume that both $I_3(\bm{p})$ and $I_4(\bm{p})$ are identically zero. Then, by the definition of the {$I_i$,} $A_3(\bm{p})$ and $A_4(\bm{p})$ are both identically zero. Since $A_3(\bm{p})=\Vert \bm{p}_{t_0}\, \bm{p}_{t_1}\, \bm{p}_{t_0^4}\, \bm{p}_{t_0^3} \Vert \equiv 0$ and $A_4(\bm{p})=\Vert \bm{p}_{t_0}\, \bm{p}_{t_1}\, \bm{p}_{t_0^2}\, \bm{p}_{t_0^4} \Vert \equiv 0$, we conclude that both of $\bm{p}_{t_0^2}$ and $\bm{p}_{t_0^3}$ are linear combinations of $\bm{p}_{t_0},\bm{p}_{t_0^2},\bm{p}_{t_0^4}$. Substituting these linear combinations in $A_1(\bm{p})$ and $A_2(\bm{p})$ yields that both $A_1(\bm{p})$ and $A_2(\bm{p})$ are identically zero too. But this again contradicts Theorem \ref{teo17}.
\end{proof}   
\end{document}